\documentclass{amsart}
\usepackage{amsmath,amsthm}
\usepackage{amsfonts,amssymb}

\usepackage{enumerate}

\hfuzz1pc


\newtheorem{thm}{Theorem}[section]
\newtheorem{cor}[thm]{Corollary}
\newtheorem{lem}[thm]{Lemma}
\newtheorem{prop}[thm]{Proposition}

\newtheorem{defn}[thm]{Definition}

\theoremstyle{remark}


\def\sph{\mathbb{S}^{d-1}}
\def\ball{\mathbb{B}^{d}}
\def\f{\frac}

 \def\a{{\alpha}} 
 \def\b{{\beta}}

 \def\t{{\theta}}
 \def\l{{\lambda}}
 
 \def\o{{\omega}}
 \def\s{{\sigma}}
 \def\la{{\langle}}
 \def\ra{{\rangle}}

 \def\d{{\textnormal{d}}}

 \def\CH{{\mathcal H}}

 \def\P{{\mathcal P}}

 \def\U{{\mathcal U}}

 \def\NN{{\mathbb N}}

 \def\RR{{\mathbb R}}
 \def\SS{{\mathbb S}}
 
 \def\YY{{\mathbb Y}}
 
        \def\sspan{\operatorname{span}}
        
        \def\dim{\operatorname{dim}}

        \def\proj{\operatorname{proj}}

\def\({\Bigl(}
\def \){ \Bigr)}

\begin{document}

\title[ ]
{Spherical Harmonics}
\author{Feng Dai}
\address{Department of Mathematical and Statistical Sciences\\
University of Alberta\\, Edmonton, Alberta T6G 2G1, Canada.}
\email{dfeng@math.ualberta.ca}
\author{Yuan Xu}
\address{Department of Mathematics\\ University of Oregon\\
    Eugene, Oregon 97403-1222.}\email{yuan@math.uoregon.edu}
\thanks{The work of the first author was  supported  in part by NSERC  Canada under
grant RGPIN 311678-2010. The work of the second author was supported in part by
 NSF Grant DMS-1106113.}

\date{\today}
\keywords{spherical harmonics, unit sphere, zonal harmonics, Laplace--Beltrami operator}
\subjclass[2000]{42B10, 42C10}

\begin{abstract}
This is Chapter 1 of the book {\it Approximation Theory and Harmonic Analysis on Spheres and Balls} by 
the authors. It provides a self-contained introduction to spherical harmonics. The book will be published 
as a title in {\it Springer Monographs in Mathematics} by Springer in 2013. The table of contents of the 
book is attached at the end of this file. 
\end{abstract}

\maketitle

In this chapter we introduce spherical harmonics and study their properties.
Most of the material of this chapter, except the last section, is classical. We strive
for a succinct account of the theory of spherical harmonics. After a standard
treatment of the space of spherical harmonics and orthogonal bases in the first
section, the orthogonal projection operator and reproducing kernels, also known as
zonal harmonics, are developed in greater details in  the second section, because of their
central role in harmonic analysis and approximation theory. As an application of
the addition formula, it is shown in the third section that there exist bases of
spherical harmonics consisting of entirely zonal harmonics. The Laplace--Beltrami
operator is discussed in the fourth section, where an elementary and self-contained
approach is adopted. Spherical coordinates and an explicit orthonormal basis
of spherical harmonics in these coordinates are presented the fifth section. These
formulas in two and three variables are collected in the sixth section for easy
reference, since they are most often used in applications. The connection to group
representation is treated briefly in the seventh section. The last section deals with
derivatives and integrals on the sphere. With the introduction of angular derivatives that are
first-order differential operators acting on the large circles of intersections of the
sphere and the coordinate planes, it is shown that the Laplace--Beltrami operator
can be decomposed into second-order angular derivatives. These derivative
operators will play an important role in approximation theory on the sphere.
They are used to derive several integral formulas on the sphere.

\section{Space of spherical harmonics and orthogonal bases} \label{sec:1.1}
\setcounter{equation}{0}

We begin by introducing some notation that will be used throughout this
book. For $x \in\RR^d$, we write $x =(x_1,\ldots, x_d)$. The inner product
of $x, y \in \RR^d$ is denoted by $\la x,y\ra : = \sum_{i=1}^d x_i y_i$ and
the norm of $x$ is denoted by $\|x\|:=\sqrt{\la x,x\ra}$. Let $\NN_0$ denote
the set of nonnegative integers. For $\a =(\a_1,\ldots,\a_d) \in \NN_0^d$,
a monomial $x^\a$ is a product $x^\alpha = x_1^{\alpha_1} \ldots x_d^{\alpha_d}$,
which has degree $|\alpha|=\a_1+\ldots+\a_d$.

A homogeneous polynomial $P$ of degree $n$ is a linear combination of
monomials of degree $n$, that is, $P(x) = \sum_{|\a| =n} c_\a x^\a$,
where $c_\a$ are either real or complex numbers. A polynomial
of (total) degree at most $n$ is of the form $P(x) = \sum_{|\a|\le n} c_\alpha x^\alpha$.
Let $\P_n^d$ denote the space of real  homogeneous polynomials of degree
$n$ and let $\Pi_n^d$ denote the space of real polynomials of degree at most $n$.
Counting the cardinalities  of $\{\a \in \NN_0^d: |\a| =n\}$ and $\{\a \in
\NN_0^d: |\a|  \le  n\}$ shows that 
\begin{equation*}
  \dim \P_n^d = \binom{n+d-1}{n} \quad\hbox{and} \quad \dim \Pi_n^d = \binom{n+d}{n}.
\end{equation*}

Let $\partial_i$ denote the partial derivative in the $i$-th variable
and $\Delta$ the Laplacian operator 
$$
       \Delta := \partial_1^2 + \cdots + \partial_d^2.
$$

\begin{defn} \label{defn:CHnd} 
For $n=0,1,2,\ldots $ let $\CH_{n}^{d}$ be the linear space of real
harmonic polynomials, homogeneous of degree $n$, on $\RR^d$, that
is,  
$$
   \CH_{n}^{d} :=\left\{ P\in \P_{n}^{d}:\Delta P=0\right\}.
$$
\end{defn}

Spherical harmonics are the restrictions of elements in $\CH_n^d$ to
the unit sphere. If $Y \in \CH_n^d$, then $Y(x) = \|x\|^n Y(x')$
where $x=\|x\|x'$ and $x ' \in \SS^{d-1}$. Strictly speaking, one
should make a distinction between $\CH_n^d$ and its restriction to
the sphere. We will, however, also call $\CH_n^d$ the space of
spherical harmonics. When it is necessary to emphasize the restriction
to the sphere, we shall use the notation $\CH_n^d\vert_{\sph}$.
In the same vein, we shall define $\P_n(\sph) := \P_n^d \vert_{\sph}$ and
$\Pi_n(\sph) := \Pi_n^d \vert_{\sph}$.  

Spherical harmonics of different degrees are orthogonal with respect to
\begin{equation}\label{ipd-sphere}
 \la f , g \ra_{\sph} := \frac{1}{\o_d} \int_{\sph} f (x) g(x)d\s(x)
\end{equation}
where $d\s$ is the surface area measure and $\o_d$ denotes the surface area of $\sph$,
\begin{equation}\label{surface-area}
 \o_d: = \int_{\sph} d\s = \frac{2 \pi^{d/2}} {\Gamma(d/2)}.
\end{equation}

\begin{thm} \label{thm:Hm-orthogonal}
If $Y_n \in \CH_n^d$, $Y_m \in \CH_m^d$, and $n \ne m$, then  
$
   \la Y_n , Y_m \ra_{\sph} = 0.
$
\end{thm}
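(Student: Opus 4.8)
The plan is to derive the orthogonality from Green's second identity on the unit ball $\ball$. First I would recall that for functions $u,v$ that are $C^2$ on a neighborhood of $\overline{\ball}$,
$$
\int_{\ball} \bigl( u\,\Delta v - v\,\Delta u\bigr)\,dx = \int_{\sph} \Bigl( u\,\f{\partial v}{\partial \nb} - v\,\f{\partial u}{\partial \nb}\Bigr)\,d\s ,
$$
where $\nb$ is the outward unit normal to $\sph$. Applying this with $u=Y_m$ and $v=Y_n$, the integrand on the left vanishes identically since $\Delta Y_n = \Delta Y_m = 0$ by Definition~\ref{defn:CHnd}; hence the boundary integral on the right must be zero.

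Next I would identify the normal derivatives appearing in that boundary integral. Because $Y_n$ is homogeneous of degree $n$, Euler's relation for homogeneous functions gives $\la x,\nabla Y_n(x)\ra = n\,Y_n(x)$ for every $x$; and at a point $x\in\sph$ the outward unit normal is $x$ itself, so $\f{\partial Y_n}{\partial \nb}(x) = \la x,\nabla Y_n(x)\ra = n\,Y_n(x)$ on $\sph$, and likewise $\f{\partial Y_m}{\partial\nb} = m\,Y_m$ on $\sph$. Substituting these into the boundary integral, Green's identity collapses to
$$
0 = \int_{\sph} \bigl( m\,Y_m(x)Y_n(x) - n\,Y_n(x)Y_m(x)\bigr)\,d\s(x) = (m-n)\int_{\sph} Y_n(x)Y_m(x)\,d\s(x).
$$
Since $n\ne m$, dividing by $m-n$ and then by $\o_d$ yields $\la Y_n,Y_m\ra_{\sph}=0$, which is the assertion.

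The one step that deserves a word of justification is the use of Green's identity up to the boundary, but this is routine here: $Y_n$ and $Y_m$ are polynomials, hence $C^\infty$ on all of $\RR^d$, and $\ball$ is a bounded domain with smooth boundary, so the identity is just the divergence theorem applied to the vector field $Y_m\nabla Y_n - Y_n\nabla Y_m$. Equivalently, one could bypass Green's identity and compute $\int_{\ball}\operatorname{div}\bigl(Y_m\nabla Y_n - Y_n\nabla Y_m\bigr)\,dx$ directly, using $\operatorname{div}(w\,\nabla v)=\la\nabla w,\nabla v\ra + w\,\Delta v$ and the harmonicity of both functions; the two computations amount to the same thing. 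I do not expect any genuine obstacle: the proof is short, and its only real content is the normal-derivative evaluation via Euler's relation, which is elementary.
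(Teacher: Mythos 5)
Your argument is correct and is essentially identical to the paper's proof: both apply Green's second identity on the unit ball and evaluate the normal derivative on $\sph$ via Euler's relation for homogeneous polynomials, yielding $(n-m)\int_{\sph} Y_n Y_m\, d\s = 0$. No changes are needed.
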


\begin{proof}
Let $\frac{\partial}{\partial r}$ denote the normal derivative.
Since $Y_n$ is homogeneous, $Y_n(x) = r^n Y_n(x')$, where $x = r x'$ and
$x' \in \sph$, so that $\frac{\partial Y_n}{\partial r}(x') = n Y_n(x')$ for $x' \in \sph$
and $n \ge 0$. By Green's identity,
\begin{align*}
  (n - m)  \int_{\sph} Y_n Y_m d\s  & =   \int_{\sph} \( Y_m \frac{\partial Y_n}{\partial r}-
       Y_n \frac{\partial Y_m}{\partial r} \) d\s  \\
       & =   \int_{\ball} \(Y_m \Delta Y_n - Y_n \Delta Y_m \) dx =0,
\end{align*}
since $\Delta Y_n =0$ and $\Delta Y_m =0$.
\end{proof}

\begin{thm} \label{thm:Pn_decomp}
For $n =0,1,2,\ldots$, there is a decomposition of $\P_n^d$,
\begin{equation} \label{eq:Pn_decom1}
   \P_n^d = \bigoplus_{0 \le j \le n/2} \|x\|^{2j} \CH_{n-2j}^d.
\end{equation}
In other words, for each $P\in \P_n^d$, there is a unique decomposition
\begin{equation} \label{eq:Pn_decom2}
 P(x) = \sum_{0 \le j \le n/2} \|x\|^{2j} P_{n-2j}(x) \quad\hbox{with}\quad P_{n-2j} \in \CH_{n-2j}^d.
\end{equation}
\end{thm}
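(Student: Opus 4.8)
The plan is to prove the decomposition \eqref{eq:Pn_decom1} by an induction on $n$, where the inductive step rests on the single algebraic fact that the map $\Delta : \P_n^d \to \P_{n-2}^d$ is surjective. Granting that surjectivity, the rank--nullity theorem gives
\[
\dim \ker(\Delta|_{\P_n^d}) = \dim \P_n^d - \dim \P_{n-2}^d = \binom{n+d-1}{n} - \binom{n+d-3}{n-2},
\]
and $\ker(\Delta|_{\P_n^d})$ is by definition $\CH_n^d$. On the other hand, multiplication by $\|x\|^2$ is clearly an injection $\P_{n-2}^d \hookrightarrow \P_n^d$, so applying the inductive hypothesis to $\P_{n-2}^d$ yields $\|x\|^2 \P_{n-2}^d = \bigoplus_{1 \le j \le n/2} \|x\|^{2j}\CH_{n-2j}^d$, a subspace of $\P_n^d$ of dimension $\dim \P_{n-2}^d$. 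It then remains to show $\P_n^d = \CH_n^d \oplus \|x\|^2\P_{n-2}^d$, and since the two summands have dimensions adding up to $\dim\P_n^d$, it suffices to check that their intersection is trivial: if $Y = \|x\|^2 Q$ with $Y \in \CH_n^d$ and $Q \in \P_{n-2}^d$, then pairing against the inner product $\la\cdot,\cdot\ra_{\sph}$ on the sphere gives $\la Y, Y\ra_{\sph} = \la Y, \|\cdot\|^2 Q\ra_{\sph} = \la Y, Q\ra_{\sph}$, but $Y \in \CH_n^d$ is orthogonal to $\P_{n-2}^d = \bigoplus_{0\le j\le (n-2)/2}\|x\|^{2j}\CH_{n-2-2j}^d$ by \thmref{thm:Hm-orthogonal} (every piece of $Q$ restricted to the sphere is a spherical harmonic of degree $< n$), forcing $\la Y,Y\ra_{\sph}=0$ and hence $Y \equiv 0$ on $\sph$, so $Y \equiv 0$ by homogeneity.

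The main obstacle is therefore establishing that $\Delta : \P_n^d \to \P_{n-2}^d$ is onto. The cleanest route I would take is a dimension/duality argument on the polynomial ring: introduce the bilinear pairing on $\P_n^d$ defined by $[P,Q] := P(\partial)Q$ (a nonzero scalar, since $P$ and $Q$ are both homogeneous of degree $n$), where $P(\partial)$ means substitute $\partial_i$ for $x_i$. One checks this pairing is symmetric and nondegenerate, with the monomials $\{x^\alpha\}$ essentially self-dual up to the factor $\alpha!$. Under this pairing, multiplication by $\|x\|^2$ and application of $\Delta$ are adjoint operators: $[\,\|x\|^2 P, Q\,] = [\,P, \Delta Q\,]$ for $P \in \P_{n-2}^d$, $Q \in \P_n^d$, because $\|x\|^2$ corresponds to the operator $\Delta$ under $P \mapsto P(\partial)$. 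Since multiplication by $\|x\|^2$ is injective, its adjoint $\Delta$ is surjective by nondegeneracy of the pairing. An alternative, more hands-on route is to produce for each monomial $x^\beta$ of degree $n-2$ an explicit preimage of the form $c\,\|x\|^2 x^\beta + (\text{lower-order-in-}\|x\|^2\text{ corrections})$ and solve the resulting triangular linear system, but the adjointness argument is shorter and more transparent.

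Once surjectivity is in hand, I would organize the final write-up as follows. First, dispose of the base cases $n=0$ and $n=1$, where $\P_n^d = \CH_n^d$ trivially (all polynomials of degree $\le 1$ are harmonic and $\|x\|^{2j}$ with $j\ge 1$ exceeds the degree). Second, state and prove the surjectivity of $\Delta$ via the adjointness lemma above, deducing $\dim \CH_n^d = \dim\P_n^d - \dim\P_{n-2}^d$. Third, prove the direct-sum decomposition $\P_n^d = \CH_n^d \oplus \|x\|^2\P_{n-2}^d$ using the trivial-intersection argument (invoking \thmref{thm:Hm-orthogonal}) together with the dimension count. Fourth, apply the inductive hypothesis to $\P_{n-2}^d$ and substitute, which telescopes into \eqref{eq:Pn_decom1}; uniqueness in \eqref{eq:Pn_decom2} is then immediate from the directness of the sum. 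The only subtlety to flag is that the orthogonality of $Y\in\CH_n^d$ to $\|x\|^{2j}\CH_{n-2-2j}^d$ on the sphere follows because $\|x\|^{2j}$ is identically $1$ on $\sph$, so this reduces to orthogonality of $\CH_n^d$ and $\CH_{n-2-2j}^d$, which \thmref{thm:Hm-orthogonal} supplies since $n - 2 - 2j < n$.
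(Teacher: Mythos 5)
Your proof is correct, and its skeleton---induction on $n$, reduction to $\P_n^d = \CH_n^d \oplus \|x\|^2\P_{n-2}^d$, and trivial intersection obtained by applying the inductive hypothesis to $\P_{n-2}^d$ and invoking the sphere orthogonality of \thmref{thm:Hm-orthogonal}---is the same as the paper's. Where you genuinely diverge is in how the dimension count is closed. You prove outright that $\Delta:\P_n^d\to\P_{n-2}^d$ is surjective, via the apolar pairing $[P,Q]=P(\partial)Q$ and the adjointness of $\Delta$ with multiplication by $\|x\|^2$, so that rank--nullity gives $\dim\CH_n^d=\dim\P_n^d-\dim\P_{n-2}^d$ exactly, and the direct sum then follows from trivial intersection plus matching dimensions. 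The paper never establishes surjectivity at this stage: it uses only the soft inequality $\dim\CH_n^d\ge\dim\P_n^d-\dim\P_{n-2}^d$ (from $\Delta\P_n^d\subset\P_{n-2}^d$ and rank--nullity) together with the reverse inequality $\dim\CH_n^d+\dim\P_{n-2}^d\le\dim\P_n^d$ coming from the directness of the sum; equality is forced, and both the surjectivity of $\Delta$ and the dimension formula \eqref{eq:dimHnd} fall out as corollaries rather than serving as inputs. Your route costs an extra lemma, but that lemma is built on exactly the inner product $\la\cdot,\cdot\ra_\partial$ that the paper introduces anyway in \thmref{thm:inner_partial}, so nothing is wasted and the surjectivity of the Laplacian is a reusable fact; the paper's sandwich argument is the more economical choice at this point in the development, since it needs no machinery beyond \thmref{thm:Hm-orthogonal}. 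All the individual steps you give (nondegeneracy of the pairing, the adjointness identity $[\|x\|^2P,Q]=[P,\Delta Q]$, injectivity of multiplication by $\|x\|^2$, and the vanishing of a homogeneous polynomial that vanishes on $\sph$) are sound.
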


\begin{proof}
The proof uses induction. Evidently $\P_0^d = \CH_0^d$ and $\P_1^d
= \CH_1^d$. Since $\Delta \P_n^d \subset \P_{n-2}^d$, $\dim \CH_n^d
\ge \dim \P_n^d - \dim \P_{n-2}^d$. Suppose the statement holds
for $m= 0,1,\ldots, n-1$. Then $\|x\|^2 \P_{n-2}^d$ is a subspace
of $\P_{n}^d$ and it is isomorphic to $\P_{n-2}^d$. By the induction
hypothesis, $\|x\|^2 \P_{n-2}^d =  \bigoplus_{0 \le j \le n/2 -1}
\|x\|^{2j+2} \CH_{n-2-2j}^d$. Hence, by the previous theorem,
$\CH_n^d$ is orthogonal to $\|x\|^2 \P_{n-2}^d$, so that $\dim
\CH_n^d + \dim \P_{n-2}^d \le  \dim \P_{n}^d$. Consequently,
$\P_n^d = \CH_n^d \oplus \|x\|^2 \P_{n-2}^d$.
\end{proof}

\begin{cor}
For $n =0,1,2 , \ldots$, 
\begin{equation} \label{eq:dimHnd}
\dim \CH_{n}^{d}=\dim \P_{n}^{d}-\dim \P_{n-2}^{d}=
\binom{n+d-1}{n}-\binom{n+d-3}{n-2},
\end{equation}
where it is agreed that $ \dim \P_{n-2}^{d}=0$ for $n=0,1$.
\end{cor}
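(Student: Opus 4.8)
The plan is to read the dimension straight off the decomposition established in \thmref{thm:Pn_decomp}. The key point is that the proof of that theorem actually produces the single-step splitting $\P_n^d = \CH_n^d \oplus \|x\|^2 \P_{n-2}^d$ for $n \ge 2$, and this is all that is needed. First I would observe that the map $Q \mapsto \|x\|^2 Q$ is a linear injection of $\P_{n-2}^d$ into $\P_n^d$ — injective because the polynomial ring over $\RR$ is an integral domain and $\|x\|^2 \ne 0$ — so the subspace $\|x\|^2 \P_{n-2}^d$ has dimension exactly $\dim \P_{n-2}^d$. Taking dimensions in the direct sum then yields $\dim \CH_n^d = \dim \P_n^d - \dim \P_{n-2}^d$ for every $n \ge 2$.

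Next I would handle the base cases $n = 0$ and $n = 1$. There the decomposition degenerates to $\P_0^d = \CH_0^d$ and $\P_1^d = \CH_1^d$, since constants and linear forms are automatically harmonic; hence $\dim \CH_n^d = \dim \P_n^d$, which agrees with the stated identity once one adopts the convention $\dim \P_{n-2}^d = 0$ for $n = 0, 1$. Finally I would substitute the closed form $\dim \P_m^d = \binom{m+d-1}{m}$ recorded earlier in this section to obtain
$$
\dim \CH_n^d = \binom{n+d-1}{n} - \binom{n+d-3}{n-2},
$$
with the second binomial coefficient read as $0$ when $n \le 1$.

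Since each step is a routine dimension count, I do not expect any real obstacle. The only point deserving a word of care is the injectivity of multiplication by $\|x\|^2$, which is what guarantees that the second summand contributes its full dimension $\dim \P_{n-2}^d$ rather than something smaller; everything else is bookkeeping and the substitution of the binomial formula.
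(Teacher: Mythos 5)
Your argument is correct and follows exactly the route the paper intends: the corollary is stated as an immediate consequence of Theorem \ref{thm:Pn_decomp}, whose proof already yields the splitting $\P_n^d = \CH_n^d \oplus \|x\|^2 \P_{n-2}^d$, and the dimension count then follows just as you describe. Your explicit remark that multiplication by $\|x\|^2$ is injective (so the second summand has full dimension $\dim \P_{n-2}^d$) is a welcome bit of care that the paper leaves tacit, but it does not change the approach.
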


\begin{cor} \label{cor:dimPi-sph}  
For $n \in \NN$, $\Pi_n(\sph) = \P_n(\sph) \oplus \P_{n-1}(\sph)$ and
\begin{equation} \label{eq:dimPi-sph}
   \dim \Pi_n(\sph) = \dim \P_{n}^{d}+\dim \P_{n-1}^{d} = \binom{n+d-1}{n} + \binom{n+d-2}{n-1}.
\end{equation}
\end{cor}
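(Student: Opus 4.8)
The plan is to prove both assertions of Corollary~\ref{cor:dimPi-sph} by restricting the homogeneous decomposition of $\Pi_n^d$ to the sphere and then applying Theorem~\ref{thm:Pn_decomp}. The starting observation is purely algebraic: as a space of polynomials on $\RR^d$ we have $\Pi_n^d = \P_n^d \oplus \P_{n-1}^d \oplus \cdots \oplus \P_0^d$, since a polynomial of total degree at most $n$ is uniquely the sum of its homogeneous components. Restricting to $\sph$, where $\|x\|=1$, the factor $\|x\|^{2j}$ in the decomposition \eqref{eq:Pn_decom1} becomes trivial; hence by Theorem~\ref{thm:Pn_decomp}, $\P_m(\sph) = \sum_{0 \le j \le m/2} \CH_{m-2j}^d\vert_{\sph}$, and running $m$ down from $n$ one sees that $\P_n(\sph) + \P_{n-1}(\sph)$ already contains every $\CH_k^d\vert_{\sph}$ for $0 \le k \le n$, so it equals $\Pi_n(\sph)$.

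First I would establish that the sum $\Pi_n(\sph) = \P_n(\sph) + \P_{n-1}(\sph)$ is in fact direct. For this I would show $\P_n(\sph) \cap \P_{n-1}(\sph) = \{0\}$ by the standard parity/degree argument: if $p \in \P_n^d$ and $q \in \P_{n-1}^d$ agree on $\sph$, then $p(x) = \|x\|\, q(x/\|x\|)\cdot\|x\|^{n-1}$... more cleanly, $p(x) - \|x\|^{?}q(x)$ is not homogeneous, so one instead argues: $p(x)=q(x)$ for all $x\in\sph$ forces $p(x)=\|x\|^{n}\,p(x/\|x\|)=\|x\|^{n}\,q(x/\|x\|)$, while $q(x)=\|x\|^{n-1}q(x/\|x\|)$; these can only coincide as functions on $\sph$, and comparing the even and odd parts (in $x \mapsto -x$) shows $p$ and $q$ restrict to $0$. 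Granting $\P_n^d\vert_{\sph}$ restricts faithfully off the radial factors, the cleanest route is to invoke \eqref{eq:Pn_decom1}: the spaces $\CH_k^d\vert_{\sph}$ for $k \le n$ of the same parity as $n$ span $\P_n(\sph)$, those of parity $n-1$ span $\P_{n-1}(\sph)$, and Theorem~\ref{thm:Hm-orthogonal} shows all the $\CH_k^d\vert_{\sph}$ are mutually orthogonal in $\la\cdot,\cdot\ra_{\sph}$, hence linearly independent; this simultaneously gives directness of the sum and lets me read off dimensions.

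From directness, $\dim \Pi_n(\sph) = \dim \P_n(\sph) + \dim \P_{n-1}(\sph)$. It then remains to check $\dim \P_m(\sph) = \dim \P_m^d$, i.e.\ that restriction to the sphere is injective on $\P_m^d$; but this is immediate since a homogeneous polynomial vanishing on $\sph$ vanishes identically (it vanishes on every ray through the origin). Combining with $\dim\P_m^d = \binom{m+d-1}{m}$ from the introductory count yields \eqref{eq:dimPi-sph} with the two binomial coefficients $\binom{n+d-1}{n}$ and $\binom{n+d-2}{n-1}$.

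I expect the only genuine subtlety to be the direct-sum claim $\P_n(\sph) \cap \P_{n-1}(\sph) = \{0\}$: unlike on $\RR^d$, homogeneity is invisible on the sphere, so one must exploit either the parity splitting under $x \mapsto -x$ (a degree-$n$ and a degree-$(n-1)$ polynomial have opposite parities, so their restrictions can agree only if both are zero) or, more slickly, the orthogonality already furnished by Theorem~\ref{thm:Hm-orthogonal}. Everything else is bookkeeping with the decomposition \eqref{eq:Pn_decom1} and the dimension formula for $\P_n^d$.
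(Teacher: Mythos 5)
Your proposal is correct and follows essentially the same route as the paper: both decompose $\Pi_n(\sph)$ into the mutually orthogonal pieces $\CH_k^d\vert_{\sph}$, $0\le k\le n$, via Theorem~\ref{thm:Pn_decomp} restricted to the sphere, with orthogonality (Theorem~\ref{thm:Hm-orthogonal}) guaranteeing directness. The only cosmetic difference is in the dimension bookkeeping --- the paper telescopes $\sum_{k=0}^n(\dim\P_k^d-\dim\P_{k-2}^d)$ while you invoke injectivity of the restriction map on each $\P_m^d$ --- and you usefully make explicit the directness and injectivity points that the paper leaves implicit.
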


\begin{proof}
By Theorem \ref{thm:Pn_decomp}, $\Pi_n(\sph)$ can be written as a direct sum
of $\CH_k^d$ for $0 \le k \le n$, which gives the stated decomposition
by \eqref{eq:Pn_decom1}. Moreover,
$$
  \dim \Pi_n(\sph) = \sum_{k=0}^n \dim \CH_k^d = \sum_{k=0}^n (\dim \P_k^d - \dim \P_{k-2}^d)
$$
by \eqref{eq:dimHnd}, which simplifies to \eqref{eq:dimPi-sph}.
\end{proof}

The orthogonality and homogeneity define spherical harmonics.

\begin{prop}
If $P$ is a homogeneous polynomial of degree $n$ and $P$ is orthogonal to
all polynomials of degree less than $n$ with respect to $\la \cdot, \cdot \ra_{\sph}$,
then $P \in \CH_n^d$.
\end{prop}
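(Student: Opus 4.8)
The plan is to reduce the claim to the canonical decomposition of $\P_n^d$ provided by Theorem~\ref{thm:Pn_decomp} together with the orthogonality of harmonics of different degrees from Theorem~\ref{thm:Hm-orthogonal}. By \eqref{eq:Pn_decom2} we may write, uniquely,
\begin{equation*}
  P(x) = \sum_{0 \le j \le n/2} \|x\|^{2j} P_{n-2j}(x), \qquad P_{n-2j} \in \CH_{n-2j}^d .
\end{equation*}
Since $\|x\|=1$ on $\sph$, the restriction of $P$ to the sphere equals $\sum_{0\le j\le n/2} P_{n-2j}\big|_{\sph}$, and hence for every polynomial $Q$ we have $\la P, Q\ra_{\sph} = \sum_{0\le j\le n/2} \la P_{n-2j}, Q\ra_{\sph}$. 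The goal is then to show that all the components $P_{n-2j}$ with $j\ge 1$ are zero.

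Next I would test $P$ against each lower-order component. Fix $j_0$ with $1 \le j_0 \le n/2$. On one hand, $P_{n-2j_0} \in \P_{n-2j_0}^d$ is a polynomial of degree $n-2j_0 \le n-2 < n$, so the hypothesis gives $\la P, P_{n-2j_0}\ra_{\sph} = 0$. On the other hand, expanding by the identity just noted and using Theorem~\ref{thm:Hm-orthogonal} to annihilate every cross term $\la P_{n-2j}, P_{n-2j_0}\ra_{\sph}$ with $j \ne j_0$ (the case $j=0$ included, as $n \ne n-2j_0$), we are left with $\la P, P_{n-2j_0}\ra_{\sph} = \la P_{n-2j_0}, P_{n-2j_0}\ra_{\sph}$. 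Comparing the two evaluations yields $\la P_{n-2j_0}, P_{n-2j_0}\ra_{\sph} = 0$, so $P_{n-2j_0}$ vanishes on $\sph$; being homogeneous, it vanishes identically on $\RR^d$.

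Letting $j_0$ range over $1,\dots,\lfloor n/2\rfloor$ kills every term of the decomposition except $P_n$, whence $P = P_n \in \CH_n^d$. There is no real obstacle here: the only point needing a word of care is the implication ``$P_{n-2j_0}=0$ on $\sph$'' $\Rightarrow$ ``$P_{n-2j_0}\equiv 0$'', which is immediate from homogeneity since $P_{n-2j_0}(x)=\|x\|^{n-2j_0}P_{n-2j_0}(x')$. (For $n=0,1$ the statement is vacuous, since then $\P_n^d=\CH_n^d$ and the sum has only the $j=0$ term.) All the substantive work has already been carried out in Theorems~\ref{thm:Pn_decomp} and~\ref{thm:Hm-orthogonal}.
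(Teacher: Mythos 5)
Your proof is correct and follows exactly the paper's route: decompose $P$ via Theorem~\ref{thm:Pn_decomp} and use the orthogonality hypothesis, together with Theorem~\ref{thm:Hm-orthogonal}, to kill the terms $\|x\|^{2j}P_{n-2j}$ with $j\ge 1$. The paper's proof is just a two-line compression of the same argument; your version merely spells out the pairing against each $P_{n-2j_0}$ and the homogeneity step.
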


\begin{proof}
Since $P \in \P_n^d$, $P$ can be expressed as in \eqref{eq:Pn_decom2}. The orthogonality
then shows that $P = P_n \in \CH_n^d$.
\end{proof}

Let $O(d)$ denote the orthogonal group, the group of $d \times d$ orthogonal matrices, and
let $SO(d) = \{g \in O(d) : \det g = 1\}$ be the special orthogonal group. A rotation in $\RR^d$
 is determined by an element in $SO(d)$.  
 
\begin{thm} \label{cor:O(d)-basis}
The space $\CH_n^d$ is invariant under the action $f (x) \mapsto f (Qx)$, $Q \in O(d)$.
Moreover, if $\{Y_\a\}$ is an orthonormal basis of $\CH_n^d$, then so is $\{Y_\a(Q \{\cdot \})\}$.
\end{thm}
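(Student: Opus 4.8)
The plan is to isolate the one substantive fact — that the Laplacian is $O(d)$-equivariant — and then deduce both assertions by elementary bookkeeping. Throughout, write $f_Q(x) := f(Qx)$ for $Q \in O(d)$. First I would observe that $f \mapsto f_Q$ preserves $\P_n^d$: each coordinate of $Qx$ is a linear form in $x$, so $f_Q$ is again a polynomial, and $f_Q(tx) = f(tQx) = t^n f(Qx)$ shows it is homogeneous of degree $n$. Since $Q^{-1} = Q^{\tran} \in O(d)$, the map $f \mapsto f_Q$ is a linear bijection of $\P_n^d$ onto itself, with inverse $f \mapsto f_{Q^{\tran}}$.

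The key step is the identity $\Delta f_Q = (\Delta f)_Q$. By the chain rule, $\partial_i f_Q(x) = \sum_k (\partial_k f)(Qx)\, Q_{ki}$, hence $\partial_i^2 f_Q(x) = \sum_{k,\ell} (\partial_k \partial_\ell f)(Qx)\, Q_{ki} Q_{\ell i}$. Summing over $i$ and using the orthogonality relation $\sum_i Q_{ki} Q_{\ell i} = (Q Q^{\tran})_{k\ell} = \delta_{k\ell}$ collapses the double sum to $\Delta f_Q(x) = \sum_k (\partial_k^2 f)(Qx) = (\Delta f)(Qx)$. In particular $\Delta f = 0$ forces $\Delta f_Q = 0$, so the bijection $f \mapsto f_Q$ of $\P_n^d$ restricts to a bijection of $\CH_n^d = \{P \in \P_n^d : \Delta P = 0\}$ onto itself, which is exactly the claimed invariance.

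For the statement about orthonormal bases I would invoke the rotation invariance of the surface measure: the substitution $y = Qx$ maps $\sph$ onto $\sph$ and preserves $d\s$, so for any $Y_\a, Y_\b \in \CH_n^d$,
\[
  \la (Y_\a)_Q , (Y_\b)_Q \ra_{\sph}
  = \frac{1}{\o_d} \int_{\sph} Y_\a(Qx) Y_\b(Qx)\, d\s(x)
  = \frac{1}{\o_d} \int_{\sph} Y_\a(y) Y_\b(y)\, d\s(y)
  = \la Y_\a, Y_\b \ra_{\sph}.
\]
Thus $f \mapsto f_Q$ is an isometry for $\la \cdot, \cdot\ra_{\sph}$; if $\{Y_\a\}$ is orthonormal, so is $\{(Y_\a)_Q\}$, and since $f \mapsto f_Q$ is a bijection of $\CH_n^d$ it carries a basis to a basis. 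The only non-formal point — the main obstacle — is the equivariance identity $\Delta f_Q = (\Delta f)_Q$; once that computation is in place, everything else is linear algebra together with the standard $O(d)$-invariance of $d\s$.
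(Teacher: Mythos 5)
Your proof is correct and follows essentially the same route as the paper: invariance of $\CH_n^d$ via the $O(d)$-equivariance of $\Delta$ (which the paper dispatches in one line as ``writing $\Delta = \nabla\cdot\nabla$ and changing variables'' and you carry out explicitly with the chain rule and $QQ^{\tran}=I$), and preservation of orthonormality via the rotation invariance of $d\s$. The only difference is that you spell out the equivariance computation and the bijectivity of $f\mapsto f_Q$ in more detail than the paper does.
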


\begin{proof}
Since $\Delta$ is invariant under the rotation group $O(d)$ (writing $\Delta = \nabla \cdot \nabla$
and changing variables), if $Y \in \CH_n^d$ and $Q \in O(d)$ then $Y(Qx) \in \CH_n^d$.
That $\{Y_\a (Qx)\}$ is an orthonormal basis of $\CH_n^d$ whenever $\{Y_\a(x)\}$ is follows
from
$$
   \frac{1}{\o_d} \int_{\sph }Y_\a(Q x) Y_\b(Q x) d\s (x) =  \frac{1}{\o_d} \int_{\sph }Y_\a(x) Y_\b(x) d\s (x)
    = \delta_{\a,\b},
$$
which holds under a change of variables since $d\s$ is invariant under $O(d)$.
\end{proof}

Besides $\la f, g \ra_{\sph}$, another useful inner product can be defined on $\P_n^d$
through the action of differentiation. For $\a \in \NN_0^d$, let $\partial^\a:=\partial_1^{\a_1}\cdots\partial_d^{\a_d}$.
Let $(a)_n := a(a+1) \cdots (a+n-1)$ be the Pochhammer symbol.

\begin{thm} \label{thm:inner_partial}
For $p, q \in \P_n^d$, define a bilinear form
\begin{equation} \label{eq:inner_partial}
\la p, q \ra_\partial := p(\partial)q,
\end{equation}
where $p(\partial)$ is the differential operator defined by replacing $x^\a$ in $p(x)$ by
$\partial^\a$. Then
\begin{enumerate}[\quad 1.]
\item $\la p, q \ra_\partial$ is an inner product on $\P_n^d$;
\item the reproducing kernel of this inner product is $k_n (x, y) := \la x, y \ra^n /n!$;  
that is,
$$
 \la  k_n (x, \cdot), p \ra_\partial = p(x), \qquad \forall p \in
 \P_n^d;
$$
\item for $p\in \P_n^d$ and $q \in \CH_n^d$,
$$
  \la p, q \ra_\partial =  2^n \left(\frac{d}{2}\right)_n \la p,q\ra_{\sph}.
$$
\end{enumerate}
\end{thm}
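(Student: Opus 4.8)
Parts 1 and 2 are routine, and I would dispatch them in the monomial basis. For part 1, bilinearity is built into the definition, and a one-line computation gives $\la x^\a,x^\b\ra_\partial=\partial^\a x^\b=\a!\,\de_{\a,\b}$, where $\a!:=\a_1!\cdots\a_d!$ (since $|\a|=|\b|=n$, a nonzero value forces $\a=\b$); thus the monomials of degree $n$ form an orthogonal set with positive ``squared norms'' $\a!$, so $\la\cdot,\cdot\ra_\partial$ is symmetric and positive definite. For part 2, the multinomial theorem gives $\la x,y\ra^n=\sum_{|\a|=n}\frac{n!}{\a!}x^\a y^\a$, hence $k_n(x,y)=\sum_{|\a|=n}\frac{x^\a y^\a}{\a!}$; applying part 1 to $p(y)=\sum_{|\b|=n}c_\b y^\b$ in the $y$-variable yields $\la k_n(x,\cdot),p\ra_\partial=\sum_{|\a|=n}\frac{x^\a}{\a!}\,(c_\a\a!)=p(x)$.

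For part 3 the plan is induction on $n$. The cases $n=0,1$ I would verify directly; the only input beyond formal manipulation is $\frac1{\o_d}\int_{\sph}x_ix_j\,d\s=\de_{i,j}/d$, which follows from $\frac1{\o_d}\int_{\sph}\|x\|^2\,d\s=1$ and the symmetries of $\sph$. For the inductive step, fix $p\in\P_n^d$ and $q\in\CH_n^d$, and write $p(x)=\sum_{i=1}^d x_i\,p^{(i)}(x)$ with $p^{(i)}\in\P_{n-1}^d$ (a splitting that always exists, though not uniquely). Since replacing $x^\a$ by $\partial^\a$ carries multiplication by $x_i$ to composition with $\partial_i$, we obtain $p(\partial)q=\sum_i p^{(i)}(\partial)(\partial_iq)$, and each $\partial_iq$ lies in $\CH_{n-1}^d$ because $\Delta$ commutes with $\partial_i$. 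Applying the inductive hypothesis term by term,
$$
 p(\partial)q=2^{n-1}\Bigl(\tfrac d2\Bigr)_{n-1}\sum_{i=1}^d\la p^{(i)},\partial_iq\ra_{\sph}
 =\frac{2^{n-1}(d/2)_{n-1}}{\o_d}\int_{\sph}\Pb\cdot\nabla q\,d\s,\qquad \Pb:=(p^{(1)},\dots,p^{(d)}).
$$

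To finish I would show $\int_{\sph}\Pb\cdot\nabla q\,d\s=(2n-2+d)\int_{\sph}pq\,d\s$. The divergence theorem on $\ball$ applied to the vector field $q\Pb$ gives $\int_{\ball}\nabla\cdot(q\Pb)\,dx=\int_{\sph}q\,(\Pb\cdot x)\,d\s=\int_{\sph}pq\,d\s$, using that the outward normal on $\sph$ is $x$ and $\Pb\cdot x=p$. On the other hand $\nabla\cdot(q\Pb)=\nabla q\cdot\Pb+q\,(\nabla\cdot\Pb)$ is homogeneous of degree $2n-2$, and the $\ball$-integral of a homogeneous polynomial of degree $m$ equals $(m+d)^{-1}$ times its $\sph$-integral; therefore $\int_{\sph}\Pb\cdot\nabla q\,d\s=(2n-2+d)\int_{\sph}pq\,d\s-\int_{\sph}q\,(\nabla\cdot\Pb)\,d\s$. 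The final integral is $0$ by \thmref{thm:Hm-orthogonal}, since $\nabla\cdot\Pb\in\P_{n-2}^d$ restricts on $\sph$ to a sum of harmonics of degrees $<n$, all orthogonal to $q\in\CH_n^d$. Substituting and using $2(d/2+n-1)=2n-2+d$ gives $p(\partial)q=2^{n-1}(d/2)_{n-1}(2n-2+d)\la p,q\ra_{\sph}=2^n(d/2)_n\la p,q\ra_{\sph}$.

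The main obstacle is the inductive step of part 3. The key idea is the ``peeling'' identity $p(\partial)q=\sum_i p^{(i)}(\partial)(\partial_iq)$: it simultaneously lowers the degree and, through the divergence-theorem computation, forces the factor $2n-2+d$ to appear, which is precisely what builds up the Pochhammer constant. The crucial simplification is that the cross term $\int_{\sph}q\,(\nabla\cdot\Pb)\,d\s$ vanishes by orthogonality of harmonics of different degrees; without it the recursion would not close. One should also observe that the non-uniqueness of the splitting $p=\sum_i x_i p^{(i)}$ causes no trouble, as the identity being proved never refers to the $p^{(i)}$ individually.
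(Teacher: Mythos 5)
Your parts 1 and 2 coincide with the paper's argument (the monomial computation \eqref{eq:inner_partial-2} and the multinomial theorem). For part 3 your proof is correct but follows a genuinely different route. The paper exploits the fact that $p(\partial)q$ is a constant to write it as a Gaussian integral $\frac{1}{(2\pi)^{d/2}}\int_{\RR^d}p(\partial)q\,\e^{-\|x\|^2/2}\,\d x$, moves $p(\partial)$ onto the other factor by repeated integration by parts against the weight (producing $q\cdot(p+s)$ with $s\in\Pi_{n-1}^d$, the remainder dying by orthogonality of $\CH_n^d$), and then extracts the constant $2^n(d/2)_n$ in one stroke from the radial integral $\int_0^\infty r^{2n+d-1}\e^{-r^2/2}\,\d r$ via the Gamma function. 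You instead induct on $n$: the peeling identity $p(\partial)q=\sum_i p^{(i)}(\partial)(\partial_i q)$ with $p=\sum_i x_ip^{(i)}$ reduces the degree, and the divergence theorem on $\ball$ applied to $q\Pb$ (together with the homogeneity relation $\int_{\ball}h=\frac{1}{m+d}\int_{\sph}h$ and the vanishing of $\int_{\sph}q\,(\nabla\cdot\Pb)\,\d\s$ by Theorem~\ref{thm:Hm-orthogonal}) supplies exactly the factor $2n-2+d=2(\frac d2+n-1)$ that builds the Pochhammer symbol one step at a time. Both arguments hinge on the same orthogonality to kill lower-degree cross terms; the paper's is a one-shot computation at the price of introducing the Gaussian and a Gamma-function evaluation, while yours stays entirely within the elementary toolkit already used for Theorem~\ref{thm:Hm-orthogonal} (Green/divergence on the ball) at the price of an induction. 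Your handling of the non-uniqueness of the splitting and of the base cases is sound.
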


\begin{proof}
Let $p, q \in \P_n^d$ be given by $p(x) = \sum_{|\a| =n} a_\a x^\a$ and $q(x) = \sum_{|\a| =n} b_\a x^\a$,
where $a_\a, b_\a \in \RR$. Then,
\begin{equation} \label{eq:inner_partial-2}
     \la p, q \ra_\partial = \sum_{|\a| =n} a_\a \partial^\a \sum_{|\b| =n} b_\b x^\b
         = \sum_{|\a| = n} \a! a_\a b_\a,
\end{equation}
which implies, in particular, that $\la p, p \ra_\partial > 0$ for $p \ne 0$. It follows then
that  $\la \cdot, \cdot \ra_\partial$ is an inner product on $\P_n^d$. By the
multinomial formula, for $q_\a(x) = x^\a$, $|\a| = n$,
$$
 \la k_n (x, \cdot), q_\a \ra_\partial = \frac{1}{n!}  \sum_{|\b|=n} \binom{n}{\b} x^\b \frac{\partial^\b}{\partial y^{\b}} y^\a
     = q_\a(x),
$$
which shows that $k_n (x, y)$ is the reproducing kernel with respect to $\la \cdot,\cdot\ra_\partial$.

We now prove item 3. Integrating by parts shows that
$$
 \int_{\RR^d} \partial_i f (x)g(x)e^{-\|x\|^2/2} dx = - \int_{\RR^d} f (x) (\partial_i g(x) - x_i g(x)) e^{-\|x\|^2/2} d x.
$$
Since $p(\partial)q$ is a constant, using this integration by parts repeatedly shows that 
\begin{align*}
 \la p, q \ra_\partial & = \frac{1}{(2 \pi)^{d/2}} \int_{\RR^d} p(\partial)q(x) e^{-\|x\|^2/2} dx  \\
  & = \frac{1}{(2\pi)^{d/2}} \int_{\RR^d} q(x)(p(x) + s(x))e^{-\|x\|^2/2} dx
\end{align*}
where $s \in \Pi_{n-1}^d$. Since $q \in \CH_n^d$ and $p \in \P_n^d$, switching to polar integral and using
the orthogonality of $\CH_n^d$, we obtain
$$
\la p, q \ra_\partial = \frac{1}{(2\pi)^{d/2} }\int_0^\infty r^{2n+d-1}  e^{-r^2/2} dr
  \int_{\sph} q(x')p(x')d\s(x').
$$
Evaluating the integral in $r$ and simplifying by \eqref{surface-area} concludes the proof.
\end{proof}

A large number of spherical harmonic polynomials can be defined explicitly through differentiation. Let
us denote the standard basis of $\RR^d$ by
$$
 e_1=(1,0,\cdots, 0), e_2= (0,1,0,\ldots, 0) \cdots, e_d=(0,\cdots, 0,1).
$$

\begin{thm}\label{thm:monic}
Let $d >2$. For $\a \in \NN_0^d$, $n = |\a|$, define
 
\begin{equation} \label{proj_monic}
p_\a (x) := \frac{(-1)^n}{ 2^n (\frac{d-2}{2})_n} \|x\|^{2 |\a|+d-2}  \partial^\a \left\{ \|x\| ^{- d+2} \right \}.
\end{equation}
Then
\begin{enumerate}[\quad 1.]
\item $p_\a \in \CH_n^d$ and $p_\a$ is the monic spherical harmonic of the form
\begin{equation} \label{monic_sh}
     p_\a(x) = x^\a + \|x\|^2 q_\a(x), \qquad q_\a \in \P_{n-2}^d.
\end{equation}
\item $p_\a$ satisfies the recurrence relation
\begin{equation} \label{monic_recu}
p_{\a+e_i} (x) = x_i p_\a(x) - \frac{1}{ 2n + d -2} \|x\|^2 \partial_i p_\a(x),
\end{equation}
\item  $\{p_\a : |\a| = n, \a_d = 0 \,\, \mathrm{or}\,\, 1\}$ is a basis of $\CH_n^d$.
\end{enumerate}
\end{thm}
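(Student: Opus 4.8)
The plan is to make the recurrence \eqref{monic_recu} the engine of the whole proof and to deduce items~1 and~3 from it by induction on $n=|\a|$. First I would record the two base cases directly from \eqref{proj_monic}: since $(\tfrac{d-2}{2})_0=1$ we get $p_0=\|x\|^{d-2}\|x\|^{2-d}=1=x^0$, and from $\partial_i\|x\|^{2-d}=(2-d)x_i\|x\|^{-d}$ together with $(\tfrac{d-2}{2})_1=\tfrac{d-2}{2}$ we get $p_{e_i}=\tfrac{-1}{d-2}\,\|x\|^{d}(2-d)x_i\|x\|^{-d}=x_i$. These are monic harmonic polynomials, so items~1 and~3 hold for $n\le 1$, where $\CH_n^d=\P_n^d$.

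Next I would prove item~2 by a direct computation from the closed form, carried out on $\RR^d\setminus\{0\}$, where every function involved is smooth. Write $p_\a=c_n\|x\|^{2n+d-2}\varphi$ with $\varphi:=\partial^\a\{\|x\|^{2-d}\}$ and $c_n:=\dfrac{(-1)^n}{2^n(\frac{d-2}{2})_n}$. Applying $\partial_i$ and using $\|x\|^{2n+d-4}x_i\varphi=\|x\|^{-2}x_i\,p_\a/c_n$ one solves for $c_n\|x\|^{2n+d-2}\partial_i\varphi=\partial_i p_\a-(2n+d-2)\|x\|^{-2}x_i p_\a$, and then, since $p_{\a+e_i}=c_{n+1}\|x\|^{2n+d}\partial_i\varphi$, multiplying through gives
$$
 p_{\a+e_i}=\frac{c_{n+1}}{c_n}\bigl(\|x\|^2\partial_i p_\a-(2n+d-2)\,x_i p_\a\bigr).
$$
The only nontrivial point is the constant ratio $\dfrac{c_{n+1}}{c_n}=-\dfrac12\cdot\dfrac{1}{\frac{d-2}{2}+n}=-\dfrac{1}{2n+d-2}$, which turns the display into exactly \eqref{monic_recu}. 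Since $p_0$ and $p_{e_i}$ are polynomials and the right-hand side of \eqref{monic_recu} carries a homogeneous polynomial of degree $n$ to one of degree $n+1$, an induction then shows every $p_\a$ is a homogeneous polynomial of degree $|\a|$; in particular \eqref{monic_recu}, a priori valid only off the origin, is an identity of polynomials.

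With the recurrence in hand, item~1 follows from two further inductions. For harmonicity: assuming $\Delta p_\a=0$, the Leibniz rule $\Delta(fg)=(\Delta f)g+2\sum_j\partial_jf\,\partial_jg+f\Delta g$ gives $\Delta(x_ip_\a)=2\partial_ip_\a$, while $\Delta(\|x\|^2\partial_ip_\a)=2d\,\partial_ip_\a+4\sum_jx_j\partial_j(\partial_ip_\a)=2(2n+d-2)\partial_ip_\a$ by Euler's relation applied to the degree-$(n-1)$ polynomial $\partial_ip_\a$; hence $\Delta p_{\a+e_i}=0$ by \eqref{monic_recu}, so $p_\a\in\CH_n^d$. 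For the monic form: assuming $p_\a=x^\a+\|x\|^2q_\a$ with $q_\a\in\P_{n-2}^d$, substitute into \eqref{monic_recu}; the term $x_i\,x^\a$ produces $x^{\a+e_i}$, and every remaining term visibly carries a factor $\|x\|^2$ times a polynomial of degree $n-1$ (using $\partial_i(\|x\|^2q_\a)=2x_iq_\a+\|x\|^2\partial_iq_\a$), so $p_{\a+e_i}=x^{\a+e_i}+\|x\|^2q_{\a+e_i}$ with $q_{\a+e_i}\in\P_{n-1}^d$.

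For item~3 I would first check the cardinality: counting compositions, $\#\{\a\in\NN_0^d:\ |\a|=n,\ \a_d\in\{0,1\}\}=\binom{n+d-2}{n}+\binom{n+d-3}{n-1}$, and two applications of Pascal's identity identify this with $\binom{n+d-1}{n}-\binom{n+d-3}{n-2}=\dim\CH_n^d$ by \eqref{eq:dimHnd}. It therefore suffices to show the set spans $\CH_n^d$. Given $Y\in\CH_n^d\subset\P_n^d$, repeatedly substituting $x_d^2=\|x\|^2-x_1^2-\cdots-x_{d-1}^2$ in the monomials of $Y$ lowers the $d$-th exponent and yields $Y=\sum_{|\a|=n,\ \a_d\in\{0,1\}}b_\a x^\a+\|x\|^2r$ with $r\in\P_{n-2}^d$; replacing each $x^\a$ by $p_\a-\|x\|^2q_\a$ (item~1) shows $Y-\sum b_\a p_\a\in\|x\|^2\P_{n-2}^d$. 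But $Y-\sum b_\a p_\a\in\CH_n^d$ and $\CH_n^d\cap\|x\|^2\P_{n-2}^d=\{0\}$ by \thmref{thm:Pn_decomp}, so $Y=\sum b_\a p_\a$; thus the set spans and, having the right cardinality, is a basis. The step I expect to fight with is item~2 — keeping the powers of $\|x\|$ straight and pinning down the ratio $c_{n+1}/c_n$ — together with the easily overlooked point that $p_\a$, defined by a formula singular at the origin, is genuinely a polynomial; both are absorbed into the induction above.
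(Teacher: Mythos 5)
Your proof is correct, and its skeleton (derive the recurrence from the closed form, then induct on $|\a|$) is the same as the paper's; but two of the sub-arguments take genuinely different routes. For harmonicity, the paper never inducts: it invokes the identity \eqref{Delta_|x|g} with $g=p_\a$, $\rho=-2n-d+2$ to see that $\Delta$ passes through the factor $\|x\|^{-2n-d+2}$, and then uses that $\Delta$ commutes with $\partial^\a$ together with $\Delta\{\|x\|^{2-d}\}=0$, so that $\Delta p_\a$ is a multiple of $\|x\|^{2|\a|+d-2}\partial^\a\Delta\{\|x\|^{2-d}\}=0$. That argument exposes the conceptual point of Maxwell's representation (harmonicity of $p_\a$ is inherited from harmonicity of the Newtonian kernel), whereas your Leibniz-plus-Euler induction through \eqref{monic_recu} never uses that $\|x\|^{2-d}$ is harmonic at all and is self-contained at the level of polynomial identities; both are valid. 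For item~3, the paper argues linear independence of $\{p_\a\}$ from the monic form \eqref{monic_sh} and then matches cardinalities, while you argue spanning (reducing powers of $x_d$ modulo $\|x\|^2\P_{n-2}^d$ and then using $\CH_n^d\cap\|x\|^2\P_{n-2}^d=\{0\}$ from Theorem~\ref{thm:Pn_decomp}) and then match cardinalities. Your version is in fact a bit more airtight here: the paper's one-line justification that $\|x\|^2q(x)$ only involves monomials $x^\b$ with $\b_d\ge2$ is not literally true as stated and needs the same kind of reduction-mod-$\|x\|^2$ argument you spell out. The cardinality count and the computation of the ratio $c_{n+1}/c_n$ are both correct.
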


\begin{proof}
Taking the derivative of $p_\a(x)$ gives immediately the recurrence relation \eqref{monic_recu}.
Clearly $p_0 (x) = 1$. By induction, the recurrence relation shows that $p_\a$ is a
homogeneous polynomial of degree $n$ and it is of the form \eqref{monic_sh}. We now show that
$p_\a$ is a spherical harmonic. For $g \in \P_n^d$
and $\rho \in \RR$, a quick computation using $\sum_{i=1}^d x_i \partial_i g(x) = n g(x)$ shows that
\begin{equation}\label{Delta_|x|g}
\Delta (\|x\|^\rho g) = \rho (2n + \rho + d - 2) \|x\|^{\rho -2} g + \|x\|^\rho \Delta g.
\end{equation}
In particular, setting $n = 0$ and $g(x) = 1$ gives $\Delta (\|x\|^{-d+2}) = 0$. Furthermore,
setting $g = p_\a$ and $\rho = -2n - d + 2$ in \eqref{Delta_|x|g} leads to
$$
\Delta p_\a (x) = \frac{(-1)^n}{2^n (d/2 - 1)_n} \|x\|^{2 |\a|+d-2}
  \partial^\a \Delta \{ \|x\|^{-d+2} \} = 0.
$$
Thus, $p_\a \in \CH_n^d$. Since $\|x\|^2 q(x)$ is a linear combination of the monomials $x^\b$
with $\b_d\ge 2$, by \eqref{monic_sh} and the linear independence of $\{x^\a: |\a| =n, \a_d=0
\ \text{or}\,1\}$, it follows that the elements in the set $\{p_\a: |\a| =n, \a_d=0  \ \text{or}\ 1\}$ are
linearly independent. The cardinality of the set is
$$
  \dim \P_n^{d-1} + \dim \P_{n-1}^{d-1} =  \binom{n+d-2}{d-2} + \binom{n+d-3}{d-2},
$$
which is, by a simple identity of binomial coefficients and \eqref{eq:dimHnd},  precisely $\dim \CH_n^d$.
This completes the proof.
\end{proof}

The right-hand side of \eqref{proj_monic} is called Maxwell's representation of harmonic
polynomials (\cite{Hob, Muller}). The complete set of $\{p_\a: |\a| =n\}$ is necessarily linearly
dependent by its cardinality. Moreover, by \eqref{proj_monic},
$$
   p_{\a+2 e_1} + \ldots + p_{\a+2 e_d} =  \frac{(-1)^n}{ 2^n (\frac{d}{2})_n} \|x\|^{2 |\a|+d-2}
      \partial^\a \Delta \left \{ \|x\| ^{- d+2} \right \} =0,
$$
which gives $\dim \P_{n-2}^d$ linearly dependent relations among $\{p_\a: |\a| =n\}$. The
set $\{p_\a: |\a| = n\}$ evidently contains many bases of $\CH_n^d$. The basis in  
item three of Theorem \ref{thm:monic} is but one convenient choice. The proof of Theorem
\ref{thm:monic} relies on the fact that $\|x\|^{- d+2}$ is a harmonic function in
$\RR^d\setminus \{0\}$ for $d > 2$. In the case of $d =2$, we need to replace this
function by $\log \|x\|$. Since the case $d =2$ corresponds to the classical Fourier
series, we leave the analogue of Theorem \ref{thm:monic} for $d =2$ to the interested
reader.

The basis $\{p_\a : |\a| = n, \a_d = 0 \,\, \mathrm{or}\,\, 1\}$ of $\CH_n^d$ is not orthonormal.
In fact, the elements of this basis are not mutually orthogonal. Orthonormal bases can be
constructed by applying the Gram-Schmidt process. An explicit orthonormal basis for
$\CH_n^d$ will be given in Section \ref{sec:spherical-coor} in terms of spherical coordinates.

\section{Projection operators and Zonal harmonics}\label{sec:projction}
\setcounter{equation}{0}

Let $L^2(\sph)$ denote the space of square integrable functions on $\sph$. Let
$$
   \proj_n : L^2 (\sph) \mapsto \CH_n^d
$$
denote the orthogonal projection from $L^2 (\sph)$ onto $\CH_n^d$. If $P \in \P_n^d$
then $P = P_n + \|x\|^2 Q_n$, where $P_n \in \CH_n^d$ and $Q_n \in \P_{n-2}^d$, by
\eqref{eq:Pn_decom2}, so that $\proj_n P = P_n$. In particular, \eqref{monic_recu} shows
that $p_\a$ defined in \eqref{proj_monic} is the orthogonal projection of the function
$q_\a(x) = x^\a$; that is, $p_\a = \proj_n q_\a$. This leads to the following:

\begin{lem} \label{thm:H-proj}
Let $p \in \P_n^d$. Then
\begin{equation} \label{eq:H-proj}
  \proj_n p = \sum_{j=0}^{\lfloor n/2 \rfloor} \frac{1}{4^j j !(-n + 2 - d/2)_j}
     \|x\|^{2j} \Delta^j p.
\end{equation}
\end{lem}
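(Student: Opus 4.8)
The plan is to peel off the harmonic part of $p$ degree by degree, using the decomposition from Theorem~\ref{thm:Pn_decomp}. Writing $p = \sum_{j=0}^{\lfloor n/2\rfloor} \|x\|^{2j} P_{n-2j}$ with $P_{n-2j}\in\CH_{n-2j}^d$, we have $\proj_n p = P_n$, so the claim amounts to an explicit formula recovering $P_n$ from $p$ via iterated Laplacians. The natural tool is the identity \eqref{Delta_|x|g}, which for a harmonic $g\in\CH_m^d$ gives $\Delta(\|x\|^{2k} g) = 2k(2m+2k+d-2)\|x\|^{2k-2} g$. Iterating this, $\Delta^j(\|x\|^{2k}g)$ is a nonzero multiple of $\|x\|^{2k-2j}g$ when $k\ge j$ and vanishes when $k<j$, and the multiplicative constant is a product that telescopes into Pochhammer symbols.

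The key computation I would carry out first is the precise constant: applying $\Delta^j$ to $\|x\|^{2j}g$ with $g\in\CH_m^d$ yields
\begin{equation*}
\Delta^j\left(\|x\|^{2j} g\right) = 4^j j!\, \left(m + \tfrac d2\right)_j\, g,
\end{equation*}
which one verifies by induction on $j$ using \eqref{Delta_|x|g} at each step, since each application of $\Delta$ contributes a factor $2k(2m+2k+d-2) = 4k(m+k+\tfrac d2 -1)$ and the product of these over $k=1,\dots,j$ collapses to $4^j j! (m+\tfrac d2)_j$. More generally, for $k\ge j$, $\Delta^j(\|x\|^{2k} g)$ is a known constant times $\|x\|^{2(k-j)} g$, but the only term we ultimately need is the one that becomes a pure (degree-zero-in-$\|x\|^2$) harmonic, which requires matching the exponent.

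Next I would apply $\Delta^j$ to the full expansion $p = \sum_{k\ge 0} \|x\|^{2k} P_{n-2k}$. The terms with $k<j$ die; the terms with $k>j$ survive but still carry a positive power of $\|x\|^2$ times a harmonic of degree $n-2k$. Thus for each $j$, $\Delta^j p = 4^j j!\,(n-2j+\tfrac d2)_j\, P_{n-2j} + \|x\|^2(\text{lower-degree stuff})$, and since $P_{n-2j}\in\CH_{n-2j}^d$ while the remainder lies in $\|x\|^2\P_{n-2j-2}^d$, applying $\proj_{n-2j}$ isolates $P_{n-2j}$. In particular $\proj_n p = P_n$, and reading off the $j=0$ coefficient is trivial; but to express $P_n$ in terms of $p$ I instead use the $j$th equation to solve recursively for $P_{n-2j}$, or — cleaner — I directly form the linear combination $\sum_j c_j \|x\|^{2j}\Delta^j p$ and choose $c_j$ so that all terms except $P_n$ cancel. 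Plugging the expansion of each $\Delta^j p$ into $\sum_j c_j\|x\|^{2j}\Delta^j p$ and collecting the coefficient of $\|x\|^{2l} P_{n-2l}$ gives, for each $l\ge 1$, a linear relation on $c_0,\dots,c_l$ that forces the stated value $c_j = \bigl(4^j j!\,(-n+2-\tfrac d2)_j\bigr)^{-1}$; the case $l=0$ confirms the coefficient of $P_n$ is $1$.

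The main obstacle is bookkeeping the Pochhammer arithmetic correctly — in particular reconciling the sign and argument in $(-n+2-d/2)_j$ with the positive factors $(m+\tfrac d2)_j$ coming from \eqref{Delta_|x|g}, since the cancellation relies on an identity of the shape $(-n+2-\tfrac d2)_j$ vs.\ $(n-2l+\tfrac d2)_{\,\cdot}$ evaluated at shifted indices, together with the binomial-type coefficients produced when $\Delta^j$ hits $\|x\|^{2k}P_{n-2k}$ for $k>j$. I would handle this by setting up the general formula
\begin{equation*}
\Delta^j\left(\|x\|^{2k} g\right) = 4^j \frac{k!}{(k-j)!}\,\frac{(m+\tfrac d2)_k}{(m+\tfrac d2)_{k-j}}\,\|x\|^{2(k-j)} g,\qquad g\in\CH_m^d,\ m = n-2k,
\end{equation*}
proved once by induction, and then it is a finite triangular linear system whose unique solution I verify satisfies the claimed closed form; an alternative is to prove \eqref{eq:H-proj} directly by induction on $n$, peeling off $P_n = p - \|x\|^2\,\proj_{n-2}^{(\text{shifted})}(\cdots)$, but the direct coefficient-matching is more transparent and avoids re-deriving the projection at each step.
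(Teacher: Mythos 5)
Your proposal is correct, and it takes a genuinely different route from the paper. The paper reduces by linearity to monomials $p = x^\a$, identifies $\proj_n x^\a$ with the Maxwell representation $p_\a$ of Theorem~\ref{thm:monic}, and then proves \eqref{eq:H-proj} by induction on $n$ by differentiating the identity for $\partial^\a\{\|x\|^{-d+2}\}$; your argument instead starts from the harmonic decomposition $p=\sum_k\|x\|^{2k}P_{n-2k}$ of Theorem~\ref{thm:Pn_decomp}, iterates \eqref{Delta_|x|g} to get $\Delta^j(\|x\|^{2k}g)=4^j\frac{k!}{(k-j)!}\frac{(m+d/2)_k}{(m+d/2)_{k-j}}\|x\|^{2(k-j)}g$ for $g\in\CH_m^d$, and determines the coefficients $c_j$ from the resulting triangular system. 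Your version has two advantages: it works directly for arbitrary $p\in\P_n^d$ without passing through the Maxwell basis (which the paper only establishes for $d>2$), and it explains structurally why the right-hand side of \eqref{eq:H-proj} is a projection --- it is the unique combination of $\|x\|^{2j}\Delta^j p$ annihilating every component $\|x\|^{2l}P_{n-2l}$ with $l\ge 1$. The one step you leave as ``Pochhammer bookkeeping'' does close: writing $c_jA_{j,l}=\binom{l}{j}(-1)^j(b+l-1)_j/(b)_j$ with $b=-n+2-d/2$, the relation for level $l$ becomes ${}_2F_1(-l,\,b+l-1;\,b;\,1)=(1-l)_l/(b)_l$ by Chu--Vandermonde, which vanishes for $l\ge1$ since $(1-l)_l$ contains the factor $0$ (and $(b)_l\neq0$ because every factor $-n+2-d/2+i$, $0\le i<l\le\lfloor n/2\rfloor$, is negative). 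The price of your approach is exactly this hypergeometric verification, whereas the paper's induction trades it for a more computational but identity-free differentiation step.
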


\begin{proof}
By linearity, it suffices to consider  $p$ being $q_\a (x) = x^\a$. By Theorem \ref{thm:monic},
$\proj_n q_\a(x)  = p_\a(x)$, and the proof amounts to showing that $p_\a(x)$ defined in
\eqref{proj_monic} can be expanded as in \eqref{eq:H-proj}. We use induction on $n$.
The case $n = 0$ is evident. Assume that \eqref{eq:H-proj}  has been established for
$m = 0, 1, \dots, n$. Applying \eqref{eq:H-proj} to $q_\a (x)$, $|\a| = n$, it follows that
\begin{align*}
\partial^\a \left \{ \|x\|^{-d+2} \right \} = & (-1)^n 2^n \left(\tfrac{d}2-1\right)_n \|x\|^{- 2n -d+2}  \\
  & \times \sum_{j=0}^{\lfloor n/2 \rfloor} \frac{1}{4^j j !(-n + 2 - d/2)_j} \|x\|^{2j}
   \Delta^j  \{x^\a\}.
\end{align*}
Applying $\partial_i$ to this identity, we obtain
\begin{align*}
  & \partial_i \partial^\a \left \{ \|x\|^{-d+2} \right \} = (-1)^n 2^n \left(\tfrac{d}2-1\right)_n (-2n - d + 2) \|x\|^{-2n -d+2} \\
& \qquad \times \sum_{j=0}^{\lfloor (n+1)/2 \rfloor} \frac{1}{4^j j !(-n +1 - d/2)_j} \|x\|^{2j}
   [ x_i \Delta^j  \{x^\a\} + 2j \Delta^{j-1} \partial_i \{x^\a \} ].
\end{align*}
The terms in the square brackets are exactly $\Delta^j \{x_i x^\a\}$
and the constant in front simplifies to $( -1)^{n+1}2^{n+1} \left(\tfrac{d}2-1\right)_{n+1}$,
so that the equation \eqref{eq:H-proj} holds for $p(x) = x_i x^\a$. This completes the induction.
\end{proof}

\begin{defn}  
The reproducing kernel $Z_n (\cdot,\cdot)$ of $\CH_n^d$ is uniquely
determined  by
\begin{equation}\label{zonal-defn}
\frac{1}{\o_d} \int_{\sph} Z_n (x, y) p(y)d\s(y) = p(x), \quad \forall p \in \CH_n^d, \quad x \in \sph
\end{equation}
and the requirement that $Z_n(x,\cdot)$ be an element of $\CH_n^d$
for each fixed $x$.
\end{defn}

That the kernel is well defined and unique follows from the Riesz representation
theorem applied to the linear functional $L(Y ) := Y (x)$, $Y\in \CH_n^d$, for a
fixed $x \in \sph$.

\begin{lem} \label{lem:zonal1}
In terms of an orthonormal basis $\{Y_j : 1 \le j \le \dim \CH_n^d\}$ of $\CH_n^d$,
\begin{equation} \label{zonal-defn2}
Z_n (x, y) = \sum_{k=1}^{\dim \CH_n^d} Y_k (x)Y_k (y), \quad x, y \in \sph,
\end{equation}
and, despite \eqref{zonal-defn2}, $Z_n$ is independent of the particular choice
of basis of $\CH_n^d$.
\end{lem}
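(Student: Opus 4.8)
The plan is to verify that the function $F(x,y) := \sum_{k=1}^{\dim \CH_n^d} Y_k(x) Y_k(y)$ satisfies the two properties that characterize $Z_n$, and then to invoke the uniqueness of the reproducing kernel already established via the Riesz representation theorem.

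First I would check the membership condition: for each fixed $x \in \sph$, the function $F(x,\cdot) = \sum_k Y_k(x) Y_k(\cdot)$ is a finite linear combination of the basis elements $Y_k \in \CH_n^d$, hence lies in $\CH_n^d$. Next I would check the reproducing property. Given $p \in \CH_n^d$, expand it in the orthonormal basis as $p = \sum_j \la p, Y_j \ra_{\sph} Y_j$. Then, interchanging the finite sum with the integral,
$$
\frac{1}{\o_d}\int_{\sph} F(x,y) p(y)\, d\s(y) = \sum_k Y_k(x)\, \frac{1}{\o_d}\int_{\sph} Y_k(y) p(y)\, d\s(y) = \sum_k Y_k(x) \la Y_k, p\ra_{\sph} = p(x),
$$
which is exactly \eqref{zonal-defn}. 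By the uniqueness noted immediately after the definition of $Z_n$, we conclude $F = Z_n$, establishing \eqref{zonal-defn2}.

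For the last assertion I would observe that the definition of $Z_n$ through \eqref{zonal-defn}, together with the requirement that $Z_n(x,\cdot) \in \CH_n^d$, makes no reference to any basis; existence and uniqueness come from the Riesz representation theorem applied to the evaluation functional $Y \mapsto Y(x)$. Since the computation above shows that for every orthonormal basis $\{Y_k\}$ of $\CH_n^d$ the sum $\sum_k Y_k(x) Y_k(y)$ equals this one kernel $Z_n(x,y)$, the value of the sum cannot depend on which orthonormal basis is chosen.

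There is essentially no serious obstacle here; the only point requiring a moment's care is that the coefficients in the expansion of $p$ are the inner products $\la p, Y_k\ra_{\sph}$ precisely because $\{Y_k\}$ is orthonormal with respect to the same inner product $\la \cdot,\cdot\ra_{\sph}$ appearing in \eqref{zonal-defn} — matching these two uses of the inner product is what makes the reproducing property drop out. One could instead bypass the uniqueness step with a direct change-of-basis computation: if $\{Y_k'\}$ is another orthonormal basis, write $Y_k' = \sum_j a_{kj} Y_j$, so that orthonormality forces the matrix $(a_{kj})$ to be orthogonal, whence $\sum_k Y_k'(x) Y_k'(y) = \sum_{i,j}\big(\sum_k a_{ki} a_{kj}\big) Y_i(x) Y_j(y) = \sum_j Y_j(x) Y_j(y)$; but routing through the reproducing-kernel characterization is cleaner and reuses what is already in place.
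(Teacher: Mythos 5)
Your proposal is correct and follows essentially the same route as the paper: both arguments rest on the uniqueness of $Z_n$ from the Riesz representation theorem together with the orthonormal expansion (the paper expands $Z_n(x,\cdot)$ in the basis and reads off the coefficients $c_k = Y_k(x)$ from the reproducing property, while you verify that the candidate sum satisfies the defining properties — two directions of the same argument). Even your remark about the direct change-of-basis computation with an orthogonal matrix is the alternative the paper itself records.
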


\begin{proof}
Since $Z_n (x,\cdot) \in \CH_n^d$, it can be expressed as $Z_n (x,
y) =\sum_k c_k Y_k (y)$ where the coefficients are determined by
\eqref{zonal-defn} as $c_k = Y_k (x)$. The uniqueness implies that
$Z_n$ is independent of the choice of basis. This can also be
shown directly as follows. Let $\YY_n = (Y_1 , . . . , Y_N )$ with
$N = \dim  \CH_n^d$ and regard it as a column vector. Then $Z_n (x,
y) = [\YY_n (x)]^{\mathrm{tr}} \YY_n (y)$. If $\{Y_j' : 1 \le  j
\le N \}$ is another orthonormal basis of $\CH_n^d$, then $\YY_n' =
Q\YY_n$. Since the orthonormality of $\{Y_j \}$ can be expressed
as the fact that $\frac{1}{\o_d} \int_{\sph} \YY_n (x)[\YY_n (x)]^{\mathrm{tr}}
d\s(x)$ is an identity matrix, it follows readily that $Q$ is an
orthogonal matrix. Hence, $Z_n (x, y) = [\YY_n (x)]^{\mathrm{tr}}
Q^\mathrm{tr} Q \YY_n (y) = [\YY'_n (x)]^\mathrm{tr}\YY'_n (y)$.
\end{proof}

The reproducing  kernel is also the kernel for the projection operator.

\begin{lem} \label{Integral-proj}  
The projection operator can be written as
\begin{equation} \label{sp-proj-kernel}
\proj_n f (x) = \frac{1}{\o_d} \int_{\sph} f (y)Z_n (x, y)d\s(y).
\end{equation}
\end{lem}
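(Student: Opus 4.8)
The plan is to verify formula \eqref{sp-proj-kernel} by using the orthonormal-basis expansion of $Z_n$ from \lemref{lem:zonal1} and reducing the claim to the definition of $\proj_n$ as orthogonal projection. Concretely, for $f \in L^2(\sph)$, fix an orthonormal basis $\{Y_k : 1 \le k \le \dim \CH_n^d\}$ of $\CH_n^d$. By definition of orthogonal projection onto a finite-dimensional subspace, we have $\proj_n f = \sum_k \la f, Y_k \ra_{\sph} Y_k$, that is,
$$
 \proj_n f(x) = \sum_{k=1}^{\dim \CH_n^d} \left( \frac{1}{\o_d}\int_{\sph} f(y) Y_k(y)\, d\s(y) \right) Y_k(x).
$$
The next step is to interchange the finite sum with the integral, which is harmless since the sum is finite, and to recognize $\sum_k Y_k(x) Y_k(y)$ as $Z_n(x,y)$ via \eqref{zonal-defn2}. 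This yields \eqref{sp-proj-kernel} directly.

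Alternatively, and perhaps more in the spirit of emphasizing the reproducing property, one can argue as follows. Define $T f(x) := \frac{1}{\o_d}\int_{\sph} f(y) Z_n(x,y)\, d\s(y)$. First observe that $T f \in \CH_n^d$ for every $f$, since $Z_n(x,y)$ as a function of $x$ lies in $\CH_n^d$ for each fixed $y$ (using the basis expansion, $Tf = \sum_k \la f, Y_k\ra_{\sph} Y_k \in \CH_n^d$). Next, one checks that $T$ acts as the identity on $\CH_n^d$: if $p \in \CH_n^d$, then by the defining reproducing property \eqref{zonal-defn} applied with the symmetry of $Z_n$ (which is manifest from \eqref{zonal-defn2}), $T p(x) = \frac{1}{\o_d}\int_{\sph} Z_n(x,y) p(y)\, d\s(y) = p(x)$. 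Finally, $T$ annihilates the orthogonal complement: if $f \perp \CH_n^d$ in $L^2(\sph)$, then since $Z_n(x,\cdot) \in \CH_n^d$, we get $Tf(x) = \frac{1}{\o_d}\la f, Z_n(x,\cdot)\ra_{\sph} = 0$. An operator that maps $L^2(\sph)$ into $\CH_n^d$, restricts to the identity on $\CH_n^d$, and vanishes on $(\CH_n^d)^\perp$ is precisely the orthogonal projection $\proj_n$, so $T = \proj_n$.

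There is really no serious obstacle here; the identity is essentially a restatement of the construction of $Z_n$. The only points requiring a word of care are the interchange of sum and integral (trivial, as the sum is finite), the use of the symmetry $Z_n(x,y) = Z_n(y,x)$ (immediate from the basis formula \eqref{zonal-defn2}), and noting that $f \in L^2(\sph)$ decomposes orthogonally as an element of $\CH_n^d$ plus something orthogonal to it, so that both sides of \eqref{sp-proj-kernel} depend linearly and continuously on $f$ and agree on the two pieces. I would present the first (basis-expansion) argument as the main proof since it is the shortest, mentioning the second characterization only if the book wishes to stress the reproducing-kernel viewpoint.
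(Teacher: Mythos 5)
Your first (basis-expansion) argument is exactly the proof given in the paper: expand $\proj_n f$ in an orthonormal basis of $\CH_n^d$ with coefficients $\frac{1}{\o_d}\int_{\sph} f(y)Y_k(y)\,d\s(y)$, pull the integral out of the finite sum, and identify the resulting kernel with $Z_n$ via \eqref{zonal-defn2}. The alternative reproducing-kernel characterization you sketch is also sound, but the main argument you chose coincides with the paper's.
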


\begin{proof}
Since $\proj_n f \in \CH_n^d$, it can be expanded in terms of the orthonormal
basis $\{Y_j , 1 \le j  \le N_n \}$, $N_n = \dim \CH_n^d$, of $\CH_n^d$, where the coefficients are
determined by the orthonormality,
$$
\proj_n f (x) = \sum_{j=1}^{N_n} c_j Y_j (x) \quad \hbox{with} \quad c_j
 = \frac{1}{\o_d} \int_{\sph}
   f (y)Y_j (y) d \s(y).
$$
If we pull out the integral in front of the sum, this is \eqref{sp-proj-kernel}
 by \eqref{zonal-defn2}.
\end{proof}

\begin{lem} \label{lem:zonal}
The kernel $Z_n (\cdot,\cdot)$ satisfies the following properties:
\begin{enumerate}[\quad 1.]
\item For every $\xi, \eta \in \sph$,
\begin{equation} \label{reproducing_Z}
 \frac{1}{\o_d} \int_{\sph} Z_n (\xi, y)Z_n (\eta,y)d \s(y) = Z_n (\xi, \eta).
\end{equation}
\item  $Z_n (x, y)$ depends only on $\la x, y\ra$.
\end{enumerate}
\end{lem}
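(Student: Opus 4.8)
The plan is to establish both properties directly from the series representation \eqref{zonal-defn2} together with the rotation-invariance of $\CH_n^d$ from Theorem \ref{cor:O(d)-basis}.

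For property~1, I would simply observe that $Z_n(\xi,\cdot)\in\CH_n^d$, so the reproducing property \eqref{zonal-defn} applies with $p(\cdot)=Z_n(\xi,\cdot)$ and the fixed point taken to be $\eta$. That is,
$$
\frac{1}{\o_d}\int_{\sph} Z_n(\eta,y)Z_n(\xi,y)\,d\s(y)=Z_n(\xi,\eta),
$$
using the symmetry $Z_n(\eta,y)=Z_n(y,\eta)$ that is evident from \eqref{zonal-defn2}. Alternatively, one can expand both factors in the orthonormal basis $\{Y_k\}$ and use orthonormality to collapse the double sum to $\sum_k Y_k(\xi)Y_k(\eta)$. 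Either way this step is routine.

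For property~2, the key point is rotation invariance. Fix $Q\in O(d)$. By Theorem \ref{cor:O(d)-basis}, if $\{Y_k\}$ is an orthonormal basis of $\CH_n^d$, then so is $\{Y_k(Q\,\cdot)\}$; since Lemma \ref{lem:zonal1} asserts that $Z_n$ does not depend on the choice of orthonormal basis, we get
$$
Z_n(Qx,Qy)=\sum_k Y_k(Qx)Y_k(Qy)=\sum_k \widetilde Y_k(x)\widetilde Y_k(y)=Z_n(x,y),
$$
where $\widetilde Y_k=Y_k(Q\,\cdot)$. Thus $Z_n$ is invariant under the diagonal action of $O(d)$. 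The remaining step is the elementary geometric fact that any $O(d)$-invariant function of $(x,y)\in\sph\times\sph$ depends only on $\la x,y\ra$: given $(x,y)$ and $(x',y')$ with $\la x,y\ra=\la x',y'\ra$, one can find $Q\in O(d)$ with $Qx=x'$ and $Qy=y'$ (first map $x$ to $x'$, then rotate within the orthogonal complement of $x'$ to align the components of $y$ and $y'$ perpendicular to $x'$, which have equal length since $\|y\|=\|y'\|=1$ and the components along $x'$ agree). Applying the invariance gives $Z_n(x,y)=Z_n(x',y')$.

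I expect the main obstacle — really the only non-bookkeeping point — to be making the geometric reduction in property~2 precise, i.e.\ verifying that the orbits of the diagonal $O(d)$-action on $\sph\times\sph$ are exactly the level sets of $(x,y)\mapsto\la x,y\ra$. This is standard but deserves a sentence of justification as sketched above; everything else follows mechanically from \eqref{zonal-defn}, \eqref{zonal-defn2}, and Theorem \ref{cor:O(d)-basis}.
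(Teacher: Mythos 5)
Your proof is correct and follows essentially the same route as the paper: item 1 is just the reproducing property \eqref{zonal-defn} applied to $p=Z_n(\xi,\cdot)\in\CH_n^d$ (which the paper leaves implicit), and item 2 combines the invariance $Z_n(Qx,Qy)=Z_n(x,y)$ --- obtained from Theorem \ref{cor:O(d)-basis} together with the basis-independence/uniqueness of $Z_n$ --- with the fact that the diagonal $O(d)$-orbits on $\sph\times\sph$ are exactly the level sets of $\la x,y\ra$. The paper phrases that last geometric step by rotating $(x,y)$ to the canonical pair $\bigl((0,\dots,0,1),(0,\dots,0,\sqrt{1-\la x,y\ra^2},\la x,y\ra)\bigr)$ instead of mapping one pair to another, but this is the same argument.
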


\begin{proof}
By Corollary \ref{cor:O(d)-basis}, the uniqueness of $Z_n (x, y)$ shows that
 $Z_n (Qx, Qy) = Z_n (x, y)$ for all $Q \in  O(d)$. Since for $x, y \in \sph$ there exists a $Q
\in SO(d)$ such that $Qx = (0, . . . , 0, 1)$ and $Qy = (0, . . . , 0, \sqrt{1- \la x, y\ra^2}, \la x, y \ra)$,
this shows that $Z_n (x, y)$ depends only on $\la x, y\ra$.
\end{proof}

From the second property of the lemma, $Z_n (x, y) = F_n ( \la x, y \ra)$, which is
often called a zonal harmonic, since it is harmonic and depends only on $\la x, y\ra$.
We now derive a closed formula for $F_n$, which turns out to be a multiple of the
Gegenbauer polynomial, $C_n^\l$, of degree $n$ defined, for $\l > 0$ and
$n \in \NN_0$, by 
\begin{equation} \label{chapt1-Gegen-2F1}
  C_{n}^{\lambda }(x) :=\frac{(\lambda )_{n}2^{n}}{n!}x^{n}{}_{2}F_{1}\left(
       \begin{matrix} -\frac{n}{2},\frac{1-n}{2}\cr 1-n-\lambda \end{matrix};
          \frac{1}{x^{2}}\right),
\end{equation}
where ${}_2F_1$ is the hypergeometric function. The properties of the Gegenbauer
polynomials are collected in Appendix B.

\begin{thm}\label{thm:zonal}
For $n \in \NN_0$ and $x, y \in \sph$, $d \ge 3$,
\begin{equation} \label{zonal}
Z_n (x, y) =  \frac{n + \l} {\l} C_n^\l( \la x, y\ra ), \quad \l = \frac{d-2}{2}.
\end{equation}
\end{thm}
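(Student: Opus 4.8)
The plan is to identify the zonal kernel $Z_n$ with an explicit multiple of the Gegenbauer polynomial $C_n^\lambda$ by comparing two characterizing properties: the fact that $Z_n(x,\cdot)$ is a spherical harmonic of degree $n$ that depends only on $\langle x,y\rangle$, and the reproducing property. By Lemma~\ref{lem:zonal} we already know $Z_n(x,y) = F_n(\langle x,y\rangle)$ for some univariate function $F_n$; moreover, since $Z_n$ is a polynomial of degree $n$ in $y$, $F_n$ is a polynomial of degree at most $n$. The first step is to pin down $F_n$ as a specific polynomial. Fix $x = e_d = (0,\ldots,0,1)$; then $Z_n(e_d, y) = F_n(y_d)$ must be a harmonic homogeneous polynomial of degree $n$ on $\RR^d$ when written as $\|y\|^n F_n(y_d/\|y\|)$, i.e.\ $F_n(y_d)$ (as a function of $y$, extended homogeneously) lies in $\CH_n^d$. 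Applying the Laplacian to $\|y\|^n F_n(y_d/\|y\|)$ and using $\Delta = \partial_{y_d}^2 + \Delta'$ where $\Delta'$ is the Laplacian in the first $d-1$ variables produces, after restricting to the sphere and substituting $t = y_d$, a second-order ODE for $F_n$; this is precisely the Gegenbauer differential equation with parameter $\lambda = (d-2)/2$, whose only polynomial solution of degree $n$ (up to scalar) is $C_n^\lambda$. Hence $F_n = c_n C_n^\lambda$ for some constant $c_n$.

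The remaining task is to compute the normalizing constant $c_n$, and this is where the real work lies. The cleanest route is to use the reproducing property \eqref{zonal-defn} together with an evaluation at the diagonal: taking $\xi = \eta$ in \eqref{reproducing_Z} gives
\begin{equation*}
 \frac{1}{\o_d} \int_{\sph} Z_n(\xi,y)^2 \, d\s(y) = Z_n(\xi,\xi),
\end{equation*}
while from \eqref{zonal-defn2} with an orthonormal basis, $Z_n(\xi,\xi) = \sum_k Y_k(\xi)^2$, which integrated over $\sph$ gives $\frac{1}{\o_d}\int_{\sph} Z_n(\xi,\xi)\,d\s(\xi) = \dim\CH_n^d$. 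Thus $Z_n(x,x) = F_n(1) = \dim\CH_n^d$ is forced by the fact that $Z_n(x,x)$ is rotation-invariant, hence constant. Combining $F_n(1) = c_n C_n^\lambda(1)$ with the known value $C_n^\lambda(1) = \binom{n+2\lambda-1}{n} = \frac{(2\lambda)_n}{n!}$ (from Appendix~B) and with $\dim\CH_n^d = \binom{n+d-1}{n} - \binom{n+d-3}{n-2}$ from \eqref{eq:dimHnd}, an algebraic simplification with $\lambda = (d-2)/2$ yields $c_n = (n+\lambda)/\lambda$, which is the claimed constant.

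Alternatively—and perhaps more transparently—I would verify the constant by testing the reproducing identity \eqref{zonal-defn} against one explicit spherical harmonic. A natural choice is $p(y) = C_n^\lambda(\langle x, y\rangle)$ itself (which lies in $\CH_n^d$ as a function of $y$ by the ODE argument above): then \eqref{zonal-defn} demands
\begin{equation*}
 \frac{c_n}{\o_d} \int_{\sph} C_n^\lambda(\langle x,y\rangle) C_n^\lambda(\langle x,y\rangle)\, d\s(y) = C_n^\lambda(\langle x,x\rangle) = C_n^\lambda(1).
\end{equation*}
The integral on the left reduces, via the standard formula for integrating a function of $\langle x,y\rangle$ over $\sph$ (which converts $\frac{1}{\o_d}\int_{\sph} g(\langle x,y\rangle)\,d\s(y)$ into $\frac{\o_{d-1}}{\o_d}\int_{-1}^1 g(t)(1-t^2)^{(d-3)/2}\,dt$), to the $L^2$ norm of $C_n^\lambda$ against the Gegenbauer weight $(1-t^2)^{\lambda-1/2}$, whose value is recorded in Appendix~B. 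Plugging in that norm and solving for $c_n$ gives $(n+\lambda)/\lambda$ after simplification.

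The main obstacle is the bookkeeping in the constant: one must juggle the normalization of $C_n^\lambda$ from \eqref{chapt1-Gegen-2F1}, the Gegenbauer $L^2$-norm and the value at $1$ from Appendix~B, the dimension formula \eqref{eq:dimHnd}, and the surface-area ratio $\o_{d-1}/\o_d$ from \eqref{surface-area}, and check that all the Gamma-function factors collapse to the advertised $(n+\lambda)/\lambda$. The structural part—that $Z_n$ must be a scalar multiple of $C_n^\lambda$—is essentially forced and requires only the rotation-invariance (Corollary~\ref{cor:O(d)-basis} and Lemma~\ref{lem:zonal}) plus the observation that harmonicity of a zonal function is equivalent to the Gegenbauer ODE. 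A small care point: one must note $\lambda = (d-2)/2 > 0$ exactly when $d \ge 3$, which is why the hypothesis $d\ge 3$ appears; the case $d=2$ (Chebyshev polynomials, $\lambda = 0$) must be handled by the limiting form of \eqref{chapt1-Gegen-2F1} and is excluded here.
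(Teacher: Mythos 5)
Your argument is correct, but it is genuinely different from the one in the text. The paper computes $Z_n$ head-on: by Theorem~\ref{thm:inner_partial} the kernel equals $2^n(\tfrac d2)_n\proj_n[k_n(x,\cdot)]$ with $k_n(x,y)=\la x,y\ra^n/n!$, and the explicit projection formula of Lemma~\ref{thm:H-proj} turns this into a terminating hypergeometric sum that is matched term by term with \eqref{chapt1-Gegen-2F1}; the identity $C_n^\l(1)=\tfrac{\l}{n+\l}\dim\CH_n^d$ and the orthogonality \eqref{eq:ultrasphericalOP} are then deduced as corollaries. You instead run the classical uniqueness argument: rotation invariance forces $Z_n(x,y)=F_n(\la x,y\ra)$ with $F_n$ a polynomial solution of the Gegenbauer ODE (equivalently, the coefficient recurrence the paper itself uses in Lemma~\ref{lem:harmonic_xd} for the irreducibility proof), hence $F_n=c_nC_n^\l$, and the constant is fixed by $F_n(1)=Z_n(x,x)=\dim\CH_n^d$, which needs only \eqref{zonal-defn2} and constancy of $Z_n(x,x)$ --- so there is no circularity with the paper's later Corollary~\ref{cor:zonal}. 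The trade-off: your route is more conceptual and avoids the series manipulation, but it imports $C_n^\l(1)=(2\l)_n/n!$ (or the Gegenbauer $L^2$-norm, in your second variant) from Appendix~B as external input, whereas the paper's computation produces these facts as outputs; you also lean on the uniqueness of the degree-$n$ polynomial solution of the ODE and on the small point that a rotation-invariant element of $\CH_n^d$ restricts to a genuine polynomial in $y_d$ on $\sph$ --- both standard, and the latter is exactly the opening step of the paper's Lemma~\ref{lem:harmonic_xd}.
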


\begin{proof}
Let $p \in \CH_n^d$. By Theorem \ref{thm:inner_partial}, $p(x) = \la k_n (x, \cdot), p \ra_\partial$. For fixed $x$, it
follows from the same theorem that
\begin{align*}
p(x) &  = \la k_n (x, \cdot), p \ra_\partial = \la \proj_n
 (k_n (x, \cdot)), p\ra_\partial \\
  & = \frac{2^n (d/2)_n}{\o_d}  \int_{\sph} \proj_n [k_n (x, \cdot)](y) p(y) d\s (y).
\end{align*}
Since the kernel $Z_n (\cdot, \cdot)$ is uniquely determined by
the reproducing property, this shows that $Z_n (x, y) = 2^n
(\frac{d}{2})_n \proj_n [k_n (x, \cdot)](y)$. Since $k_n (x,
\cdot)$ is a homogeneous polynomial of degree $n$ and, taking
the derivative on $y$, we have $\Delta^j k_n (x, y) = \|x\|^{2j}k_{n -2j} (x,
y)$, as is easily seen from $\partial_i k_n (x, y)
 = x_i k_{n -1}(x, y)$, Lemma \ref{thm:H-proj} shows, for $x, y \in \sph$, that
 $$
Z_n (x, y) = 2^n \( \frac{d}{2} \)_n \proj_n [k_n (x, \cdot)](y) = \sum_{j =0}^{\lfloor n/2\rfloor}
 \frac{ (\frac{d}{2})_n 2^{n -2j} }{ j! (1-n - \l)_j} k_{n-2j} (x, y).
$$
Using the fact $1/(n - 2j )! = (-n)_{2j} /n! = 2^{2j}(- \frac{n}2)_j (\frac{-n+1}2 )_j /n!$, we conclude then
\begin{align*}
Z_n (x, y) & = \frac{n + \l}{\l} \frac{(\l)_n 2^n}{n!}  \sum_{j =0}^{\lfloor n/2\rfloor}
  \frac{ (- \frac{n}2)_j (\frac{-n+1}2 )_j }{j!(1 - n -\l)_j} \la x, y \ra^{n -2j} \\
    & =  \frac{n + \l}{\l} \frac{(\l)_n 2^n}{n!}  \la x, y \ra^{n}{}_{2}F_{1}\left(
 \begin{matrix} -\frac{n}{2},\frac{1-n}{2}\cr 1-n-\lambda \end{matrix};
\frac{1}{ \la x, y \ra^{2}}\right),
\end{align*}
from which the stated result follows from \eqref{chapt1-Gegen-2F1}.
\end{proof}

Let $\{Y_i : 1 \le i \le \dim \CH_n^d \}$ be an orthonormal basis of $\CH_n^d$. Then \eqref{zonal} states that 
 \begin{equation}\label{addition_formula}
\sum_{ j=1}^{\dim \CH_n^d}
Y_j (x)Y_j (y)   =\frac{n + \l}{\l}C_n^\l (\la x, y \ra), \qquad \l = \frac{d-2}{2}.
\end{equation}
This identity is usually referred to as the addition formula of spherical harmonics, since
for $d = 2$ it is the addition formula of the cosine function (see
 Section \ref{sec:sph-harmonic-d=3}).

\begin{cor} \label{cor:zonal}
For $n \in \NN_0$ and $x, y \in \sph$, $d \ge 3$,
\begin{equation} \label{eq:cor:zonal}
        |Z_n(x,y)| \le \dim \CH_n^d \quad\hbox{and}\quad Z_n(x,x) = \dim \CH_n^d.
\end{equation}
\end{cor}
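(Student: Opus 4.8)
The plan is to read both assertions off the formula \eqref{zonal-defn2}, which expresses $Z_n$ through an orthonormal basis $\{Y_k\}$ of $\CH_n^d$, together with the diagonal value already implicit in the reproducing property. First I would establish the identity $Z_n(x,x) = \dim\CH_n^d$. Setting $y = x$ in \eqref{zonal-defn2} gives $Z_n(x,x) = \sum_{k=1}^{N} Y_k(x)^2$ with $N = \dim\CH_n^d$, which is a nonnegative function on $\sph$. By Lemma \ref{lem:zonal}, $Z_n(x,x)$ depends only on $\la x,x\ra = 1$, hence is a constant; alternatively one can invoke Theorem \ref{cor:O(d)-basis} directly, since $\{Y_k(Q\{\cdot\})\}$ is again an orthonormal basis and \eqref{zonal-defn2} is basis-independent by Lemma \ref{lem:zonal1}. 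To pin down the constant, integrate over $\sph$: $\frac{1}{\o_d}\int_{\sph} Z_n(x,x)\,d\s(x) = \sum_{k=1}^N \frac{1}{\o_d}\int_{\sph} Y_k(x)^2\,d\s(x) = \sum_{k=1}^N 1 = N$, using orthonormality. Since the integrand is the constant $Z_n(x,x)$, this constant equals $N = \dim\CH_n^d$.

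Next I would prove the pointwise bound $|Z_n(x,y)| \le \dim\CH_n^d$ for all $x,y \in \sph$. The natural tool is the Cauchy--Schwarz inequality applied to \eqref{zonal-defn2}:
$$
|Z_n(x,y)| = \Bigl|\sum_{k=1}^N Y_k(x) Y_k(y)\Bigr| \le \Bigl(\sum_{k=1}^N Y_k(x)^2\Bigr)^{1/2}\Bigl(\sum_{k=1}^N Y_k(y)^2\Bigr)^{1/2} = \bigl(Z_n(x,x)\bigr)^{1/2}\bigl(Z_n(y,y)\bigr)^{1/2}.
$$
By the first part just proved, both factors equal $(\dim\CH_n^d)^{1/2}$, so the product is $\dim\CH_n^d$, giving the claimed inequality. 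Alternatively, one could use the reproducing property \eqref{reproducing_Z} from Lemma \ref{lem:zonal}: writing $Z_n(\xi,\eta) = \frac{1}{\o_d}\int_{\sph} Z_n(\xi,y) Z_n(\eta,y)\,d\s(y)$ and applying Cauchy--Schwarz to this integral yields $|Z_n(\xi,\eta)| \le \|Z_n(\xi,\cdot)\|_{L^2} \|Z_n(\eta,\cdot)\|_{L^2}$, and \eqref{reproducing_Z} with $\xi = \eta$ shows $\|Z_n(\xi,\cdot)\|_{L^2}^2 = Z_n(\xi,\xi) = \dim\CH_n^d$, recovering the same bound. I would present the first (finite-sum) argument as the primary one since it is most transparent and self-contained.

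There is no real obstacle here; the only point requiring a small amount of care is the justification that $Z_n(x,x)$ is constant on $\sph$ before integrating — but this is immediate from part 2 of Lemma \ref{lem:zonal} (dependence on $\la x,y\ra$ alone). I would keep the writeup to a few lines: invoke \eqref{zonal-defn2}, note $Z_n(x,x) = \sum_k Y_k(x)^2$ is rotation-invariant hence constant, integrate to identify the constant as $\dim\CH_n^d$, then apply Cauchy--Schwarz to the sum \eqref{zonal-defn2} to obtain $|Z_n(x,y)| \le (Z_n(x,x) Z_n(y,y))^{1/2} = \dim\CH_n^d$. One may also remark, as a sanity check, that the bound is attained at $y = x$, which is consistent with the equality statement. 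If one prefers to avoid even the rotation-invariance remark, the value $Z_n(x,x) = \dim\CH_n^d$ also follows by evaluating \eqref{zonal} at $y = x$ using $C_n^\l(1) = \binom{n+2\l-1}{n}$ and the identity \eqref{eq:dimHnd}, but the basis argument is cleaner and does not depend on Gegenbauer normalizations.
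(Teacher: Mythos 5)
Your proof is correct and follows essentially the same route as the paper: both identify the constant $Z_n(x,x)$ by integrating the diagonal of \eqref{zonal-defn2} over $\sph$ using orthonormality, and both obtain the bound by Cauchy--Schwarz applied to that sum. The only cosmetic difference is that you justify the constancy of $Z_n(x,x)$ via the rotation-invariance statement of Lemma \ref{lem:zonal}, while the paper reads it off the explicit formula \eqref{zonal}; either is fine.
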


\begin{proof}
Set $F_n(t): = \frac{n+\l}{\l} C_n^\l(t)$. By \eqref{zonal}, $Z_n(x,x) = F_n(1)$ is a
constant for all $x \in \sph$. Setting $x = y$ in \eqref{zonal-defn2} and integrating over
$\sph$, we obtain
$$
 F_n(1) = \frac{1}{\o_d} \int_{\sph} Z_n(\la  x,x \ra) d\s(x)
   = \frac{1}{\o_d} \int_{\sph}  \sum_{k=1}^{\dim \CH_n^d} Y_k^2 (x) d\s(x)
   = \dim \CH_n^d.
$$
The inequality follows from applying the Cauchy-Schwarz inequality to \eqref{zonal-defn2}.
\end{proof}

Because of the relation \eqref{zonal}, the Gegenbauer polynomials with $\l = \frac{d-2}{2}$
are also called ultraspherical polynomials. A number of properties of the Gegenbauer
polynomials can be obtained from the zonal spherical harmonics. For example, the corollary
implies that $C_n^{\l}(1) = \frac{\l}{n+\l} \dim \CH_n^d$. Here is another example:

\begin{cor}
For $\l = \frac{d-2}{2}$, the Gegenbauer polynomials $C_n^\l$ satisfy the orthogonality
relation
\begin{equation} \label{eq:ultrasphericalOP}
    \frac{\o_{d-1}}{\o_d} \int_{-1}^1 C_n^\l (t) C_m^\l(t) (1-t^2)^{\l - \frac12} dt =h_n^\l \delta_{m,n},
\end{equation}
where
$$
   h_n^\l  =  \frac{\l}{n+\l} C_n^\l(1).
$$
\end{cor}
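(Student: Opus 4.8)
The plan is to deduce the orthogonality relation \eqref{eq:ultrasphericalOP} directly from the addition formula \eqref{addition_formula} together with the reproducing property \eqref{reproducing_Z} of the zonal kernel. The key observation is that \eqref{reproducing_Z} is an identity about the functions $F_n(\la x,y\ra)$, and once we integrate out the rotational symmetry it collapses to a one-dimensional integral against the weight $(1-t^2)^{\l-\frac12}$, which is exactly the Jacobian of the slicing $\sph \to [-1,1]$, $y \mapsto \la x,y\ra$.

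First I would fix $\xi,\eta\in\sph$ and apply \eqref{reproducing_Z} with the substitution $Z_n(x,y)=F_n(\la x,y\ra)$, where $F_n(t)=\frac{n+\l}{\l}C_n^\l(t)$, giving
$$
  \frac{1}{\o_d}\int_{\sph} F_n(\la \xi,y\ra) F_n(\la \eta,y\ra)\, d\s(y) = F_n(\la \xi,\eta\ra).
$$
Next I would integrate both sides over $\eta\in\sph$ against $F_m(\la \xi,\eta\ra)$ (for arbitrary $m$), and on the left-hand side interchange the order of integration; the inner integral over $\eta$ is again of the form above, so by the reproducing property it produces $F_n(\la \xi,y\ra)\delta_{m,n}$-type collapsing — more precisely, using \eqref{reproducing_Z} twice yields $\frac{1}{\o_d}\int_{\sph} F_n(\la\xi,\eta\ra)F_m(\la\xi,\eta\ra)\,d\s(\eta) = $ (integral of $F_n F_m$ over the sphere). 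A cleaner route: integrate \eqref{reproducing_Z} over $\xi$ (or simply note that $\frac{1}{\o_d}\int_{\sph} F_n(\la\xi,\eta\ra)F_m(\la\xi,\eta\ra)\,d\s(\eta)$ is, by \eqref{zonal-defn2} and the orthonormality of the $Y_j$, equal to $F_n(1)\delta_{m,n}$ divided by $\dim\CH_n^d$, i.e. equal to $\delta_{m,n}$ by Corollary \ref{cor:zonal} since $F_n(1)=\dim\CH_n^d$ — wait, one must track the normalization, and the upshot is $\frac{1}{\o_d}\int_{\sph}F_n(\la\xi,\eta\ra)F_m(\la\xi,\eta\ra)\,d\s(\eta)=F_n(1)\delta_{m,n}$).

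Then I would convert the sphere integral $\frac{1}{\o_d}\int_{\sph} g(\la\xi,\eta\ra)\,d\s(\eta)$ into a one-dimensional integral. Fixing $\xi=e_d$ and writing $\eta$ in spherical coordinates so that $\la\xi,\eta\ra=t$, the slice at height $t$ is a sphere $\sph[d-2]$ of radius $\sqrt{1-t^2}$, which contributes a factor $\o_{d-1}(1-t^2)^{(d-3)/2}$ and $dt/\sqrt{1-t^2}$ from the arclength, giving total weight $\o_{d-1}(1-t^2)^{(d-3)/2-1/2}\cdot$ — that is, $\o_{d-1}(1-t^2)^{\l-\frac12}$ with $\l=\frac{d-2}{2}$. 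Hence $\frac{1}{\o_d}\int_{\sph}g(\la\xi,\eta\ra)\,d\s(\eta) = \frac{\o_{d-1}}{\o_d}\int_{-1}^1 g(t)(1-t^2)^{\l-\frac12}\,dt$. Substituting $g(t)=F_n(t)F_m(t)$ and dividing through by $\frac{(n+\l)^2}{\l^2}$ — better, substituting $F_n=\frac{n+\l}{\l}C_n^\l$ directly — I get $\frac{\o_{d-1}}{\o_d}\int_{-1}^1 C_n^\l(t)C_m^\l(t)(1-t^2)^{\l-\frac12}\,dt = \frac{\l^2}{(n+\l)^2}F_n(1)\delta_{m,n} = \frac{\l^2}{(n+\l)^2}\cdot\frac{n+\l}{\l}C_n^\l(1)\delta_{m,n} = \frac{\l}{n+\l}C_n^\l(1)\delta_{m,n}=h_n^\l\delta_{m,n}$, as claimed.

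The main obstacle is the bookkeeping in the slicing step — correctly identifying the surface measure decomposition $d\s(\eta) = \o_{d-1}(1-t^2)^{(d-3)/2}\,dt\cdot d\s'$ on $\sph$ and verifying that the constant works out to exactly $\o_{d-1}/\o_d$ with the exponent $\l-\frac12$; this is a standard but easy-to-botch computation, and it may be cleaner to invoke the spherical-coordinate integration formula from Section \ref{sec:spherical-coor} rather than rederive it. A secondary, purely algebraic obstacle is tracking the normalization constant $\frac{n+\l}{\l}$ through the double application of the reproducing property so that the final constant is $h_n^\l = \frac{\l}{n+\l}C_n^\l(1)$ and not some other multiple; this is where I would be most careful, but it is routine once the $F_n(1)=\dim\CH_n^d$ identity from Corollary \ref{cor:zonal} is in hand.
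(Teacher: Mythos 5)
Your proposal is correct and follows essentially the same route as the paper's proof: one evaluates $\frac{1}{\o_d}\int_{\sph}Z_n(x,y)Z_m(x,y)\,d\s(y)$ as $F_n(1)\delta_{m,n}=(\dim\CH_n^d)\delta_{m,n}$ via the reproducing property \eqref{reproducing_Z} and the orthogonality of $\CH_n^d$ and $\CH_m^d$, rewrites the sphere integral as a one-variable integral against $\o_{d-1}(1-t^2)^{(d-3)/2}\,dt$ (the paper simply cites (A.5.1) rather than rederiving the slicing, and your hand derivation has a self-correcting off-by-$\tfrac12$ in the intermediate exponent---the $(d-2)$-sphere of radius $\sqrt{1-t^2}$ contributes $(1-t^2)^{(d-2)/2}$, not $(1-t^2)^{(d-3)/2}$), and then unwinds $F_n=\frac{n+\l}{\l}C_n^\l$ to land on $h_n^\l=\frac{\l}{n+\l}C_n^\l(1)$. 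No genuine gap; the argument matches the paper's.
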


\begin{proof}
Set again $F_n(t) = \frac{n+\l}{\l} C_n^\l (t)$. Since $Z_n(x,\cdot)$ and $Z_m(x,\cdot)$
are orthogonal over $\sph$, and by (A.5.1), their integrals can be written as
an integral of one variable, we obtain
\begin{align*}
   \frac{\o_{d-1}}{\o_d} \int_{-1}^1  F_n (t)F_m(t)(1 - t^2)^{\frac{d -3}{2}} d t
   & = \frac{1}{\o_d} \int_{\sph} F_n (\la x, y \ra)F_m(\la x, y\ra )d\s(y) \\
   & = F_n(1) \delta_{m,n} = (\dim \CH_n^d) \delta_{m,n},
\end{align*}
where the second line follows from \eqref{reproducing_Z} and Corollary \ref{cor:zonal}.
\end{proof}

The functions on $\sph$ that depend only on $\la x, y\ra$ are analogues of radial
functions on $\RR^d$ . For such functions, there is a Funk-Hecke formula given below.

\begin{thm} \label{thm:Funk-Hecke}  
Let $f$ be an integrable function such that  $\int_{-1}^1 |f(t)| (1-t^2)^{(d-3)/2} dt$ is finite
and $d \ge 2$. Then for every $Y_n\in \CH_n^d$,
\begin{equation}\label{eq:funk-hecke}
\int_{\sph} f (\la x, y\ra)Y_n (y)d\s(y) = \l_ n (f )Y_n (x),
\quad x \in \sph,
\end{equation}
where $\l_n (f )$ is a constant defined by
$$
\l_n (f ) = \o_{d-1} \int_{-1}^1 f (t) \frac{C_n^{\frac{d-2}{2}}(t) } {C_n^{\frac{d-2}{2}}(1)}
 (1 - t^2)^{\frac{d-3}{2}} dt.
$$
\end{thm}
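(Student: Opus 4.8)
The plan is to reduce the integral over the sphere to an integral over a single variable by using the zonal structure of the problem, and then to identify the resulting integral transform via the reproducing-kernel formula for $\CH_n^d$. First I would fix $x \in \sph$ and decompose $Y_n$ using the reproducing kernel: by \eqref{zonal-defn},
\begin{equation*}
 Y_n(x) = \frac{1}{\o_d} \int_{\sph} Z_n(x, z) Y_n(z) \, d\s(z),
\end{equation*}
so it suffices to show that the operator $T_f Y_n(x) := \int_{\sph} f(\la x, y\ra) Y_n(y)\, d\s(y)$ acts on $\CH_n^d$ as a scalar multiple of the identity; once that is known, the scalar can be computed by pairing against $Z_n(x,\cdot)$ or, more simply, by a direct one-variable reduction.

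The key step is to establish that $T_f$ maps $\CH_n^d$ into itself and commutes with the $O(d)$-action. Rotation invariance is immediate: for $Q \in O(d)$, the change of variables $y \mapsto Qy$ (under which $d\s$ is invariant, and $\la Qx, Qy\ra = \la x,y\ra$) gives $T_f(Y_n\circ Q)(x) = (T_f Y_n)(Qx)$. That $T_f Y_n$ again lies in $\CH_n^d$ follows because, expanding $f$ against the orthonormal system $\{F_m(t) := \tfrac{m+\mu}{\mu} C_m^\mu(t)\}$ ($\mu = \tfrac{d-2}{2}$) which is complete in $L^2((-1,1),(1-t^2)^{\mu-1/2}dt)$ by the previous corollary, one reduces to the case $f = F_m$, where $f(\la x,\cdot\ra) = Z_m(x,\cdot) \in \CH_m^d$, and $\int_{\sph} Z_m(x,y) Y_n(y)\,d\s(y)$ vanishes for $m \ne n$ by orthogonality of spherical harmonics of different degrees (\thmref{thm:Hm-orthogonal}) and equals $\o_d Y_n(x)$ for $m = n$ by \eqref{zonal-defn}. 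Thus $T_f Y_n \in \CH_n^d$ and, by the commutation with rotations together with the irreducibility content packaged in the uniqueness of the zonal kernel, $T_f$ must be multiplication by a constant $\l_n(f)$ depending only on $f$ and $n$.

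Finally I would compute $\l_n(f)$ explicitly. Take any $Y_n$ with $Y_n(x_0) \ne 0$ at the north pole $x_0 = (0,\ldots,0,1)$, or more cleanly take $Y_n(y) = Z_n(x_0, y) = F_n(\la x_0, y\ra)$ itself, so that $Y_n(x_0) = F_n(1) = \dim \CH_n^d$ by \corref{cor:zonal}. Evaluating \eqref{eq:funk-hecke} at $x = x_0$ and using the standard formula (referenced as (A.5.1)) that an integral over $\sph$ of a function of $\la x_0, y\ra$ collapses to $\o_{d-1}\int_{-1}^1 (\cdot)(1-t^2)^{(d-3)/2} dt$, I get
\begin{equation*}
 \l_n(f) \, F_n(1) = \int_{\sph} f(\la x_0, y\ra) F_n(\la x_0,y\ra) \, d\s(y) = \o_{d-1} \int_{-1}^1 f(t) F_n(t) (1-t^2)^{\frac{d-3}{2}}\, dt,
\end{equation*}
and dividing by $F_n(1)$ and rewriting $F_n(t)/F_n(1) = C_n^{(d-2)/2}(t)/C_n^{(d-2)/2}(1)$ yields exactly the claimed expression for $\l_n(f)$. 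The main obstacle is the first part — justifying that $T_f$ is a scalar on $\CH_n^d$ — which rests on the completeness of the Gegenbauer system in the weighted $L^2$ space (so that approximation and a density/limiting argument handle general integrable $f$ with the stated integrability), plus care that the integrals converge given only $\int_{-1}^1 |f(t)|(1-t^2)^{(d-3)/2}dt < \infty$; the case $d = 2$ requires separately noting that the "Gegenbauer" kernel degenerates to $\cos n\theta$ but the argument via \eqref{addition_formula} still goes through.
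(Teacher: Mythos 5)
Your proposal is correct and follows essentially the same route as the paper's proof: expand $f$ in the Gegenbauer system, identify $\frac{m+\lambda}{\lambda}C_m^{\lambda}(\langle x,\cdot\rangle)$ with the zonal kernel $Z_m(x,\cdot)$ via the addition formula, apply orthogonality of harmonics of different degrees together with the reproducing property, and finish with a density argument for general integrable $f$. The only differences are cosmetic: the Schur-type digression (rotation equivariance plus irreducibility) is logically superfluous once the expansion computation is in hand, and your evaluation at the north pole against $Z_n(x_0,\cdot)$ to pin down $\lambda_n(f)$ is just a repackaging of reading off the $n$-th Gegenbauer coefficient, as the paper does.
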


\begin{proof}
If $f$ is a polynomial of degree $m$, then we can expand $f$ in terms of
the Gegenbauer polynomials
$$
f (t) = \sum_{k=0}^m  \l_k \frac{k + \frac{d-2}{2}}{\frac{d-2}{2}}
C_k^{\frac{d-2}{2}}(t).
$$
where $\l_k$ are determined by the orthogonality of the Gegenbauer polynomials,
$$
\l_k = \frac{c_d}{C_k^{\frac{d-2}{2}}(1)}  \int_{-1}^1 f(t)
C_k^{\frac{d-2}{2}}(t)
  (1 - t^2)^{\frac{d-3}{2}} dt,
$$
and $c_d^{-1} = \int_{-1}^1 (1 -  t^2)^{\frac{d-3}{2}} dt  =
\o_d/\o_{d-1}$. From \eqref{zonal} and the reproducing property
of $Z_n (x, y)$, it follows that for $n\le m$
$$
\frac{1}{\o_d} \int_{\sph} f (\la x,y\ra) Y_n (y)d\s(y) = \l_n Y_n
(x), \quad x \in \sph.
$$
Since $\l_n /\o_d = \l_n (f )$ by definition, we have established the Funk-Hecke
formula \eqref{eq:funk-hecke} for polynomials, and hence, by the Weierstrass theorem,
for continuous functions, and the function satisfying the integrable condition in the
statement can be approximated by a sequence of continuous functions.
\end{proof}

\section{Zonal basis of spherical harmonics} \label{sec:FundamentalSystem}
\setcounter{equation}{0}

In view of the addition formula of spherical harmonics, one may ask whether there is a basis of
spherical harmonics that consists entirely of zonal harmonics. This question is closely related to
the problem of interpolation on the sphere.


Throughout this subsection, we fix $n$, set $N = \dim \CH_n^d$ and fix $\{Y_1,\ldots, Y_N\}$
as an orthonormal basis for $\CH_n^d$. Let $\{x_1, \ldots, x_N\}$ be a collection of points on
$\sph$.  We let $M_1 := Y_1(x_1)$ and, for $k =2,3,\ldots, N$, define matrices
$$
M_k:= \left[ \begin{matrix} Y_1(x_1) &\ldots & Y_{1}(x_k) \\
                                           \vdots & \ldots & \vdots \\
                                         Y_k(x_1) &\ldots & Y_k(x_k)  \end{matrix} \right] , \quad
M_k(x) := \left[ \begin{array}{c|c}
  M_{k-1}  & \begin{matrix} Y_1(x) \\   \vdots \\ Y_{k-1}(x) \end{matrix}\\
  \hline Y_{k}(x_1),  \hdots Y_{k}(x_{k-1}) & Y_{k}(x),
      \end{array} \right].
$$
The product of $M_N$ and its transpose $M_N^T$ can be summed on applying the addition
formula \eqref{addition_formula}, as $M_N^T M_N = [Z_n(x_i,x_j)]_{i,j=1}^N$, which shows, in particular,
\begin{equation} \label{eq:detM}
\det  [Z_n(x_i,x_j)]_{i,j=1}^N = (\det M_N)^2 \ge 0.
\end{equation}
This motivates the following definition.

\begin{defn} 
A collection of points $\{x_1,\ldots,x_N\}$ in $\sph$ is called a fundamental system of degree $n$ on
the sphere $\sph$ if
$$
   \det \left[ C_n^\l( \la x_i, x_j\ra)\right]_{i,j =1}^N > 0, \qquad \l = \tfrac{d-2}{2}.
$$
\end{defn}

\begin{lem}
There exists a fundamental system of degree $n$ on the sphere.
\end{lem}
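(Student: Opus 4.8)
The plan is to show that the determinant $\det[Z_n(x_i,x_j)]_{i,j=1}^N = (\det M_N)^2$ can be made strictly positive by choosing the points $x_1,\dots,x_N$ appropriately, which by \eqref{eq:detM} and \eqref{addition_formula} is equivalent to producing a fundamental system. Since $\det[Z_n(x_i,x_j)] = (\det M_N)^2 \ge 0$ always, the only thing that can go wrong is that $\det M_N = 0$ for the chosen points; so it suffices to exhibit one configuration of $N$ points on $\sph$ at which the matrix $M_N = [Y_i(x_j)]_{i,j=1}^N$ is nonsingular. I would argue this by induction on $k$, building the points one at a time using the auxiliary matrices $M_k$ and $M_k(x)$ introduced just before the definition.

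The induction claim is: for each $k$ with $1 \le k \le N$ there exist points $x_1,\dots,x_k \in \sph$ with $\det M_k \ne 0$. For the base case $k=1$, $M_1 = Y_1(x_1)$; since $Y_1$ is a nonzero element of $\CH_n^d$ it does not vanish identically on $\sph$ (a homogeneous harmonic polynomial vanishing on the sphere would, by homogeneity $Y_1(x) = \|x\|^n Y_1(x/\|x\|)$, vanish identically on $\RR^d$), so we may pick $x_1$ with $Y_1(x_1) \ne 0$. For the inductive step, suppose $x_1,\dots,x_{k-1}$ have been chosen so that $\det M_{k-1} \ne 0$. Expanding $\det M_k(x)$ along its last column, we get
$$
   \det M_k(x) = (\det M_{k-1})\, Y_k(x) + \sum_{i=1}^{k-1} c_i\, Y_i(x)
$$
for some constants $c_i$ depending only on the already-chosen points (the cofactors of the entries $Y_i(x)$). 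Thus $x \mapsto \det M_k(x)$ is a nontrivial linear combination of $Y_1,\dots,Y_k$ — nontrivial because the coefficient $\det M_{k-1}$ of $Y_k$ is nonzero by the inductive hypothesis and $\{Y_1,\dots,Y_k\}$ is linearly independent — hence a nonzero element of $\CH_n^d$. By the same non-vanishing argument as in the base case, this function does not vanish identically on $\sph$, so we may choose $x_k \in \sph$ with $\det M_k(x_k) \ne 0$; but $M_k(x_k) = M_k$ with this choice of $x_k$, completing the induction.

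Taking $k = N$ yields points $x_1,\dots,x_N \in \sph$ with $\det M_N \ne 0$, so $\det[Z_n(x_i,x_j)]_{i,j=1}^N = (\det M_N)^2 > 0$. By \eqref{addition_formula}, $Z_n(x_i,x_j) = \frac{n+\l}{\l} C_n^\l(\la x_i,x_j\ra)$ with $\l = \frac{d-2}{2}$, so
$$
   \det\left[C_n^\l(\la x_i,x_j\ra)\right]_{i,j=1}^N = \left(\frac{\l}{n+\l}\right)^N \det\left[Z_n(x_i,x_j)\right]_{i,j=1}^N > 0,
$$
and $\{x_1,\dots,x_N\}$ is a fundamental system of degree $n$.

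The only genuinely substantive point — the "hard part," though it is not hard — is the observation that a nonzero function in $\CH_n^d$ cannot vanish on all of $\sph$, which is what keeps the inductive choice of $x_k$ possible; this follows immediately from the homogeneity $Y(x) = \|x\|^n Y(x')$. Everything else is the cofactor expansion of $\det M_k(x)$ and the linear independence of an orthonormal basis. One should perhaps remark that this argument in fact shows fundamental systems are generic: the set of bad configurations is the zero set of the polynomial $(x_1,\dots,x_N) \mapsto \det M_N$ on $(\sph)^N$, a proper algebraic subvariety, hence of measure zero.
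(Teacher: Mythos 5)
Your proof is correct and follows essentially the same route as the paper's: an inductive choice of points using the cofactor expansion of $\det M_k(x)$, the linear independence of the orthonormal basis, and the identity $\det[Z_n(x_i,x_j)] = (\det M_N)^2$. The extra details you supply (the explicit expansion along the last column and the homogeneity argument for why a nonzero element of $\CH_n^d$ cannot vanish on all of $\sph$) are correct elaborations of steps the paper leaves implicit.
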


\begin{proof}
The existence of a fundamental system follows from the linear independence of $\{Y_1,\ldots, Y_N\}$.
Indeed, we can clearly choose $x_1 \in \sph$ such that $\det M_1 = Y_1(x_1) \ne 0$. Assume that
$x_1,\ldots, x_k$, $1 \leq  k \le N-1$, have been chosen such that $\det M_k \ne 0$. The determinant
$\det M_{k+1}(x)$ is a polynomial of $x$ and cannot be identically zero by the linear independence
of $\{Y_1,\ldots, Y_{k+1}\}$, so that there is a $x_{k+1} \in \sph$ such that $\det M_{k+1} = \det M_k(x_{k+1}) \ne 0$.
In this way, we end up with a collection of points $\{x_1,\ldots, x_N\}$ on $\sph$ that satisfies $\det M_N \ne 0$,
which implies $\det  \left[ C_n^\l( \la x_i, x_j\ra)\right]_{i,j =1}^N  = c_N (\det M_N)^2 > 0$, where $c_N = \l^N /(n+\l)^N$.
\end{proof}

The proof shows, in fact, that there are infinitely many fundamental systems. Indeed, regarding
$x_1,\ldots, x_N$ as variables, we say that $\det M_N$ is a $(d-1)N$-dimensional polynomial in these variables,
and its zero set is an algebraic surface of $\RR^{(d-1)N}$, which necessarily has measure zero.

\begin{thm}
If $\{x_1,\ldots, x_N\}$ is a fundamental system of points on the sphere, then
$\{C_n^{\l}(\la \cdot, x_i \ra ): i =1,2,\ldots, N\}$, $\l=\frac{d-2}{2}$, is a basis of $\CH_n^d |_{\sph}$.
\end{thm}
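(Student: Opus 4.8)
The plan is to show that the $N$ functions $C_n^{\l}(\la\cdot,x_i\ra)$, restricted to $\sph$, are linearly independent elements of $\CH_n^d|_{\sph}$; since $N=\dim\CH_n^d$, linear independence forces them to be a basis. That each $C_n^\l(\la\cdot,x_i\ra)$ lies in $\CH_n^d|_{\sph}$ is immediate from \eqref{zonal}: up to the nonzero scalar $\frac{\l}{n+\l}$, it equals $Z_n(\cdot,x_i)$, and $Z_n(\cdot,x_i)\in\CH_n^d$ by the defining property of the reproducing kernel (each $Z_n(x,\cdot)$, equivalently $Z_n(\cdot,x)$ by symmetry in $\la x,y\ra$, is a spherical harmonic). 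So the whole content is linear independence.

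For linear independence, suppose $\sum_{i=1}^N c_i\, C_n^\l(\la y, x_i\ra)=0$ for all $y\in\sph$. Equivalently $\sum_{i=1}^N c_i\, Z_n(y,x_i)=0$. The cleanest route is to evaluate this identity at each of the fundamental-system points $y=x_j$, $j=1,\ldots,N$, obtaining the linear system
$$
\sum_{i=1}^N c_i\, C_n^\l(\la x_j, x_i\ra) = 0, \qquad j=1,\ldots,N,
$$
whose coefficient matrix is $\left[C_n^\l(\la x_j,x_i\ra)\right]_{i,j=1}^N$. By the definition of a fundamental system this matrix has strictly positive determinant, hence is invertible, forcing $c_1=\cdots=c_N=0$. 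This shows the $N$ functions are linearly independent in $\CH_n^d|_{\sph}$, and comparing with $\dim\CH_n^d|_{\sph}=N$ completes the argument.

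**The main obstacle**, such as it is, is really just bookkeeping: one must be careful that $\dim\CH_n^d|_{\sph}=\dim\CH_n^d$, i.e. that distinct harmonic polynomials give distinct restrictions to the sphere — this follows because $Y(x)=\|x\|^n Y(x')$ recovers $Y$ on $\RR^d$ from its values on $\sph$, so the restriction map $\CH_n^d\to\CH_n^d|_{\sph}$ is injective (indeed an isomorphism). One should also note that the equality $\det\left[C_n^\l(\la x_i,x_j\ra)\right]=c_N(\det M_N)^2$ with $c_N=\l^N/(n+\l)^N>0$, already recorded in \eqref{eq:detM} and the preceding lemma, is what guarantees the fundamental-system condition is nonvacuous and consistent with $M_N$ being invertible; but for the present theorem we only need that the displayed determinant is nonzero, which is the hypothesis. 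No deeper input is required — the theorem is essentially a repackaging of the addition formula \eqref{zonal} together with the linear-algebra fact that $N$ independent vectors in an $N$-dimensional space form a basis.
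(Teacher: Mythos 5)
Your proof is correct, and it takes a somewhat different route from the paper's. You prove linear independence directly: a vanishing combination $\sum_i c_i C_n^\l(\la \cdot, x_i\ra)=0$ on $\sph$ is evaluated at the nodes $y=x_j$, producing a homogeneous system whose coefficient matrix $\left[C_n^\l(\la x_i,x_j\ra)\right]$ is nonsingular by the very definition of a fundamental system, so all $c_i$ vanish, and a dimension count finishes. The paper instead uses the addition formula \eqref{addition_formula} to expand $P_i=\frac{n+\l}{\l}C_n^\l(\la\cdot,x_i\ra)=\sum_k Y_k(x_i)Y_k$ in an orthonormal basis, identifies the transition matrix $M_N=[Y_k(x_i)]$, deduces $\det M_N\ne 0$ from the factorization $\det[Z_n(x_i,x_j)]=(\det M_N)^2$ of \eqref{eq:detM}, and inverts the system. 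Your argument is slightly more economical at the key step: it needs only the nonsingularity of the kernel matrix, which is the stated hypothesis, and never passes through $M_N$ or a choice of orthonormal basis. The paper's version buys a little more, namely the explicit change-of-basis matrix between $\{Y_k\}$ and $\{P_i\}$, which is exactly what is used in the interpolation discussion that follows the theorem. Both arguments invoke \eqref{zonal} (equivalently the addition formula) to certify that each $C_n^\l(\la\cdot,x_i\ra)$ is the restriction to $\sph$ of an element of $\CH_n^d$; you handle that point, as well as the injectivity of the restriction map $\CH_n^d\to\CH_n^d|_{\sph}$, correctly.
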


\begin{proof}
Let $P_i(x) = \frac{n+\l}{\l}C_n^{\l}(\la \cdot, x_i \ra)$. The addition theorem \eqref{addition_formula} gives
$$
   P_i(x) = \sum_{k=1}^N Y_k(x_i) Y_k(x), \qquad  i =1,2, \ldots, N,
$$
which shows that $\{P_1,\ldots, P_N\}$ is expressed in the basis $\{Y_1,\ldots, Y_N\}$ with 
transition matrix given by $M_N = [Y_k(x_i)]_{k,i=1}^N$. Since $\{x_1,\ldots, x_N\}$ is fundamental,
the matrix is invertible, by \eqref{eq:detM}. We can then invert the system to express $Y_k$ as a
linear combination of $P_1,\ldots, P_N$, which completes the proof.
\end{proof}

A word of caution is in order. The polynomial $C_n^{\lambda}(\la x,  x_i \ra)$ is, for $x \in \sph$, a linear
combination of the spherical harmonics according to the addition formula. It is not, however, a
homogeneous polynomial of degree $n$ in $x \in \RR^d$; rather, it is the restriction of the homogeneous
polynomial $\|x\|^n C_n^{\l}(\la x/\|x\|, y\ra)$ to the sphere. This is a situation in which the distinction between
$\CH_n^d$ and $\CH_n^d |_{\sph}$ is called for; see the discussion below Definition \ref{defn:CHnd}.

Fundamental sets of points are closely related to the problem of interpolation. Indeed, it can be
stated as follows: for a given set of data $\{(x_j,y_j): 1 \le j \le N\}$, $x_j \in \sph$ and $y_j \in \RR$,
there is a unique element $Y \in  \CH_n^d$ such that $Y (x_j) = y_j$, $j=1,\ldots, N$, if and only if
the points $\{x_1,\ldots,x_N\}$ form a fundamental system on the sphere. Much more interesting
and challenging is the problem of choosing points in such a way that the resulting basis of zonal spherical
harmonics has a relatively simple structure.

A related result is a zonal basis for the space $\P_n$ of homogeneous polynomials of degree
$n$ and the space $\Pi_n^d$ of all polynomials of degree at most $n$.

\begin{thm} \label{thm:zonal-basis}  
There exist points $\xi_{j,n} \in \sph$, $1 \le j \le r_n^d : = \dim \P_n^d$, such that
\begin{enumerate}[$(i)$]
\item$\{\la x ,\xi_{j,n}\ra^n: 1 \le j \le r_n^d\}$ is a basis for $\P_n^d$ of degree $n$.

\item For each polynomial $f \in \Pi_n^d$, there exist polynomials $p_j: [-1,1] \mapsto \RR$ for
$1 \le j \le r_n^d$ such that
$$
    f(x) = \sum_{j=1}^{r_n^d} p_j (\la x ,\xi_{j,n}\ra).
$$
\end{enumerate}
\end{thm}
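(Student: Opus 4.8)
The plan is to prove part $(i)$ first and then derive part $(ii)$ from it together with the decomposition of $\Pi_n^d$ into homogeneous pieces. For $(i)$, the key observation is that $\la x, \xi \ra^n$, as a function of $x$, is the homogeneous polynomial $\|x\|^n C$-type expression; more precisely, restricted to $\sph$ it is a zonal function, and by Theorem~\ref{thm:Pn_decomp} its ambient version $\la x, \xi\ra^n$ decomposes as $\sum_{0 \le j \le n/2} \|x\|^{2j} q_{n-2j}(x;\xi)$ with $q_{n-2j}(\cdot;\xi) \in \CH_{n-2j}^d$, where each $q_{n-2j}(\cdot;\xi)$ is a scalar multiple of the zonal harmonic $Z_{n-2j}(\cdot, \xi)$ by the reproducing property (indeed $\la x,\xi\ra^n = n!\, k_n(x,\xi)$ and $\proj_{n-2j} k_n(\cdot,\xi)$ is a multiple of $Z_{n-2j}(\cdot,\xi)$, exactly as in the proof of Theorem~\ref{thm:zonal}). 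So spanning $\P_n^d$ by the functions $\la \cdot, \xi_j\ra^n$ is equivalent to the statement that the $r_n^d \times r_n^d$ matrix whose rows encode the coordinates of $\la \cdot, \xi_j \ra^n$ in a fixed basis of $\P_n^d$ is nonsingular. That matrix entry is a polynomial in the coordinates of the points $\xi_1, \ldots, \xi_{r_n^d}$ (equivalently, its determinant is a polynomial on $(\sph)^{r_n^d}$), so by the now-familiar Vandermonde-type argument it suffices to show this determinant is not identically zero.

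To see the determinant is not identically zero, I would argue that the map $\xi \mapsto \la \cdot, \xi\ra^n$ has image spanning all of $\P_n^d$: if some $P \in \P_n^d$ were orthogonal (say, with respect to $\la\cdot,\cdot\ra_\partial$) to $\la\cdot,\xi\ra^n = n!\,k_n(\cdot,\xi)$ for every $\xi \in \sph$, then by the reproducing property (item~2 of Theorem~\ref{thm:inner_partial}) we would have $P(\xi) = 0$ for all $\xi \in \sph$, hence $P \equiv 0$ by homogeneity. Therefore $\{\la\cdot,\xi\ra^n : \xi \in \sph\}$ spans $\P_n^d$, so one can extract $r_n^d$ of these that form a basis; equivalently the determinant above is nonzero for some choice of points and hence for generic choices. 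This gives $(i)$, and moreover shows the admissible point configurations are the complement of a proper algebraic subset of $(\sph)^{r_n^d}$, just as in the remark following the fundamental-system lemma.

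For $(ii)$, fix any basis points $\xi_{1,n},\ldots,\xi_{r_n^d,n}$ as in $(i)$. Given $f \in \Pi_n^d$, write $f = \sum_{k=0}^n f_k$ with $f_k \in \P_k^d$ its homogeneous components. For each $k \le n$, part $(i)$ applied at degree $k$ produces points giving a zonal basis of $\P_k^d$; but we want a single set of points working for all degrees at once. The clean fix is to use the points $\xi_{j,n}$ of the top degree $n$: I claim that for $k \le n$ the functions $\{\la\cdot,\xi_{j,n}\ra^k : 1 \le j \le r_n^d\}$ still span $\P_k^d$. This follows from the spanning argument above (any $P \in \P_k^d$ vanishing on all $\la\cdot,\xi_{j,n}\ra^k$ must vanish at every $\xi_{j,n}$, but the $\xi_{j,n}$ need not be a fundamental system of degree $k$) — so this claim is precisely the point requiring care, and I would instead argue more robustly: after possibly enlarging/perturbing the point set within the generic (full-measure) stratum, we can arrange simultaneously that $\{\la\cdot,\xi_j\ra^k\}$ spans $\P_k^d$ for every $k = 0,1,\ldots,n$, since each condition excludes only a proper algebraic subset and a finite intersection of their complements is still nonempty. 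Granting this, each homogeneous component decomposes as $f_k(x) = \sum_{j=1}^{r_n^d} c_{j,k}\la x,\xi_{j,n}\ra^k$, and summing over $k$ gives $f(x) = \sum_{j=1}^{r_n^d} p_j(\la x,\xi_{j,n}\ra)$ with $p_j(t) := \sum_{k=0}^n c_{j,k} t^k$, as desired.

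The main obstacle is the "simultaneity" issue in $(ii)$: ensuring one configuration of $r_n^d$ points simultaneously yields spanning sets in every degree $k \le n$. The genericity/measure-zero argument handles this cleanly, but one must be slightly careful that $r_n^d = \dim\P_n^d \ge \dim\P_k^d$ for $k \le n$ (true, since binomial coefficients $\binom{k+d-1}{k}$ increase in $k$), so that one indeed has enough points at each lower degree. Everything else is the same determinant-nonvanishing / reproducing-kernel bookkeeping already used for fundamental systems.
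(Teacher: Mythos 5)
Your proof is correct, and part $(i)$ is essentially the paper's argument: both hinge on the identity $\la f, \la\cdot,\xi\ra^n\ra_\partial = n!\,f(\xi)$ coming from the reproducing property of $k_n$ for the inner product $\la\cdot,\cdot\ra_\partial$, so that a point configuration at which only the zero element of $\P_n^d$ vanishes yields a spanning (hence basis) set of powers $\la\cdot,\xi_{j,n}\ra^n$. Where you genuinely diverge is part $(ii)$, and you correctly isolate its crux: one fixed configuration must give spanning sets $\{\la\cdot,\xi_{j,n}\ra^m\}$ in every degree $m\le n$ simultaneously. You resolve this by genericity, observing that for each $m$ the failure locus is a proper algebraic (hence measure-zero) subset of $(\sph)^{r_n^d}$, that $r_n^d\ge \dim\P_m^d$ supplies enough points at each lower degree, and that the complement of a finite union of such sets is nonempty. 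The paper instead uses a multiplication trick that makes the originally chosen points work with no perturbation at all: if $f\in\P_m^d$ vanishes at every $\xi_{j,n}$, then so does $fg$ for every $g\in\P_{n-m}^d$; since $fg\in\P_n^d$ and the points were chosen to be a determining set for $\P_n^d$, it follows that $fg\equiv 0$ and hence $f\equiv 0$, so $\P_m^d=\sspan\{\la\cdot,\xi_{j,n}\ra^m\}$ for the very same points. The paper's route is shorter and pins down a concrete configuration in one stroke; yours buys the stronger (and still true) statement that admissible configurations are generic for all degrees at once, at the price of invoking the measure-zero argument a second time.
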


\begin{proof}
Following the proof of the existence of fundamental system of points, it is easy to see
that there exist points $\xi_{j,n}$ such that $f(\xi_{j,n}) = 0$ if and only if $f =0$ for all
$f \in \P_n^d$. For the proof of (i), we first deduce by the binomial formula that
$$
  f_{j,n} (x): = \la x ,\xi_{j,n} \ra^n = \sum_{|\a| =n} \f {n!}{\a!} \xi^{\a}_{j,n} x^\a, \quad 1 \le j \le r_n^d.
$$
Let $f \in \P_n^d$. Then $f(x) = \sum_{|\a|=n} a_\a x^\a$. By \eqref{eq:inner_partial-2},
$$
   \la f, f_{j,n} \ra_\partial = n! \sum_{|\a| =n} a_\a \xi^{\a}_{j,n} = n! f (\xi_{j,n}),
$$
which implies, by the choice of $\xi_{j,n}$, that $ \la f, f_{j,n} \ra_\partial = 0$, $1 \le j \le r_n^d$,
if and only if $f = 0$. Thus, $\{f_{j,n}\}^\perp = \{0\}$. Since $f_{j,n} \in \P_n^d$. This proves (i).

For the proof of (ii), let $m$ be an integer, $0 \le m \le n -1$, and let $f_{j,m,n} := \la x ,\xi_{j,n} \ra^m$.
For $f \in \P_m^d$, it follows as above that $\la f , f_{j,m,n} \ra_\partial = m! f(\xi_{j,m})$,
$1 \le j \le r_n^d$. Hence, if $\la f , f_{j,m,n} \ra_\partial  = 0$ for $1 \le j \le r_n^d$, then
$f(\xi_{j,m}) = (f g )(\xi_{j,m})= 0$ for  $1 \le j \le r_n^d$ and all $g \in \P_{n-m}^d$, which
implies by the choice of $\xi_{j,n}$ and the fact that $f g \in \P_n^d$, that $f g = 0$ or $f =0$.  Consequently,
$\P_m^d =\sspan\{f_{j,m,n}: 0 \le j \le r_n^d\}$ for $0 \le m \le n$. Since $\Pi_n^d = \sum_{m =0}^n
\P_m^d$, this proves (ii).
\end{proof}

\section{Laplace-Beltrami operator} \label{sec:Laplace-Beltrami}  
\setcounter{equation}{0}

The operator in the section heading is the spherical part of the Laplace operator,
which we denote by $\Delta_0$. The operator $\Delta_0$ plays an important role
for analysis on the sphere. The usual approach to deriving this operator relies on
an expression of the Laplace operator under a change of variables, which we
describe first.

For $x \in \RR^d$ let $x \mapsto u = u(x)$ be a change of variables that is a bijection,
so that we can also write $x = x(u)$. Introduce  the tensors
$$
    g_{i, j} := \sum_{k=1}^d \frac{\partial x_k}{\partial u_i}\frac{\partial x_k}{\partial u_j}
      \quad \hbox{and}\quad
     g^{i,j } := \sum_{k=1}^d \frac{\partial u_i}{\partial x_k}\frac{\partial u_j}{\partial x_k},  \quad
      1 \le i, j\le d,
$$
and let $g := \det (g_{i,j})_{i,j=1}^d$. Then $(g_{i,j})^{-1} = (g^{i,j})$. A general result in
Riemannian geometry, or a bit of tensor analysis, shows that the Laplace operator satisfies
\begin{equation} \label{Beltrami-general}
    \Delta = \sum_{i=1}^d  \frac{\partial^2}{\partial x_i^2} =
       \frac{1}{\sqrt{g}}  \sum_{i=1}^d \sum_{j=1}^d \frac{\partial}{\partial u_i}  \sqrt{g} g^{i,j}
              \frac{\partial}{\partial u_j}.
\end{equation}
The Laplace--Beltrami operator, i.e., the spherical part of the Laplace operator, can then be
derived from \eqref{Beltrami-general} by the change of variables $x \mapsto (r, \xi_1,\ldots,\xi_{d-1})$,
where $r > 0$ and $\xi = (\xi_1,\ldots, \xi_d) \in \sph$. For this approach and a derivation of
\eqref{Beltrami-general}, see \cite{Muller}. We shall adopt an approach that is elementary
and self-contained.

\begin{lem} \label{lem:Laplace-Beltrami}
In the spherical--polar coordinates $x = r \xi$, $r > 0$, $\xi \in \sph$,
the Laplace operator satisfies
\begin{equation}\label{Beltrami-radial}
\Delta = \frac{\partial^2}{\partial r^2} + \frac{d - 1}{r} \frac{\partial}{\partial r} + \frac{1}{r^2}\Delta_0,
\end{equation}
where  
\begin{equation}\label{Beltrami-explicit}
    \Delta_0 = \sum_{i=1}^{d-1} \frac{\partial^2} {\partial \xi_i^2} - \sum_{i=1}^{d-1} \sum_{j=1}^{d-1}
       \xi_i\xi_j  \frac{\partial^2} {\partial \xi_i\partial\xi_j} -(d-1) \sum_{i=1}^{d-1} \xi_i  \frac{\partial} {\partial \xi_i}.
\end{equation}
\end{lem}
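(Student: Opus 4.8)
The plan is to use the spherical--polar substitution $x = r\xi$ with $r = \|x\|$ and $\xi = x/\|x\| \in \sph$, and to compute $\partial_k = \partial/\partial x_k$ in terms of $\partial/\partial r$ and the $\partial/\partial \xi_i$. The subtlety here is that $\xi_1,\dots,\xi_d$ are not independent coordinates on the sphere: they satisfy $\xi_1^2 + \cdots + \xi_d^2 = 1$, so one must first decide how to treat them. The cleanest elementary route is to regard a function $f$ near a point of $\sph$ as extended to a neighbourhood in $\RR^d$ that is independent of $r$, i.e. work with $F(x) := f(x/\|x\|)$, and to use $\xi_1,\dots,\xi_{d-1}$ as the free variables with $\xi_d = \sqrt{1-\xi_1^2-\cdots-\xi_{d-1}^2}$ (or, better, carry all $d$ of the $\xi_i$ symmetrically and only at the end impose the constraint and its consequence $\sum_i \xi_i \,\partial/\partial\xi_i f = 0$ for functions homogeneous of degree $0$).

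First I would record the elementary derivatives $\partial r/\partial x_k = \xi_k$ and $\partial \xi_j/\partial x_k = (\delta_{jk} - \xi_j\xi_k)/r$. From these the chain rule gives
$$
 \partial_k = \xi_k \frac{\partial}{\partial r} + \frac1r \sum_{j} (\delta_{jk} - \xi_j\xi_k)\frac{\partial}{\partial \xi_j}.
$$
Next I would apply $\partial_k$ a second time, being careful that the coefficients $\xi_k$ and $(\delta_{jk}-\xi_j\xi_k)/r$ themselves depend on $x$; this produces $\partial_k^2$ as a sum of a pure $\partial^2/\partial r^2$ term, mixed $r$--$\xi$ terms, lower-order $\partial/\partial r$ terms, and a purely spherical second-order block. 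Then I would sum over $k = 1,\dots,d$. The key algebraic simplifications are $\sum_k \xi_k^2 = 1$, $\sum_k \xi_k(\delta_{jk}-\xi_j\xi_k) = \xi_j - \xi_j = 0$ (which kills all the mixed $r$--$\xi$ cross terms), and $\sum_k(\delta_{ik}-\xi_i\xi_k)(\delta_{jk}-\xi_j\xi_k) = \delta_{ij} - \xi_i\xi_j$. Tracking the $1/r$ terms coming from differentiating the $1/r$ factors and from differentiating $\xi_j$ yields exactly the coefficient $(d-1)/r$ on $\partial/\partial r$ and an extra $-(d-1)\xi_i\,\partial/\partial\xi_i$ inside the spherical block; collecting everything gives \eqref{Beltrami-radial} with $\Delta_0$ as in \eqref{Beltrami-explicit} (after using the constraint to pass from the symmetric $d$-variable form to the $(d-1)$-variable form, or equivalently noting that the $d$-variable expression, applied to degree-$0$-homogeneous functions, agrees with the stated one).

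The main obstacle is purely bookkeeping: keeping straight the several sources of lower-order terms when $\partial_k$ hits the $x$-dependent coefficients $\xi_k/1$ and $(\delta_{jk}-\xi_j\xi_k)/r$, and making sure the $1/r$ powers and the combinatorial factor $d-1$ come out correctly. There is no deep idea beyond the chain rule and the three contraction identities above; the care needed is in not dropping terms such as $\frac1r \frac{\partial}{\partial\xi_j}\xi_k$ or the $-2\xi_i\xi_k/r$ piece from differentiating $\delta_{jk}-\xi_j\xi_k$. A sanity check at the end: applying \eqref{Beltrami-radial} to a homogeneous harmonic $Y_n(x) = r^n Y_n(\xi)$ must reproduce $\Delta_0 Y_n = -n(n+d-2) Y_n$ on $\sph$, which one verifies directly from \eqref{Beltrami-radial} and $\Delta Y_n = 0$; this both confirms the formula and foreshadows the eigenvalue computation that presumably follows.
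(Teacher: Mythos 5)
Your proposal is correct and follows essentially the same route as the paper: a direct chain-rule computation of $\partial/\partial x_k$ in the spherical--polar variables, followed by squaring, summing over $k$, and simplifying with $\sum_k\xi_k^2=1$. The only (cosmetic) difference is that you carry all $d$ components of $\xi$ symmetrically via $F(x)=f(x/\|x\|)$ and impose the constraint at the end, whereas the paper takes $\xi_1,\dots,\xi_{d-1}$ as the free variables from the outset and treats $\partial/\partial x_d$ separately; your contraction identities and the bookkeeping of the lower-order terms are exactly the content of the paper's "straightforward, though tedious, computation."
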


\begin{proof}
Since $\xi \in \sph$, we have $\xi_1^2+\ldots + \xi_d^2 =1$. We evaluate the Laplacian $\Delta$ under
a change of variables $(x_1,\ldots,x_d) \mapsto (r,\xi_1,\ldots, \xi_{d-1})$ under $x = r \xi$, which has
inverse $\xi_1 = x_1/\|x\|, \ldots, \xi_{d-1} = x_{d-1} /\|x\|, r = \|x\|$.  The chain rule leads to
\begin{align} \label{eq:Beltrami-radial-2}
\begin{split}
 \frac{\partial} {\partial x_i} & =  \frac{1}{r}  \frac{\partial} {\partial \xi_i} - \frac{\xi_i}{r}
     \sum_{j=1}^{d-1} \xi_j  \frac{\partial} {\partial \xi_j} + \xi_i \frac{\partial} {\partial r}, \quad 1 \le i \le d-1, \\
    \frac{\partial} {\partial x_d} & =
     - \frac{x_d}{r^2} \sum_{j=1}^{d-1} \xi_j  \frac{\partial} {\partial \xi_j} +\frac{x_d}{r} \frac{\partial} {\partial r}.
\end{split}
\end{align}
If we apply the product rule for the partial derivative on $x_d$, it follows that
\begin{align*}
  \Delta = & \sum_{i=1}^{d-1} \bigg(\frac{1}{r}  \frac{\partial} {\partial \xi_i} - \frac{\xi_i}{r}
     \sum_{j=1}^{d-1} \xi_j  \frac{\partial} {\partial \xi_j} + \xi_i \frac{\partial} {\partial r}\bigg)^2 \\
           & + \xi_d^2 \bigg( - \frac{1}{r} \sum_{j=1}^{d-1} \xi_j  \frac{\partial} {\partial \xi_j}+\frac{\partial} {\partial r}
             \bigg)^2 +(1-\xi_d^2)\biggl( - \frac{1}{r^2} \sum_{j=1}^{d-1} \xi_j  \frac{\partial} {\partial \xi_j} +\frac{1}{r} \frac{\partial} {\partial r}\biggr),
\end{align*}
where we have used $x_d =  r\xi_d$, from which a
straightforward, though tedious, computation, and
simplification using $\xi_1^2+ \ldots + \xi_d^2 =1$, establishes \eqref{Beltrami-radial} and \eqref{Beltrami-explicit}.
\end{proof}

The Laplace--Beltrami operator also satisfies  a recurrence relation that can be used to
derive an explicit formula for $\Delta_0$ under a given coordinate system of $\sph$.
We write $\Delta_{0,d}$ instead of $\Delta_0$ when we need to emphasize the dimension.

\begin{lem} 
Let $\Delta_{0,d}$ be the Laplacian--Beltrami operator for $\sph$. For $\xi \in \sph$, write
$\xi = (\sqrt{1-t^2} \eta, t)$ with $-1 \le t \le 1$ and $\eta \in \SS^{d-2}$. Then
\begin{align} \label{Beltrami-iterate}
   \Delta_{0,d} = \frac{1}{(1-t^2)^{\frac{d-3}{2}} }\frac{\partial}{\partial t}\left (
           (1-t^2)^{\frac{d-1}2} \frac{\partial}{\partial t} \right) + \frac{1}{1-t^2} \Delta_{0,d-1}.
\end{align}
\end{lem}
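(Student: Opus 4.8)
The plan is to derive the recurrence \eqref{Beltrami-iterate} directly from the explicit expression \eqref{Beltrami-explicit} by performing the change of coordinates on $\sph$ that peels off the last coordinate. First I would set up coordinates adapted to the splitting $\xi = (\sqrt{1-t^2}\,\eta, t)$: parametrize a neighborhood of a generic point of $\sph$ by $t \in (-1,1)$ together with a local coordinate system $\eta = \eta(u_1,\ldots,u_{d-2})$ on $\SS^{d-2}$. The first $d-1$ coordinates of $\xi$ are then $\xi_i = \sqrt{1-t^2}\,\eta_i$ for $1 \le i \le d-1$, where $(\eta_1,\ldots,\eta_{d-1})$ are the first $d-1$ components of the unit vector $\eta \in \SS^{d-2} \subset \RR^{d-1}$, so they satisfy $\eta_1^2 + \cdots + \eta_{d-1}^2 = 1 - \eta_d^2$. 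I would compute $\frac{\partial}{\partial \xi_i}$ in the new variables by the chain rule, just as in the proof of \lemref{lem:Laplace-Beltrami}: inverting, one has $t = \xi_d$ and $\eta_j = \xi_j/\sqrt{1-\xi_d^2}$ for $1 \le j \le d-1$, leading to expressions of the form $\frac{\partial}{\partial \xi_i} = \frac{1}{\sqrt{1-t^2}}(\text{tangential part in }\eta) + (\text{terms in }\frac{\partial}{\partial t})$ for $i \le d-1$, and a separate formula for the piece coming from $\xi_d$.

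The key computational step is then to substitute these into \eqref{Beltrami-explicit} and collect terms. The terms involving only $\eta$-derivatives should reassemble, after accounting for the factor $(1-t^2)^{-1}$ from the chain rule, into exactly $\frac{1}{1-t^2}\Delta_{0,d-1}$ — here I would invoke \eqref{Beltrami-explicit} again, now in dimension $d-1$, to recognize the combination $\sum \partial_{\eta_i}^2 - \sum \eta_i\eta_j \partial_{\eta_i}\partial_{\eta_j} - (d-2)\sum \eta_i \partial_{\eta_i}$. The terms involving $\frac{\partial}{\partial t}$ (pure second derivatives, mixed $t$-$\eta$ derivatives, and first-order terms) must collapse to the single one-dimensional operator $\frac{1}{(1-t^2)^{(d-3)/2}}\frac{\partial}{\partial t}\big((1-t^2)^{(d-1)/2}\frac{\partial}{\partial t}\big) = (1-t^2)\frac{\partial^2}{\partial t^2} - (d-1)t\frac{\partial}{\partial t}$; in particular all mixed $t$-$\eta$ cross terms must cancel, which is the main consistency check. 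Throughout I would use the relation $\xi_1^2 + \cdots + \xi_{d-1}^2 = 1 - t^2$ and $\eta_1^2 + \cdots + \eta_{d-1}^2 = 1 - \eta_d^2$ to simplify.

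Alternatively — and this is probably cleaner — I would avoid recomputing from scratch and instead exploit the intrinsic characterization of $\Delta_0$. The operator $\Delta_{0,d}$ is, up to the radial part, the full Laplacian $\Delta$ on $\RR^d$ restricted appropriately; writing $x \in \RR^d$ as $x = \rho\,\xi$ with $\rho = \|x\|$ and further splitting the $\RR^d = \RR^{d-1} \times \RR$ with $x = (x', x_d)$, one has $\Delta_d = \Delta_{d-1} + \partial_{x_d}^2$, and \eqref{Beltrami-radial} applied in both $\RR^d$ and $\RR^{d-1}$ turns this identity, after introducing polar coordinates in each factor and the substitution $x_d = r t$, $x' = r\sqrt{1-t^2}\,\eta$, into \eqref{Beltrami-iterate}. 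The main obstacle in either route is bookkeeping: keeping the three kinds of terms (pure $t$, pure $\eta$, mixed) straight through the chain rule and verifying the cross terms vanish. I expect the second approach confines the delicate part to a clean two-dimensional polar-coordinate computation relating $(r,t)$ to $(\rho, x_d)$ and the radius in $\RR^{d-1}$, which is routine, so the cross-term cancellation is automatic rather than something to be checked by hand.
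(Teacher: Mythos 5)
Your first route is essentially the paper's own proof: the paper starts from \eqref{Beltrami-explicit}, substitutes $\xi_i=\sqrt{1-t^2}\,\eta_i$, and recognizes the $\eta$-part as $\Delta_{0,d-1}$ via \eqref{Beltrami-explicit} in dimension $d-1$, exactly as you describe. The one simplification the paper makes, which you should adopt, is to swap $\xi_{d-1}$ and $\xi_d$ (harmless, since $\Delta_{0}$ is symmetric in the coordinates) so that $t=\xi_{d-1}$ is already one of the $d-1$ local coordinates appearing in \eqref{Beltrami-explicit}; your choice $t=\xi_d$ forces you to express $t$ as $\sqrt{1-\xi_1^2-\cdots-\xi_{d-1}^2}$ before applying the chain rule, which adds needless bookkeeping. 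Your second route is genuinely different from the paper's and does work: writing $\Delta_d=\Delta_{d-1}+\partial_{x_d}^2$, applying \eqref{Beltrami-radial} in both $\RR^d$ and $\RR^{d-1}$, and treating $(s,x_d)=(r\sqrt{1-t^2},rt)$ as two-dimensional polar coordinates reduces everything to the identity $\partial_s^2+\partial_{x_d}^2+\frac{d-2}{s}\partial_s=\partial_r^2+\frac{d-1}{r}\partial_r+\frac{1}{r^2}\bigl((1-t^2)\partial_t^2-(d-1)t\partial_t\bigr)$, after which matching the $\frac{1}{r^2}$ terms gives \eqref{Beltrami-iterate}. As you anticipate, this confines the computation to a two-variable polar change and makes the cancellation of the mixed $t$--$\eta$ terms automatic, at the price of invoking \eqref{Beltrami-radial} twice rather than working directly with the explicit local-coordinate formula; either argument is acceptable.
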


\begin{proof}
We work with the expression for $\Delta_{0,d}$ in \eqref{Beltrami-explicit} and make a change of
variables $(\xi_1,\ldots,\xi_{d-1}) \mapsto (\eta_1,\ldots,\eta_{d-2}, t)$ defined by
$$
     \xi_1 = \sqrt{1-t^2} \eta_1, \ldots, \xi_{d-2} = \sqrt{1-t^2} \eta_{d-2}, \quad \xi_{d-1} = t,
$$
where we have switched $\xi_{d-1}$ and $\xi_d$ for convenience. The chain rule gives
$$
    \frac{\partial}{\partial \xi_i} = \frac{1}{\sqrt{1-t^2} } \frac{\partial}{\partial \eta_i}, \quad 1 \le i \le d-2,\quad
    \hbox{and} \quad   \frac{\partial}{\partial \xi_{d-1} } =
         \frac{t}{1-t^2} \sum_{j=1}^{d-2} \eta_j \frac{\partial}{\partial \eta_j}  + \frac{\partial}{\partial t},
$$
which can be used iteratively to compute  $\Delta_{0,d}$ on writing \eqref{Beltrami-explicit} as
\begin{align*}
   \Delta_{0,d} =(1-t^2)   \frac{\partial^2}{\partial \xi_{d-1}^2} & + \sum_{i=1}^{d-2}   \frac{\partial^2}{\partial \xi_i^2}
       - 2 t \sum_{i=1}^{d-2}  \xi_i   \frac{\partial^2}{\partial \xi_i \partial\xi_{d-1}} \\
     & - \sum_{i=1}^{d-2}\sum_{j=1}^{d-2} \xi_i\xi_j \frac{\partial^2}{\partial \xi_i \partial\xi_j}
     - (d-1) \sum_{i=1}^{d-1} \frac{\partial}{\partial \xi_i}.
\end{align*}
A straightforward computation and another use of \eqref{Beltrami-explicit} then leads to
$$
  \Delta_{0,d} = (1-t^2) \frac{\partial^2}{\partial t^2} - (d-1) t \frac{\partial}{\partial t} + \frac{1}{1-t^2} \Delta_{0,d-1},
$$
which is precisely \eqref{Beltrami-iterate}.
\end{proof}

The formula \eqref{Beltrami-explicit} gives an explicit expression for $\Delta_0$ in the local
coordinates of $\sph$. An explicit formula for $\Delta_0$ in terms of spherical coordinates will be
given in Section \ref{sec:spherical-coor}.

Let $\nabla = (\partial_1,\ldots,\partial_d)$. The proof of Lemma \ref{lem:Laplace-Beltrami} also shows that
\begin{equation} \label{eq:nabla-radial}
    \nabla =  \frac{1}{r} \nabla_0  +  \xi \frac{\partial}{\partial r}, \qquad x = r \xi, \quad \xi \in \sph,
\end{equation}
where $\nabla_0$ is the spherical gradient, which is the spherical part of $\nabla$ and involves only derivatives in $\xi$. Its
explicit expression can be read off from \eqref{eq:Beltrami-radial-2}. We shall not need this expression
and will be content with the following expression.  

\begin{cor}
Let $f \in C^2(\sph)$. Define $F(y):= f(y / \|y\|)$, $y \in \RR^d$. Then
\begin{equation} \label{Beltrami}
     \Delta_0 f (x) = \Delta F(x) \quad \hbox{and} \quad \nabla_0 f(x) = \nabla F(x), \quad x \in \sph.
\end{equation}
\end{cor}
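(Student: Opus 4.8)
The plan is to exploit the fact that the extension $F$ is positively homogeneous of degree zero, and then to read off both identities directly from the polar-coordinate decompositions \eqref{Beltrami-radial} and \eqref{eq:nabla-radial} established in the proof of Lemma \ref{lem:Laplace-Beltrami}.

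First I would record the elementary observation that for $y = r\xi$ with $r > 0$ and $\xi \in \sph$ one has $y/\|y\| = \xi$, so $F(r\xi) = f(\xi)$. In particular $F$ is $C^2$ on $\RR^d \setminus \{0\}$ (since $f \in C^2(\sph)$), and in the spherical--polar coordinates $x = r\xi$ the function $F$ does not depend on $r$. Hence $\frac{\partial F}{\partial r} \equiv 0$ and $\frac{\partial^2 F}{\partial r^2} \equiv 0$ on $\RR^d \setminus \{0\}$.

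Next, substituting these into \eqref{Beltrami-radial} gives $\Delta F = \frac{1}{r^2}\Delta_0 F$. Because $\Delta_0$, as given by \eqref{Beltrami-explicit}, involves only the angular variables $\xi_1,\ldots,\xi_{d-1}$ and $F(r\xi) = f(\xi)$ is independent of $r$, applying $\Delta_0$ to $F$ in these coordinates is the same as applying it to $f$; that is, $\Delta_0 F = \Delta_0 f$ along each ray. Evaluating at a point $x \in \sph$, where $r = 1$, yields $\Delta F(x) = \Delta_0 f(x)$. The gradient identity follows in exactly the same way: \eqref{eq:nabla-radial} gives $\nabla F = \frac{1}{r}\nabla_0 F + \xi \frac{\partial F}{\partial r} = \frac{1}{r}\nabla_0 F$, and since $\nabla_0$ (whose explicit form is read off from \eqref{eq:Beltrami-radial-2}) involves only angular derivatives, $\nabla_0 F = \nabla_0 f$; setting $r = 1$ gives $\nabla_0 f(x) = \nabla F(x)$ for $x \in \sph$.

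The only point requiring a little care — the ``main obstacle,'' such as it is — is to match the intrinsic meaning of $\Delta_0 f$ and $\nabla_0 f$ on $\sph$ with the coordinate expressions \eqref{Beltrami-explicit} and \eqref{eq:Beltrami-radial-2}: one should note that these operators act tangentially, so that evaluating the ambient operators $\Delta$ and $\nabla$ on the $0$-homogeneous extension $F$ produces exactly the tangential part, with the $1/r^2$ and $1/r$ factors trivialized on $\sph$. Everything else is a direct substitution.
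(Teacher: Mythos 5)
Your argument is correct and is precisely the paper's own reasoning: the paper dispenses with the corollary in one sentence, noting that it "follows immediately from \eqref{Beltrami-radial} and \eqref{eq:nabla-radial}, since $x/\|x\|$ is independent of $r$," which is exactly the $0$-homogeneity observation you elaborate. Your write-up simply fills in the substitution details (vanishing radial derivatives, evaluation at $r=1$) that the paper leaves implicit.
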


The corollary follows immediately from \eqref{Beltrami-radial} and \eqref{eq:nabla-radial}, since $x/\|x\|$ is
independent of $r$. The expressions in \eqref{Beltrami} show that $\Delta_0$ and $\nabla_0$ are independent
of the coordinates of $\sph$. In fact, we could take \eqref{Beltrami} as the definition of $\Delta_0$ and $\nabla_0$.

The usual Laplacian $\Delta$ can be expressed in terms of the dot product of $\nabla$, $\Delta = \nabla \cdot \nabla$,
which can also be written--as is often in physics textbooks--as $\Delta = \nabla^2$. The analogue of this identity
also holds on the sphere.

\begin{lem} \label{Delta=nabla2}
The Laplace--Beltrami operator satisfies
\begin{equation} \label{eq:Delta=nabla2}
    \Delta_0 = \nabla_0 \cdot \nabla_0.
\end{equation}
\end{lem}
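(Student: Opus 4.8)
The plan is to reduce the identity $\Delta_0 = \nabla_0 \cdot \nabla_0$ to the flat-space identity $\Delta = \nabla \cdot \nabla$ via the extension map $f \mapsto F$, $F(y) := f(y/\|y\|)$, using the Corollary preceding the statement. That corollary already tells us that $\Delta_0 f(x) = \Delta F(x)$ and $\nabla_0 f(x) = \nabla F(x)$ for $x \in \sph$, so the natural route is: compute $\nabla_0 \cdot \nabla_0 f$ on $\sph$, express each application of $\nabla_0$ as an application of $\nabla$ to the appropriate radial extension, and match with $\Delta F = \Delta_0 f$. The subtlety — and I expect this to be the main obstacle — is that $\nabla_0 f$ is a vector field \emph{on the sphere}, so to apply $\nabla_0$ a second time, or equivalently to apply $\nabla$ via the Corollary, one must first extend each component of $\nabla_0 f$ off the sphere in the degree-zero-homogeneous way, and one must check that $\nabla_0 \cdot \nabla_0$ is being interpreted as $\sum_i (\nabla_0)_i (\nabla_0)_i$ acting componentwise, consistently with that extension.

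Concretely, first I would fix $x \in \sph$ and write $x = r\xi$ with $r = \|y\|$, $\xi = y/\|y\|$ in a neighborhood in $\RR^d \setminus \{0\}$. From \eqref{eq:nabla-radial} we have $\nabla = \frac1r \nabla_0 + \xi \frac{\partial}{\partial r}$. Apply this to $F(y) = f(\xi)$: since $F$ is homogeneous of degree $0$, $\frac{\partial F}{\partial r} = 0$, so $\nabla F(y) = \frac1r \nabla_0 f(\xi)$, and in particular $\nabla F(x) = \nabla_0 f(x)$ on $\sph$ — recovering the second identity of the Corollary. The key structural point is that $\nabla_0 f$, viewed as a function of $\xi$ alone, has a natural degree-zero homogeneous extension $G(y) := \nabla_0 f(y/\|y\|)$, and then $\nabla F(y) = \frac1r G(y)$; this is homogeneous of degree $-1$, which is exactly the scaling that $\nabla$ produces.

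Next I would compute $\Delta F = \nabla \cdot \nabla F$ directly using \eqref{eq:nabla-radial} applied to the vector field $\nabla F = \frac1r G$. Writing $\nabla \cdot (\frac1r G) = \big(\frac1r \nabla_0 + \xi \frac{\partial}{\partial r}\big) \cdot \big(\frac1r G\big)$, the radial piece contributes $\xi \cdot \frac{\partial}{\partial r}(\frac1r G) = -\frac{1}{r^2}\, \xi \cdot G$, and the spherical piece contributes $\frac{1}{r^2} \nabla_0 \cdot G$. Now $\xi \cdot G(\xi) = \xi \cdot \nabla_0 f(\xi) = 0$ because the spherical gradient is tangent to the sphere (this is visible from the explicit formula obtainable from \eqref{eq:Beltrami-radial-2}, or simply from the fact that $\nabla_0 f = \nabla F$ restricted to $\sph$ together with $\langle x, \nabla F(x)\rangle = \frac{d}{dt}\big|_{t=1} F(tx) = 0$ by homogeneity). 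Hence $\Delta F(y) = \frac{1}{r^2} \nabla_0 \cdot G(y) = \frac{1}{r^2}(\nabla_0 \cdot \nabla_0 f)(y/\|y\|)$. Setting $r = 1$, i.e.\ restricting to $x \in \sph$, gives $\Delta F(x) = (\nabla_0 \cdot \nabla_0 f)(x)$. Combined with $\Delta_0 f(x) = \Delta F(x)$ from the Corollary, this yields \eqref{eq:Delta=nabla2}.

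The only point requiring care — the main obstacle — is making precise what "$\nabla_0 \cdot \nabla_0$" means and ensuring the vanishing $\xi \cdot \nabla_0 f = 0$ is justified within the elementary framework of this section rather than by invoking tangent-bundle language; once that tangency is pinned down (and it follows cleanly from degree-zero homogeneity of $F$), the rest is the short computation above with the chain rule \eqref{eq:Beltrami-radial-2}. An alternative, perhaps cleaner, write-up would avoid introducing $G$ by components and instead argue: by \eqref{Beltrami} both $\Delta_0$ and each component of $\nabla_0$ agree on $\sph$ with $\Delta$ and $\nabla$ applied to degree-zero extensions; since $\Delta = \nabla\cdot\nabla$ in $\RR^d$ and the radial derivative kills such extensions, the spherical identity follows by restricting to $r=1$.
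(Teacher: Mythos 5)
Your proof is correct and follows essentially the same route as the paper: both expand $\nabla\cdot\nabla$ via the decomposition \eqref{eq:nabla-radial} and use the tangency $\xi\cdot\nabla_0 f(\xi)=0$ to kill the radial cross term. The only difference is that you test the identity on the degree-zero homogeneous extension $F$, so that all $\partial/\partial r$ terms vanish and you avoid the computation $\nabla_0\cdot\xi = d-1$ that the paper needs for its operator-level comparison with \eqref{Beltrami-radial}, reaching the conclusion instead through \eqref{Beltrami}.
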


\begin{proof}
An application of \eqref{eq:nabla-radial} gives immediately
\begin{align*}
  \Delta = \nabla \cdot \nabla = \frac{1}{r^2} \nabla_0 \cdot \nabla_0 + \frac{1}{r} \nabla_0 \left(\xi \frac{\partial}{\partial r} \right)
      + \xi \frac{\partial}{\partial r} \left (\frac 1 r \nabla_0 \right) +  \frac{\partial^2 }{\partial r^2}.
\end{align*}
We note that $\xi \cdot \nabla_0 f(\xi) = 0$, since $\xi \in \sph$ is in the normal direction of $\xi$ whereas $\nabla_0 f(\xi)$,
by \eqref{eq:nabla-radial}, is on the tangent plane at $\xi$. Hence, we see that
$$
   \xi \frac{\partial}{\partial r} \left (\frac 1 r \nabla_0 \right) = - \frac{1}{r^2} \xi \cdot \nabla_0 + \frac{1}{r} \xi \cdot
     \nabla_0 \frac{\partial}{\partial r} = 0.
$$
Using \eqref{eq:nabla-radial}, a quick computation gives $\nabla_0 \cdot \xi = d-1$, so that by the product rule,
$$
 \nabla_0 \left(\xi \frac{\partial}{\partial r} \right) = \nabla_0 \cdot \xi \frac{\partial}{\partial r}  +
     \frac{\partial}{\partial r} \xi \cdot \nabla_0 = (d-1)\f {\partial}{\partial r}.
$$
Consequently, we conclude that
$$
    \Delta = \frac{\partial^2 }{\partial r^2} + \frac{d-1}{r} \frac{\partial}{\partial r} + \frac{1}{r^2} \nabla_0 \cdot \nabla_0.
$$
Comparing this with \eqref{Beltrami-radial} completes the proof.
\end{proof}

Our next result shows that the spherical harmonics are eigenfunctions of the Laplace-Beltrami operator.

\begin{thm} \label{thm:eigen-eqn-sph}  
The spherical harmonics are eigenfunctions of $\Delta_0$,
\begin{equation}\label{eigen-eqn-sph}
\Delta_0 Y (\xi) = - n(n + d - 2)Y(\xi),  \quad \forall Y  \in \CH_n^d, \quad \xi \in \sph.
\end{equation}
\end{thm}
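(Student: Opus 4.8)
The plan is to reduce the statement on the sphere to a statement in $\RR^d$ using the relation \eqref{Beltrami-radial} of Lemma \ref{lem:Laplace-Beltrami}, which expresses $\Delta$ in spherical--polar coordinates as $\Delta = \partial_r^2 + \frac{d-1}{r}\partial_r + \frac{1}{r^2}\Delta_0$. The point is that a harmonic homogeneous polynomial $Y\in\CH_n^d$ is, by definition, annihilated by $\Delta$, and its radial behaviour is completely explicit, so the $r$-derivatives in \eqref{Beltrami-radial} can be computed directly, leaving only the $\Delta_0$ term.

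First I would write $x = r\xi$ with $r>0$ and $\xi\in\sph$, and use the homogeneity of $Y$ to get $Y(x) = r^n Y(\xi)$. Then I would apply the identity \eqref{Beltrami-radial} to the function $Y$ and compute the two radial terms: $\partial_r^2 (r^n Y(\xi)) = n(n-1) r^{n-2} Y(\xi)$ and $\frac{d-1}{r}\partial_r (r^n Y(\xi)) = (d-1) n r^{n-2} Y(\xi)$, while $\frac{1}{r^2}\Delta_0 (r^n Y(\xi)) = r^{n-2}\Delta_0 Y(\xi)$ since $\Delta_0$ acts only in the $\xi$ variables. Adding these and using $\Delta Y = 0$ gives
$$
  0 = \Delta Y(x) = r^{n-2}\bigl[ n(n-1) Y(\xi) + (d-1)n Y(\xi) + \Delta_0 Y(\xi)\bigr].
$$
Dividing by $r^{n-2}$ and rearranging yields $\Delta_0 Y(\xi) = -\bigl(n(n-1) + (d-1)n\bigr) Y(\xi) = -n(n+d-2) Y(\xi)$, which is exactly \eqref{eigen-eqn-sph}. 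Since a general $Y\in\CH_n^d$ is treated by this same argument, and $\CH_n^d$ is spanned by such polynomials, the identity holds for all of $\CH_n^d$.

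There is no serious obstacle here; the only point requiring a little care is the bookkeeping of which variables $\Delta_0$ differentiates — one must be sure that, after the change of variables, $\Delta_0$ applied to $r^n Y(\xi)$ simply pulls out the factor $r^n$ because $\Delta_0$ involves only the $\xi_i$ (equivalently, one may invoke the last Corollary, \eqref{Beltrami}, writing $F(y) = Y(y/\|y\|) = \|y\|^{-n} Y(y)$ and using $\Delta_0 Y|_{\sph} = \Delta F|_{\sph}$, then computing $\Delta(\|y\|^{-n} Y(y))$ via \eqref{Delta_|x|g} with $g = Y$ and $\rho = -n$, which gives the same coefficient). Either route makes the computation completely explicit; I would present the spherical--polar version as the primary argument since \eqref{Beltrami-radial} is the most recently established tool.
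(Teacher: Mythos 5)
Your proposal is correct and follows exactly the paper's own argument: write $Y(x)=r^nY(\xi)$, apply \eqref{Beltrami-radial}, use $\Delta Y=0$, and divide by $r^{n-2}$ to isolate $\Delta_0 Y(\xi)=-n(n+d-2)Y(\xi)$. The extra care you note about $\Delta_0$ acting only in the $\xi$ variables is sound but not something the paper dwells on.
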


\begin{proof}
Let $x = r \xi$, $\xi \in \sph$. Since $Y \in \CH_n^d$ is homogeneous, $Y (x) = r^ n Y (\xi)$ and 
by \eqref{Beltrami-radial},
$$
0 = \Delta Y(x) = n(n -1)r^{n -2} Y (\xi) + (d - 1)n r^{n -2} Y (\xi) + r^{n-2} \Delta_0 Y (\xi),
$$
which is \eqref{eigen-eqn-sph} upon dividing by $r^{n-2}$.
\end{proof}

The identity \eqref{eigen-eqn-sph} also implies that $\Delta_0$ is self-adjoint, which can also be proved
directly and will be treated in the last section of this chapter, together with a number of other properties of the
Laplace-Beltrami operator.

\section{Spherical harmonics in spherical coordinates} \label{sec:spherical-coor}
\setcounter{equation}{0}

The polar coordinates $(x_1,x_2) = (r \cos \t, r\sin \t)$, $r \ge 0$, $0 \le \t \le 2 \pi$, give
coordinates for $\SS^1$ when $r =1$. The high-dimensional analogue is the spherical
polar coordinates defined by  
\begin{equation} \label{eq:sph-polar}
\begin{cases}
\quad x_{1} = r\sin \theta _{d-1}\ldots \sin \theta _{2}\sin \theta _{1},  \\
\quad x_{2} = r\sin \theta _{d-1}\ldots \sin \theta _{2}\cos \theta _{1}, \\
\quad  \cdots  \\
x_{d-1} = r\sin \theta _{d-1}\cos \theta _{d-2}, \\
\quad x_{d} =r\cos \theta _{d-1},
\end{cases}
\end{equation}
where $r\geq 0$, $0\leq \theta_1 \le  2\pi$, $0\leq \theta_i\leq \pi$ for $i =2, \ldots, d-1$.
When $r =1$ these are the coordinates for the unit sphere $\sph$, and they are in fact defined
recursively by
$$
       x = (\xi\sin \t_{d-1} , \cos \t_{d-1})  \in \sph, \qquad \xi \in \SS^{d-2}.
$$
Let $d\s = d \s_d$ be Lebesgue measure on $\SS^{d-1}$. Then it is easy to verify that
\begin{equation} \label{eq:d_sigma-recur}
       d\s_d(x) = (\sin \t_{d-1})^{d-2}  d \t_{d-1} d \s_{d-1}(\xi).
\end{equation}
Since the Lebesgue measure of $\SS^1$ is $d \t_1$, it follows by induction that
\begin{align} \label{eq:d-omega}
   d \sigma = d\sigma_d  =   \prod_{j=1}^{d-2}\left( \sin \theta_{d-j}\right)^{d-j-1}
 d\theta_{d-1}\ldots d\theta_2 d\theta _1
\end{align}
in the spherical coordinates \eqref{eq:sph-polar}. Furthermore, \eqref{eq:d_sigma-recur} shows that
\begin{equation} \label{eq:integral-d_d-1}
       \int_{\sph} f(x) d\s_d(x) = \int_0^\pi \int_{\SS^{d-2}} f(\xi\sin \t, \cos \t) d\s_{d-1}(\xi) (\sin \t)^{d-2} d\t.
\end{equation}

The orthogonality \eqref{eq:ultrasphericalOP} of the Gegenbauer polynomials can be written as
\begin{equation} \label{eq:GegenOP-trig}
    \int_0^\pi C_n^\l(\cos \t) C_m^\l (\cos\t) (\sin \t)^{d-2} d\t= \frac{\sqrt{\pi} \Gamma(\frac{d-1}{2})}{\Gamma(\frac{d}{2})}
      h_n^\l \delta_{m,n}, \quad \l = \tfrac{d-2}{2}.
\end{equation}
Together with \eqref{eq:integral-d_d-1}, this allows us to write down a basis of spherical harmonics in
terms of the Gegenbauer polynomials in the spherical coordinates.

\begin{thm} \label{thm:sph-harmonic-basis}  
For $d > 2$ and $\a \in \NN^d_0$, define
\begin{equation} \label{sph-harmonic-basis}
Y_\a (x) := [h_\a ]^{-1} r^{|\a|} g_\a (\t_1) \prod_{j=1}^{d - 2}
    (\sin \t_{d-j})^{|\a^{j +1}|} C^{\l_j}_{\a_j} (\cos \t_{d-j}),
\end{equation}
where $g_\a (\t_1) = \cos \a_{d-1} \t_1$ for $\a_d=0$, $\sin \a_{d-1} \t_1$ for $\a_d= 1$,
$|\a^j| = \a_j + \ldots +\a_{d-1}$, $\l_j = |\a^{j+1}| + (d - j - 1)/2$, and
$$
[h_\a]^2 := b_\a \prod_{j=1}^{d-2} \frac{\a_j! (\frac{d-j +1}{2})_{|\a^{j+1}|}
 (\a_j + \l_j )} {(2 \l_j )_{\a_j} ( \frac{d-j}{2})_{|\a^{j +1}|} \l_j}
$$
in which $b_\a = 2$ if $\a_{d-1} + \a_d > 0$,  while $b_\a = 1$ otherwise.
Then $\{Y_\a : |\a| = n, \a_d = 0, 1\}$ is an orthonormal basis of
$\CH_n^d$; that is, $
    \la Y_\a , Y_\beta \ra_{\sph} = \d_{\a,\b}.
$
\end{thm}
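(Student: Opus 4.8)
The plan is to argue by induction on the dimension $d$, at each step peeling off the last spherical angle $\t_{d-1}$ and using the three recursive structures already set up in this section: the coordinate recursion $x=(\xi\sin\t_{d-1},\cos\t_{d-1})$ with $\xi\in\SS^{d-2}$, the measure recursion \eqref{eq:d_sigma-recur}, and the operator recursion \eqref{Beltrami-iterate} for $\Delta_{0,d}$. The induction bottoms out at $\SS^1$, where the statement is the elementary orthonormality of $\{1,\sqrt2\cos k\t_1,\sqrt2\sin k\t_1\}$. The first step is to record the separation of variables built into \eqref{sph-harmonic-basis}: writing $\xi=(\sqrt{1-t^2}\,\eta,t)$ with $t=\cos\t_{d-1}$, $\eta\in\SS^{d-2}$, and setting $\beta:=(\a_2,\dots,\a_d)\in\NN_0^{d-1}$, a routine check of the indices shows $|\beta^{j}|=|\a^{j+1}|$, that the parameter attached to dimension $d-1$ at level $j$ equals $\l_{j+1}$, and that the $\t_1$-factors agree, so that
\[
   Y_\a(\xi)=\frac{h'_\beta}{h_\a}\,(1-t^2)^{|\a^2|/2}\,C^{\l_1}_{\a_1}(t)\,Y'_\beta(\eta),
   \qquad \l_1=|\a^2|+\tfrac{d-2}{2},
\]
where $Y'_\beta$ denotes the right-hand side of \eqref{sph-harmonic-basis} in dimension $d-1$ for the index $\beta$, of degree $|\a^2|=\a_2+\cdots+\a_{d-1}$.

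For orthonormality, \eqref{eq:d_sigma-recur} turns $\frac1{\o_d}\int_{\sph}(\cdot)\,d\s_d$ into $\frac{\o_{d-1}}{\o_d}\int_{-1}^{1}\big(\frac1{\o_{d-1}}\int_{\SS^{d-2}}(\cdot)\,d\s_{d-1}\big)(1-t^2)^{(d-3)/2}\,dt$. Inserting the factorization for $Y_\a$ and $Y_\gamma$ (with $\beta'$ formed from $\gamma$ as $\beta$ from $\a$), the inner integral is $\delta_{\beta,\beta'}$ by the induction hypothesis; when $\beta=\beta'$ one has $\a_i=\gamma_i$ for all $i\ge2$, so $|\a^2|=|\gamma^2|$, $\l_1=\l'_1$, and the surviving $t$-integral is exactly the Gegenbauer orthogonality integral \eqref{eq:ultrasphericalOP} with parameter $\l_1$, which vanishes unless $\a_1=\gamma_1$. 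It then remains to verify that $[h_\a]^2$ is precisely the constant rescaling the diagonal term to $1$: the one-step constant --- the Gegenbauer $L^2$-norm $h^{\l_1}_{\a_1}$ from Appendix~B, together with $\o_{d-1}/\o_d$ written out via \eqref{surface-area} --- multiplied by $[h'_\beta]^2$ must telescope into the displayed product, which is where the factors $\frac{\a_j!(\frac{d-j+1}{2})_{|\a^{j+1}|}(\a_j+\l_j)}{(2\l_j)_{\a_j}(\frac{d-j}{2})_{|\a^{j+1}|}\l_j}$ and the boundary constant $b_\a$ originate. I expect this Pochhammer bookkeeping, rather than anything conceptual, to be the main labor of the proof, though it is purely mechanical.

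For membership in $\CH_n^d$, by induction $P'_\beta(y):=\|y\|^{|\a^2|}Y'_\beta(y/\|y\|)$ is a homogeneous harmonic polynomial of degree $|\a^2|$ on $\RR^{d-1}$, and then, writing $x=(x',x_d)$, one finds $\|x\|^{n}Y_\a(x/\|x\|)=\frac{h'_\beta}{h_\a}\big(\|x\|^{\a_1}C^{\l_1}_{\a_1}(x_d/\|x\|)\big)P'_\beta(x')$, a homogeneous polynomial of degree $n$ --- here the parity of $C^{\l_1}_{\a_1}$ is what makes $\|x\|^{\a_1}C^{\l_1}_{\a_1}(x_d/\|x\|)$ a polynomial, and one must keep track of which powers of $\sin\t_{d-j}$ cancel the half-integer powers of $\|x\|$. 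By \eqref{Beltrami-radial} this polynomial is harmonic once $\Delta_{0,d}Y_\a=-n(n+d-2)Y_\a$, and the latter follows from \eqref{Beltrami-iterate}, the inductive eigenrelation $\Delta_{0,d-1}Y'_\beta=-|\a^2|(|\a^2|+d-3)Y'_\beta$, and the Gegenbauer differential equation (Appendix~B) applied to $(1-t^2)^{|\a^2|/2}C^{\l_1}_{\a_1}(t)$; thus $Y_\a\in\CH_n^d$. Finally the number of admissible $\a$ --- those for which $Y_\a\not\equiv0$ --- equals $\dim\P_n^{d-1}+\dim\P_{n-1}^{d-1}=\dim\CH_n^d$ by \eqref{eq:dimHnd}, exactly as in the proof of Theorem~\ref{thm:monic}, so the orthonormal family $\{Y_\a\}$ spans $\CH_n^d$ and is a basis.
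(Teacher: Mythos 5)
Your argument is sound and reaches the right conclusion, but it follows a genuinely different route from the paper's. The printed proof is direct and non-inductive: it establishes $Y_\a\in\P_n^d$ by rewriting \eqref{sph-harmonic-basis} in Cartesian form through $\cos\t_k=x_{k+1}/\sqrt{x_1^2+\cdots+x_{k+1}^2}$ and the parity of $C_n^\l$, and it obtains orthogonality by factoring the spherical integral into a product of $d-1$ one-dimensional integrals in a single stroke via \eqref{eq:integral-d_d-1} and \eqref{eq:d-omega}, each disposed of by Gegenbauer or trigonometric orthogonality; harmonicity is left implicit (it follows from the proposition in Section~\ref{sec:1.1} that a homogeneous polynomial of degree $n$ orthogonal to all lower degrees is harmonic, since the same computation shows $Y_\a$ is orthogonal to the lower-degree $Y_{\a'}$, which span $\Pi_{n-1}(\sph)$). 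Your dimensional induction replaces the one-stroke factorization by the single-step recursions \eqref{eq:d_sigma-recur} and \eqref{Beltrami-iterate}, and it buys an explicit verification of harmonicity through the eigenvalue identity $\Delta_{0,d}Y_\a=-n(n+d-2)Y_\a$ combined with \eqref{Beltrami-radial}, at the price of having to invoke the differential equation satisfied by $(1-t^2)^{m/2}C_k^{m+(d-2)/2}(t)$; it also explains structurally why $[h_\a]^2$ has the telescoping product form. One point needs care: the lower-dimensional index is $\b=(\a_2,\dots,\a_d)$, whose degree is $|\b|=\a_2+\cdots+\a_d$, and this exceeds $|\a^2|=\a_2+\cdots+\a_{d-1}$ by one when $\a_d=1$; the exponent of $(1-t^2)$ in your separated factor and the eigenvalue you feed into \eqref{Beltrami-iterate} must both be taken with respect to $|\b|$, or the degree count in the harmonicity step is off by one. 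This wrinkle is inherited from the indexing conventions of the statement itself and does not undermine your approach.
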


\begin{proof}
To see that $Y_\a$ is a homogeneous polynomial, we use, by \eqref{eq:sph-polar}, the relation
$
 \cos \t_k = {x_{k+1}}/{\sqrt{x_1^2+\ldots + x_{k+1}^2}}
$
for $1 \le k \le d-1$ to rewrite \eqref{sph-harmonic-basis} as
$$
Y_\a (x) = [h_\a]^{-1} g(x) \prod_{j=1}^{d-2} (x_1^2 + \ldots + x_{d-j+1}^2)^{\a_j/2} C_{\a_j}^{\l_j}
\bigg( \frac{x_{d-j+1}}{\sqrt{x_1^2 + \ldots +x_{d-j+1}^2} }\bigg)
$$
where $g(x) = \rho^{\a_{d-1}} \cos \a_{d-1} \t_1$ for $\a_d =0$, $\rho^{\a_{d-1}} \cos \a_{d-1} \t_1$
for $\a_d =1$, with $\rho = \sqrt{x_1^2+x_2^2}$. Since $x_1 = \rho \sin \t_1$ and $x_2 = \rho \cos \t_1$
by \eqref{eq:sph-polar}, $g(x)$ is either the real part or the imaginary part of $(x_2 + i x_1)^{\a_{d-1}}$,
which shows that it is a homogeneous polynomial of degree $\a_{d-1}$ in $x$. Since $C_n^\l(t)$
is even when $n$ is even and odd when $n$ is odd, we see that $Y_\a \in \P_n^d$. Using
\eqref{eq:integral-d_d-1}, we see that
\begin{align*}
   \la Y_\a, Y_{\a'} \ra_{\sph} = & \frac{h_\a^{-1} h_{\a'}^{-1}}{\o_d} \int_{0}^{2\pi} g_\a(\t_1) g_{\a'}(\t_1) d\t_1 \\
   &\times   \prod_{j=1}^{d-2} \int_{0}^\pi C_{\a_j}^{\l_j}(\cos \t_{d-j})C_{\a_j'}^{\l_j}(\cos \t_{d-j})
         (\sin \t_{d-j})^{2\l_j} d\t_{d-j}
 \end{align*}
from which the orthogonality follows from the orthogonality of the Gegenbauer polynomials
\eqref{eq:integral-d_d-1} and that of $\cos m \t$ and $\sin m \t$ on $[0, 2 \pi)$, and the
formula for $h_\a$ follows from the normalization constant of the Gegenbauer polynomial. 
\end{proof}

For $d =2$ and the polar coordinates $(x_1, x_2) = (r\cos \t, r\sin \t)$, it is easy to see that $\nabla_0 = \partial_\t$,
where $\partial_\t = \partial /\partial \t$. Hence by \eqref{eq:Delta=nabla2}, the Laplace-Beltrami operator for
$d =2$ is $\Delta_0 = \partial_\t^2$. Using \eqref{Beltrami-iterate} iteratively, we see that the Laplace-Beltrami operator
$\Delta_0$ has an explicit formula in the spherical coordinates \eqref{eq:sph-polar},
\begin{align} \label{Beltrami-spherical}
\Delta_0 = &\, \frac{1}{\sin^{d-2}\t_{d-1}} \frac{\partial}{\partial \t_{d-1}} \left[
  \sin^{d-2} \t_{d -1} \frac{\partial}{\partial \t_{d-1}} \right] \\
    & + \sum_{j =1}^{d-2} \frac{1}{\sin^2 \t_{d-1} \cdots \sin^2 \t_{j +1} \sin^{j-1} \t_j }
    \frac{\partial} {\partial \t_j} \left[ \sin^{j-1}
    \t_j  \frac{\partial} {\partial \t_j} \right].\notag
\end{align}

\section{Spherical harmonics in two and three variables} \label{sec:sph-harmonic-d=3}
\setcounter{equation}{0}

Since spherical harmonics in two and three variables are used most often
in applications, we state their properties in this section.

\subsection{Spherical harmonics in two variables}  
For $d = 2$, $\dim \CH_n^2 = 2$. An orthogonal basis of $\CH_n^2$ is
given by the real and imaginary parts of $(x_1 + i x_2)^n$, since both are
homogeneous of degree $n$ and are harmonic as the real and imaginary
parts of an analytic function. In polar coordinates $(x_1, x_2) = (r \cos \t, r \sin \t)$
of $\RR^2$, this basis is given by
\begin{equation} \label{sph-harmon-d=2}
   Y_n^{(1)} (x) = r^n \cos n \theta, \qquad Y_n^{(2)}(x) = r^n \sin n \theta.
\end{equation}
Hence, restricting to the circle $\SS^1$, the spherical harmonics are precisely
the cosine and sine functions. In particular, spherical harmonic expansions on $\SS^1$
are the classical Fourier expansions in cosine and sine functions.

As homogeneous polynomials, the basis \eqref{sph-harmon-d=2} is given
explicitly in terms of the Chebyshev polynomials $T_n$ and $U_n$ 
defined by
$$
    T_n(t) = \cos n \t \quad \hbox{and} \quad U_n(t) = \frac{\sin (n+1)\t}{\sin \t}, \quad \hbox{where}\quad t = \cos \t,
$$
which are related to the Gegenbauer polynomials:  $U_n(t) = C_n^1(t)$ and
$$
       \lim_{\l \to 0+} \frac{1}{\l} C_n^\l (x) = \frac{2}{n} T_n (x).
$$
The basis in \eqref{sph-harmon-d=2} can be rewritten then as
\begin{equation} \label{sph-harmon-d=2B}
  Y_n^{(1)}(x) = r^n T_n \left(\frac{x_1}{r}\right), \quad  Y_n^{(2)}(x) = r^{n-1} x_2 U_{n-1} \left(\frac{x_1}{r}\right),
\end{equation}
which shows explicitly that these are homogeneous polynomials since  $r = \sqrt{x_1^2+x_2^2}$ and
both $T_n(t)$ and $U_n(t)$ are even if $n$ is even, odd if $n$ is odd.

When $d = 2$, the zonal polynomial is given by $T_n(\la x,y\ra) = \cos n (\t-\phi)$ and
the addition formula \eqref{addition_formula} becomes, by \eqref{sph-harmon-d=2},
the addition formula
$$
  \cos n\t \cos n\phi + \sin n \t \sin n \phi = \cos n (\t - \phi).
$$
The expression  \eqref{Beltrami-radial} of the Laplace operator in polar coordinates becomes
$$
      \Delta = \frac{d^2}{dr}+ \frac{1}{r} \frac{d}{dr} + \frac{1}{r^2} \frac{d^2}{d \t^2};
$$
in particular, the Laplace--Beltrami operator on $\SS^1$ is simply $\Delta_0 =  d^2 / d \t^2$.

\subsection{Spherical harmonics in three variables} 
The space $\CH_n^3$ of spherical harmonics of degree $n$ has dimension $2n + 1$.
For $d =3$ the spherical polar coordinates \eqref{eq:sph-polar} are written as
\begin{equation} \label{spherical-polar-d=3}
\begin{cases}
   x_1 = r \sin \t \sin \phi, \\
   x_2 = r \sin \t \cos \phi, \\
   x_3 = r \cos \t,
\end{cases}, \quad  0 \le \t \le \pi, \quad 0 \le \phi < 2\pi, \quad r > 0.
\end{equation}
The surface area of $\SS^2$ is $4 \pi$ and the integral over $\sph$ is parameterized by
\begin{equation} \label{integral-sph-d=3}
  \int_{\SS^2} f(x) d\s = \int_0^\pi \int_0^{2 \pi} f (\sin\t \sin \phi, \sin\t \cos \phi, \cos \t) d\phi \sin\t d\t.
\end{equation}
The orthogonal basis \eqref{sph-harmonic-basis} in spherical coordinates becomes
\begin{equation}\label{sph-harmonic-basis-d=3}
\begin{split}
   Y_{k,1}^n (\t, \phi) & = (\sin \t)^k C_{n-k}^{k+ \frac12}(\cos \t) \cos k\phi, \quad 0 \le  k \le  n,\\
  Y_{k,2}^n (\t, \phi)  & = (\sin \t)^k C_{n-k}^{k+ \frac12}(\cos \t)  \sin k \phi, \quad 1 \le k \le n.
\end{split}
\end{equation}
Their $L^2 (\SS^2)$ norms can be deduced from \eqref{sph-harmonic-basis}. This basis is often written
in terms of the associated Legendre polynomials $P_n^k(t)$ defined by
$$
   P_n^k(x):= (-1)^n (1-x^2)^{k/2} \frac{d^k}{dx^k} P_n(x) = (2k-1)!! (-1)^n (1-x^2)^{k/2} C_{n-k}^{k+1/2}(x),
$$
where $P_n(t) = C_n^0(t)$ denotes the Legendre polynomial of degree $n$ (see Appendix B for
properties of $P_n$ and $P_n^k$), and in terms of $\{e^{ik\phi}, e^{-ik \phi} \}$
instead of $\{\cos k\phi, \sin k\phi \}$. In this way an orthonormal basis of $\CH_n^3$ is given by
\begin{equation} \label{ON-basis-d=3}
  Y_{k,n} (\t, \phi) = \( \frac{(2n + 1)(n- k)!}{(n + k)!} \)^{1/2}P_n^k (\cos \t) e^{ik \phi},
    \quad   - n \le  k \le n.
\end{equation}
The addition formula \eqref{addition_formula} then reads, assuming that $x$ and $y$ have spherical
coordinates $(\t, \phi)$ and $(\t' , \phi')$, respectively,
\begin{equation}\label{addition_d=3}
   \sum_{k= -n}^n  Y_{k,n} (\t, \phi)Y_{k,n} (\t', \phi') = (2n + 1)P_n (\la x, y \ra).
\end{equation}
In terms of the coordinates \eqref{spherical-polar-d=3}, the Laplace--Beltrami operator is given by
 \begin{equation}\label{Beltrami_d=3}
\Delta_0 = \frac{1}{\sin \t} \frac{\partial}{\partial \t} \( \sin \t  \frac{\partial}{\partial \t} \) +
    \frac{1}{\sin^2 \t} \frac{\partial^2}{\partial \phi^2}
\end{equation}
as seen from \eqref{Beltrami-spherical}.

\section{Representation of the rotation group} \label{sec:gp-reps}
\setcounter{equation}{0}

In this section we show that the representation of the group $SO(d)$ in spaces
of harmonic polynomials is irreducible.  

A representation of $SO(d)$ is a homomorphism from $SO(d)$ to the
group of nonsingular continuous linear transformations of $L^2
(\sph)$. We associate with each element $Q \in SO(d)$ an operator
$T (Q)$ in the space of $L^2(\sph)$, defined by   
\begin{equation}\label{left_rep}
  T (Q)f (x) = f (Q^{-1} x), \qquad x \in \sph.
\end{equation}
Evidently, for each $Q \in SO(d)$, $T(Q)$ is a nonsingular linear transformation of
$L^2 (\sph)$ and $T$ is a homomorphism,
$$
T (Q_1 Q_2 ) = T (Q_1 )T (Q_2 ), \quad \forall Q_1 , Q_2 \in SO(d).
$$
Thus, $T $ is a representation of $SO(d)$. Since $d\s$ is invariant
under rotations, $\|T(Q)f \|_2 = \|f\|_2$ in the $L^2(\sph)$ norm,
so that $T (Q)$ is unitary.

A linear space $\U$ is called invariant under $T$ if $T (Q)$
maps $\U$ to itself for all $Q \in SO(d)$. The null space and $L^2(\sph)$ itself are
trivial invariant subspaces. A representation $T$ is called irreducible if it has only trivial
invariant subspaces. The space $\CH_n^d$ is an invariant subspace of $T$ in
\eqref{left_rep}.

Let $T_{n,d}$ denote the representation of $SO(d)$ corresponding to $T$
in the invariant subspace $\CH_n^d$. We want to show that $T_{n,d}$ is irreducible.

\begin{lem}\label{lem:harmonic_xd}
A spherical harmonic $Y \in \CH_n^d$ is invariant under all rotations in
$SO(d)$ that leaves $x_d$ fixed if and only if
\begin{equation} \label{eq:harmonic_xd}
 Y (x) = c \|x\|^n C_n^\l \Big (\frac{x_d}{\|x\|}\Big), \qquad \l = \frac{d -2}{2}.
\end{equation}
where $c$ is a constant.
\end{lem}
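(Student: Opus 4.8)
The plan is to exploit the fact that a rotation fixing $x_d$ is precisely an element of $SO(d-1)$ acting on the first $d-1$ coordinates, together with the decomposition of $\CH_n^d$ into tensor products with lower-dimensional harmonics coming from the recursion implicit in the spherical-coordinate basis of \thmref{thm:sph-harmonic-basis}. First I would expand $Y$ in the orthonormal basis $\{Y_\a : |\a|=n,\ \a_d=0,1\}$ of \eqref{sph-harmonic-basis}, written in the form $Y_\a(x) = [h_\a]^{-1} r^{|\a|}(\sin\t_{d-1})^{|\a^2|} C^{\l_1}_{\a_1}(\cos\t_{d-1})\, \widetilde Y_{\a'}(\xi)$, where $\xi\in\SS^{d-2}$ carries the coordinates $\t_1,\dots,\t_{d-2}$ and $\widetilde Y_{\a'}$ ranges over an orthonormal basis of $\CH_m^{d-1}$ for the appropriate $m=|\a^2|$. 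Writing $x = (r\sin\t_{d-1}\,\xi,\, r\cos\t_{d-1})$ with $\xi\in\SS^{d-2}$, a rotation $Q\in SO(d)$ fixing $x_d$ acts only on $\xi$, leaving $r$ and $\t_{d-1}$ untouched. Hence $Y$ is invariant under all such $Q$ if and only if, in this expansion, only the terms with $\widetilde Y_{\a'}$ equal to a constant (i.e.\ $m=0$, the one-dimensional space $\CH_0^{d-1}$) survive; every other isotypic component of $\CH_m^{d-1}$ under $SO(d-1)$ must vanish, because a nonconstant harmonic on $\SS^{d-2}$ cannot be $SO(d-1)$-invariant.

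The $m=0$ terms correspond exactly to $\a' = 0$, i.e.\ $\a = \a_1 e_{d-1}$ with $\a_1 = n$, leaving a single surviving basis element proportional to $r^n (\sin\t_{d-1})^0 C^{\l}_{n}(\cos\t_{d-1}) = \|x\|^n C_n^\l(x_d/\|x\|)$ with $\l = (d-1-1)/2 = (d-2)/2$; here I use that $|\a^2| = 0$ forces $\l_1 = (d-2)/2$. This gives \eqref{eq:harmonic_xd} with a single free constant $c$. Conversely, the function in \eqref{eq:harmonic_xd} is manifestly invariant under any $Q$ fixing $x_d$, since it depends on $x$ only through $\|x\|$ and $x_d$, and it lies in $\CH_n^d$ by \thmref{thm:sph-harmonic-basis} (or directly: $\|x\|^n C_n^\l(x_d/\|x\|)$ is the monic-type zonal harmonic with pole $e_d$, which by \thmref{thm:zonal} is a constant multiple of $Z_n(x/\|x\|, e_d)$ extended homogeneously, hence harmonic).

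An alternative, cleaner route that avoids coordinates: by averaging $Y$ over the subgroup $H\cong SO(d-1)$ fixing $x_d$ with respect to Haar measure, one obtains an $H$-invariant element $\overline Y \in \CH_n^d$; when $Y$ is already $H$-invariant, $\overline Y = Y$. So it suffices to show the space of $H$-invariants in $\CH_n^d$ is at most one-dimensional and contains $\|x\|^n C_n^\l(x_d/\|x\|)$. The latter containment is immediate. For the dimension bound, restrict to $\sph$ and note that an $H$-invariant function on $\sph$ depends only on $x_d = \cos\t_{d-1}$ (the orbit of $H$ on $\sph$ through a point is the latitude sphere $\{x_d = \text{const}\}$); thus an $H$-invariant spherical harmonic of degree $n$ is determined by a one-variable function of $t = x_d \in [-1,1]$, and plugging $Y(\xi) = \|x\|^n f(x_d/\|x\|)|_{\sph}$ into the eigenvalue equation \eqref{eigen-eqn-sph} for $\Delta_0$, using the recursion \eqref{Beltrami-iterate}, reduces it to the Gegenbauer differential equation for $f$ with parameter $\l = (d-2)/2$; its only polynomial solution of degree $n$ is a multiple of $C_n^\l$. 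The main obstacle is the bookkeeping in identifying the right Gegenbauer parameter and in the claim that the only $SO(d-1)$-invariant harmonics on $\SS^{d-2}$ are the constants; the latter follows from \thmref{cor:O(d)-basis} together with the fact that $\dim \CH_0^{d-1} = 1$ exhausts the trivial isotypic component — a point that is essentially the base case of the irreducibility argument being developed in this section, so some care is needed to avoid circularity (one can instead invoke that a nonconstant harmonic on $\SS^{d-2}$ has a nonconstant restriction and cannot be fixed by the transitive action of $SO(d-1)$ on $\SS^{d-2}$).
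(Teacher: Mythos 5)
Your argument is correct, but it follows a genuinely different route from the paper. The paper's proof never leaves the realm of homogeneous polynomials: it observes that a polynomial invariant under all rotations fixing $x_d$ must have the form $\sum_{0\le j\le n/2} c_j\, x_d^{n-2j}\|x\|^{2j}$, imposes $\Delta Y=0$ to obtain a two-term recurrence for the coefficients $c_j$, solves it in closed form, and recognizes the resulting hypergeometric sum as $C_n^\l$ via \eqref{chapt1-Gegen-2F1}; the converse direction is then immediate. Your two routes substitute structural input for this computation: the first decomposes $Y$ along the recursive spherical-coordinate basis of Theorem~\ref{thm:sph-harmonic-basis} and uses transitivity of $SO(d-1)$ on $\SS^{d-2}$ to kill every component with a nonconstant lower-dimensional factor, while the second averages over the stabilizer and characterizes $C_n^\l$ as the unique (up to scalar) degree-$n$ polynomial solution of the Gegenbauer equation extracted from \eqref{eigen-eqn-sph} and \eqref{Beltrami-iterate}. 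What the paper's computation buys is self-containedness: it needs only the Laplacian and the explicit series \eqref{chapt1-Gegen-2F1}, not the basis of Section~\ref{sec:spherical-coor} nor the uniqueness of polynomial solutions of a second-order ODE, and its recurrence is in effect the coefficient form of the very ODE you derive in your second route. What your version buys is a transparent representation-theoretic picture (the lemma says exactly that the $SO(d-1)$-fixed subspace of $\CH_n^d$ is one-dimensional), and you are right to flag and then defuse the potential circularity: transitivity of $SO(d-1)$ on $\SS^{d-2}$, not irreducibility, is what forces invariant harmonics of positive degree to vanish. Note that all three arguments require essentially the same invariant-theoretic seed in some guise (the paper's unproved opening display, your claim that $H$-invariant functions on $\sph$ depend only on $x_d$). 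One bookkeeping slip: in the indexing of \eqref{sph-harmonic-basis} the surviving multi-index in your first route is $\a=n e_1$ rather than $\a = \a_1 e_{d-1}$, since the Gegenbauer factor in $\cos\t_{d-1}$ has degree $\a_1$; this does not affect the argument.
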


\begin{proof}
If $Y$ is invariant under rotations that fix $x_d$ and is a homogeneous
polynomial of degree $n$, then it can be written as
$$
Y (x) =  \sum_{0 \le j\le n/2} b_j x_d^{n - 2j} (x_1^2 + \ldots + x_{d-1}^2)^j =
\sum_{0 \le j \le n/2} c_j x_d^{n-2j} \|x\|^{2j}.
$$
where the second equal sign follows from expanding $(\|x\|^2-
x_d^2)^j$ and changing the order of summation. Since $Y$ is
harmonic, computing $\Delta Y (x) = 0$ shows that $c_j$ satisfies
the recurrence relation
$$
   4(j + 1)(n - j - 1)c_{j +1} + (n - 2j )(n - 2j - 1)c_j = 0.
$$
Solving the recurrence equation for $c_j$ we conclude that
$$
Y (x) = c_0 \sum_{0 \le j \le n/2} \frac{ (- \frac{n}{2})_j ( \frac{1-n}{2} )_j}
    {j !(1- n-d -2)_j} x_d^{n-2j} \|x\|^{2j}.
$$
Consequently, \eqref{eq:harmonic_xd} follows from the formula
\eqref{Gegen-2F1} for Gegenbauer polynomials. Since the function
$Y$ in \eqref{eq:harmonic_xd} is clearly invariant under all
rotations that fix $x_d$ and we have just shown that it is
harmonic, the proof is complete.
\end{proof}

\begin{thm} \label{thm:irreducible}
The representation $T_{n,d}$ of $SO(d)$ on $\CH_n^d$ is irreducible.
\end{thm}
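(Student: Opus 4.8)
The plan is to show that any nonzero $SO(d)$-invariant subspace $\U \subseteq \CH_n^d$ must be all of $\CH_n^d$. The key idea is that invariance plus the reproducing structure forces $\U$ to contain the zonal harmonic at some point, and the zonal harmonics at all points span $\CH_n^d$.

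\medskip

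\noindent\textbf{Step 1: Produce a zonal element.} Suppose $\U \ne \{0\}$ is invariant under $T_{n,d}$. Pick $0 \ne Y \in \U$ and a point $e \in \sph$ with $Y(e) \ne 0$; by rotating we may take $e = e_d = (0,\ldots,0,1)$. The idea is to average $Y$ over the subgroup $H \cong SO(d-1)$ of rotations fixing $x_d$: using the normalized Haar measure $d\mu$ on $H$, set
$$
   \widetilde Y(x) := \int_H Y(Q^{-1} x)\, d\mu(Q) = \int_H T(Q) Y(x)\, d\mu(Q).
$$
Because $\U$ is closed (finite-dimensional) and invariant, $\widetilde Y \in \U$; because $T(Q)Y(e_d) = Y(Q^{-1}e_d) = Y(e_d)$ for $Q \in H$, we get $\widetilde Y(e_d) = Y(e_d) \ne 0$, so $\widetilde Y \ne 0$. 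By construction $\widetilde Y$ is fixed by all of $H$, so Lemma~\ref{lem:harmonic_xd} applies and gives
$$
   \widetilde Y(x) = c\, \|x\|^n C_n^\l\Big(\frac{x_d}{\|x\|}\Big), \qquad c \ne 0,
$$
i.e. $\U$ contains a (nonzero multiple of a) zonal harmonic centered at $e_d$.

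\medskip

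\noindent\textbf{Step 2: Spread it around and span.} Applying $T(Q)$ for arbitrary $Q \in SO(d)$ and using invariance of $\U$ again, $\U$ contains $\|x\|^n C_n^\l(\la x/\|x\|, \eta\ra)$ for every $\eta \in \sph$; restricting to the sphere, $\U|_{\sph}$ contains $C_n^\l(\la \cdot, \eta\ra)$ for all $\eta$, hence the zonal harmonic $Z_n(\cdot,\eta)$ for every $\eta \in \sph$ (using \eqref{zonal}). But the reproducing property \eqref{zonal-defn} shows that if some $p \in \CH_n^d$ were orthogonal to every $Z_n(\cdot,\eta)$ then $p(\eta) = \frac1{\o_d}\la Z_n(\cdot,\eta), p\ra_{\sph} = 0$ for all $\eta$, so $p = 0$; equivalently, one can invoke Theorem~\ref{thm:zonal-basis} or the fundamental-system lemma to select $N = \dim\CH_n^d$ points $\eta_1,\ldots,\eta_N$ for which $\{Z_n(\cdot,\eta_i)\}$ is already a basis. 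Either way $\sspan\{Z_n(\cdot,\eta):\eta\in\sph\} = \CH_n^d$, so $\U = \CH_n^d$, and $T_{n,d}$ is irreducible.

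\medskip

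\noindent\textbf{Main obstacle.} The substantive point is the averaging argument in Step~1: one must be sure that integrating $T(Q)Y$ over the compact subgroup $H$ against Haar measure keeps the result inside $\U$ (immediate since $\U$ is a finite-dimensional, hence closed, subspace, and the integrand is a continuous $\U$-valued function) and, more importantly, that the averaged function is not identically zero — which is exactly why we arranged the base point $e_d$ to lie on the rotation axis, so that the value at $e_d$ is unaffected by the averaging. Everything after that is a clean application of Lemma~\ref{lem:harmonic_xd} together with the fact, already available from Section~\ref{sec:FundamentalSystem}, that zonal harmonics span $\CH_n^d$. (If one prefers to avoid Haar measure on $SO(d-1)$ entirely, an alternative is to apply $\proj_n$ to the continuous average over $H$ of the already-harmonic translates, but the group-averaging formulation is cleanest.)
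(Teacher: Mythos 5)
Your argument is correct, but it takes a genuinely different route from the one in the text, in both of its halves. To produce a zonal element of $\U$, the text forms the kernel $\sum_{j=1}^M Y_j(x)Y_j(y)$ from an orthonormal basis of $\U$, notes (as in the proof of Lemma~\ref{lem:zonal}) that invariance of $\U$ forces this kernel to depend only on $\la x,y\ra$, and specializes $y=e_d$; you instead average a single $Y\in\U$ with $Y(e_d)\ne 0$ over the stabilizer $H\cong SO(d-1)$ against Haar measure. Your route makes the nonvanishing of the resulting zonal element immediate, since $\widetilde Y(e_d)=Y(e_d)\ne 0$ — a point the text leaves implicit (there one must observe that $F(1)=\sum_j Y_j(y)^2$ integrates over $\sph$ to $M>0$); the cost is that you invoke Haar measure on $SO(d-1)$ and vector-valued integration, which the chapter has not set up, whereas the kernel construction is pure finite-dimensional linear algebra already available from Lemma~\ref{lem:zonal1}. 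To finish, the text argues by contradiction: $\U^\perp$ is also invariant, so both $\U$ and $\U^\perp$ would contain the same nonzero zonal harmonic, contradicting $\U\cap\U^\perp=\{0\}$. You conclude directly, rotating the zonal element to obtain $Z_n(\cdot,\eta)\in\U$ for every $\eta\in\sph$ and using the reproducing property \eqref{zonal-defn} to see that these span $\CH_n^d$; this is a clean, positive argument that also records the useful fact that the zonal harmonics span the whole space. Both proofs rest on Lemma~\ref{lem:harmonic_xd} in exactly the same way (and both implicitly assume $d\ge 3$, since for $d=2$ the stabilizer of $e_d$ in $SO(2)$ is trivial and that lemma degenerates).
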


\begin{proof}
Assume that $\U$ is an invariant subspace of $\CH_n^d$ and $\U$ is
not a null space. Let $\{Y_j : 1 \le j \le M \}$, $M \le \dim
\CH_n^d$, be an orthonormal basis of $\U$. Following the proof of
Lemma \ref{lem:zonal}, there is a polynomial $F(t)$ of one
variable such that $\sum_{j =1}^M Y_j (x)Y_j (y) = F ( \la x, y
\ra)$. In particular, setting $y = e_d = (0, . . . , 0, 1)$ shows
that $F ( \la x, e_d \ra)$ is in $\CH_n^d$ and it is evidently
invariant under rotations in $SO(d)$ that fix $x_d$. Hence, by
Lemma \ref{lem:harmonic_xd} $F (\la x, e_d \ra) = c \|x\|^n C_n^\l
(\frac{x_d}{\|x\|})$. In particular, this shows that $\|x\|^n
C_n^\l (\frac{x_d}{\|x\|}) \in \U$. On the other hand, let
$\U^\perp$ denote the orthogonal complement of $\U$ in $\CH_n^d$.
If $f \in \U^\perp$ and $g \in \U$, then $\la T(Q)f , g \ra_{\sph}
= \la f , T(Q^{-1})g \ra_{\sph}  = 0$, which shows that $\U^\perp$
is also an invariant subspace of $\CH_n^d$. Applying the same
argument as for $\U$ shows then $\|x\|^n C_n^\l
(\frac{x_d}{\|x\|}) \in \U^\perp$, which contradicts to $\U \cap \U^\perp = \{0\}$.
Thus, $\U$ must be
trivial.
 \end{proof}

\section{Angular derivatives and the Laplace--Beltrami operator} \label{sec:Beltrami-Dij}
\setcounter{equation}{0}

Consider the case $d = 2$ and the polar coordinates $(x_1, x_2) = (r\cos \t, r\sin \t)$. Let $\partial_r = \partial/\partial r$
and $\partial_\t = \partial /\partial \t$, while we retain $\partial_1$ and $\partial_2$ for the partial derivatives with respect
to $x_1$ and $x_2$. It then follows that
$$
  \partial_1  = \cos \t \partial_r - \frac{\sin \t}{r} \partial_\t, \quad  \partial_2  = \sin \t \partial_r +  \frac{\cos \t}{r} \partial_\t.
$$
From these relations it follows easily that the angular derivative $\partial_\t$ can also be written as
$\partial_\t  = x_1 \partial_2 - x_2 \partial_1$ and the operator $\Delta_0$ is $\Delta_0 = \partial_\t^2$.
We introduce angular derivatives in higher dimensions as follows. 

\begin{defn} \label{def:Dij} 
For $x \in \RR^d$ and $1 \le i \ne j \le d$, define
\begin{equation} \label{defn:Dij}
        D_{i,j} : = x_i \partial_j - x_j \partial_i = \frac{\partial}{\partial \t_{i,j}},
\end{equation}
where $\t_{i,j}$ is the angle of polar coordinates in the $(x_i,x_j)$-plane, defined by $(x_i,x_j) = r_{i,j}(\cos \t_{i,j}, \sin \t_{i,j})$,
$r_{i,j} \ge 0$ and $0 \le \t_{i,j} \le 2 \pi$.
\end{defn}

By its definition with partial derivatives on $\RR^d$,  $D_{i,j}$ acts on $\RR^d$, yet the second
equality in \eqref{defn:Dij} shows that it acts on the sphere $\sph$. Thus, for $f$ defined on $\RR^d$,
\begin{equation}\label{Dij-RdSd}
    (D_{i,j} f)(\xi) = D_{i,j} [ f(\xi)], \qquad  \xi  \in \sph,
\end{equation}
where the right-hand side means that $D_{i,j}$ is acting on $f(\xi)$.

Since $D_{j,i} = - D_{i,j}$, the number of distinct operators $D_{i,j}$ is $\binom{d}{2}$.
The operator $\Delta_0$ can be decomposed in terms of them.

\begin{thm} \label{lem: Laplace-Beltrami2} 
On $\sph$, $\Delta_0$ satisfies the decomposition
\begin{equation}\label{Beltrami2}
           \Delta_0 = \sum_{1 \le i<j \le d} D_{i,j}^2.
\end{equation}
\end{thm}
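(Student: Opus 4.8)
The plan is to compute the second-order operator $\sum_{i<j} D_{i,j}^2$ directly in Cartesian coordinates and show that, when restricted to the sphere via \eqref{Dij-RdSd}, it agrees with the expression \eqref{Beltrami-explicit} for $\Delta_0$. First I would expand a single square: using $D_{i,j} = x_i\partial_j - x_j\partial_i$ and the product rule,
$$
  D_{i,j}^2 = (x_i\partial_j - x_j\partial_i)(x_i\partial_j - x_j\partial_i)
    = x_i^2\partial_j^2 + x_j^2\partial_i^2 - 2x_ix_j\partial_i\partial_j - x_i\partial_i - x_j\partial_j,
$$
where the first-order terms $-x_i\partial_i - x_j\partial_j$ come from differentiating the coefficients $x_i$ and $x_j$ (note $\partial_j(x_i) = 0$ for $i\ne j$, while $\partial_i(x_i)=1$, etc.). Summing over all pairs $1\le i<j\le d$ then gives
$$
  \sum_{i<j} D_{i,j}^2 = \sum_{i=1}^d\Big(\sum_{j\ne i} x_j^2\Big)\partial_i^2
    - \sum_{i\ne j} x_ix_j\,\partial_i\partial_j - (d-1)\sum_{i=1}^d x_i\partial_i,
$$
where I have used that each unordered pair contributes $x_i^2\partial_j^2$ once and $x_j^2\partial_i^2$ once, and each index $i$ appears in exactly $d-1$ pairs among the first-order terms. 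Writing $\sum_{j\ne i} x_j^2 = \|x\|^2 - x_i^2$, this becomes
$$
  \sum_{i<j} D_{i,j}^2 = \|x\|^2\Delta - \sum_{i,j=1}^d x_ix_j\,\partial_i\partial_j - (d-1)\sum_{i=1}^d x_i\partial_i.
$$

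Next I would restrict to $\sph$. The cleanest route is to use the corollary below \lemref{Delta=nabla2}: for $f\in C^2(\sph)$ and $F(y) := f(y/\|y\|)$ we have $\Delta_0 f(x) = \Delta F(x)$ on $\sph$. Since $D_{i,j}$ applied on $\sph$ equals $D_{i,j}$ applied to the ambient function by \eqref{Dij-RdSd}, and since the operators $x_i\partial_j - x_j\partial_i$ are exactly the generators of rotations — so they annihilate $\|x\|$ and hence commute with the radial extension — applying the displayed identity to $F$ and evaluating at $x\in\sph$ gives, using $\|x\|=1$, $\Delta F = \Delta_0 f$, $\sum_i x_i\partial_i F = r\,\partial_r F = 0$ (radial derivative of a $0$-homogeneous function vanishes), and $\sum_{i,j} x_ix_j\partial_i\partial_j F = r^2\partial_r^2 F = 0$ likewise. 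Therefore on $\sph$,
$$
  \sum_{i<j} D_{i,j}^2 f = \Delta_0 f - 0 - 0 = \Delta_0 f,
$$
which is \eqref{Beltrami2}.

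Alternatively, and perhaps more in the spirit of the self-contained derivations of \secref{sec:Laplace-Beltrami}, one can verify the identity purely in the local coordinates $(\xi_1,\dots,\xi_{d-1})$: substitute $x_d = \sqrt{1-\xi_1^2-\cdots-\xi_{d-1}^2}$ into the Cartesian formula for $\sum_{i<j}D_{i,j}^2$ derived above, convert the $x_d$-derivatives via the chain rule (exactly as in \eqref{eq:Beltrami-radial-2}), and match the result against \eqref{Beltrami-explicit}. The main obstacle in either approach is purely bookkeeping: in the first approach one must be careful that the first-order terms in $D_{i,j}^2$ are correctly counted (the coefficient $d-1$, not $d$ or $2$), and in the second approach the chain-rule substitution for the $d$-th coordinate produces several cross terms that must be collected and simplified using $\xi_1^2+\cdots+\xi_d^2 = 1$. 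I would favor the first approach, since the reduction $\|x\|^2\Delta - \sum x_ix_j\partial_i\partial_j - (d-1)\sum x_i\partial_i$ restricted to the sphere is transparent once one invokes the radial/angular splitting \eqref{Beltrami-radial} and the homogeneity observations; no tedious trigonometric identities are needed.
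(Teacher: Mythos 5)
Your proposal is correct and takes essentially the same route as the paper: expand $\sum_{i<j}D_{i,j}^2$ in Cartesian coordinates to obtain $\|x\|^2\Delta-\sum_{i,j}x_ix_j\partial_i\partial_j-(d-1)\sum_i x_i\partial_i$, apply this to the degree-zero homogeneous extension $F(y)=f(y/\|y\|)$, and restrict to $\sph$. Your observation that the correction terms vanish by homogeneity ($\sum_i x_i\partial_i F=0$ and $\sum_{i,j}x_ix_j\partial_i\partial_j F=0$) combined with $\Delta_0 f=\Delta F$ from \eqref{Beltrami} is just a tidy way of performing the final comparison that the paper phrases as ``comparing with \eqref{Beltrami-explicit}.''
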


\begin{proof}
Let $F (x) = f \( \frac{x}{\|x\|}\)$. A straightforward computation shows that
\begin{align*}
 \sum_{1\le i<j \le d} D_{i,j}^2 F(x) =   (\Delta F )(x) - \frac{1}{\|x\|^2}
 \sum_{i=1}^d \sum_{j=1}^d x_i x_j \frac{ \partial^2 F}{\partial x_i \partial x_j}
   - \frac{d - 1}{\|x\|} \sum_{i=1}^d \frac{ \partial F} {\partial x_i}.
\end{align*}
Consequently, restricting to $\sph$ and comparing with \eqref{Beltrami-explicit}, \eqref{Beltrami2}
follows.
\end{proof}

Let $Q_{i,j,\t}$ denote a rotation by the angle $\t$ in the $(x_i , x_j)$-plane, oriented so that $(x_i, x_j)
= (s \cos \t, s \sin \t)$. 
Then $T (Q_{i,j,\t})$, defined in \eqref{left_rep}, maps $f$ into $T (Q_{i,j,\t})f (x)
= f (Q_{i,j,-\t} x)$. Written explicitly, for example, for $(i,j) = (1,2)$, we have 
\begin{equation} \label{eq:TQij}
T (Q_{1,2,\t} f )(x) = f (x_1 \cos \t + x_2 \sin \t,
    - x_1 \sin \t + x_2 \cos \t, x_3 , \ldots, x_d).
\end{equation}
Then $D_{i,j}$ is the infinitesimal operator of $T(Q)$,
\begin{equation} \label{partial_ij}
\frac{dT (Q_{i,j,\t})}{d \t} \Big \vert_{\t =0} =
 x_i \frac{\partial}{\partial x_j}   - x_j \frac{\partial}{\partial x_i}  = D_{i,j},
\end{equation}
where the first equality follows from \eqref{eq:TQij}.
The infinitesimal operator plays an important role in representation theory; see,
for example, \cite{Vile}.

The operators $D_{i,j}$ will play an important role for approximation theory on the sphere. We
state  several more properties of these operators.

\begin{lem} \label{lem:Dij-Hnd}
For $1 \le i < j \le d$, the operators $D_{i,j}$ commute with $\Delta_0$. In
particular, $D_{i,j}$ maps $\CH_n^d$ to itself.
\end{lem}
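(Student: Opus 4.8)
The plan is to reduce the commutation statement to a computation on polynomials on $\RR^d$, where $\Delta_0$ has the explicit form coming from Lemma~\ref{Delta=nabla2} and the first displayed identity in the proof of Theorem~\ref{lem: Laplace-Beltrami2}. Writing $F(x)=f(x/\|x\|)$ for $f\in C^2(\sph)$, we have $\Delta_0 f = \Delta F$ on $\sph$, and likewise $D_{i,j}f = D_{i,j}F$ on $\sph$ by \eqref{Dij-RdSd}. So it suffices to prove that the first-order operator $D_{i,j}=x_i\partial_j - x_j\partial_i$, acting on functions on $\RR^d$, commutes with the ambient Laplacian $\Delta=\partial_1^2+\cdots+\partial_d^2$; restricting that identity to $\sph$ then gives $D_{i,j}\Delta_0 = \Delta_0 D_{i,j}$ on $C^2(\sph)$, provided one checks that $D_{i,j}F$ is again of the form $(\text{something})(x/\|x\|)$, i.e.\ that $D_{i,j}$ preserves $0$-homogeneity — which it does, since $D_{i,j}$ annihilates $\|x\|$ (indeed $D_{i,j}\|x\|^2 = 2x_ix_j - 2x_jx_i = 0$).

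The core computation is the commutator $[\Delta, D_{i,j}]$. Using $\partial_k(x_i\partial_j-x_j\partial_i) = \delta_{k,i}\partial_j - \delta_{k,j}\partial_i + (x_i\partial_j-x_j\partial_i)\partial_k$, one gets, after applying $\partial_k$ once more and summing over $k$, that $\Delta D_{i,j} = D_{i,j}\Delta + 2\partial_i\partial_j - 2\partial_j\partial_i = D_{i,j}\Delta$, since mixed partials commute. This is the standard fact that the angular momentum operators commute with the Laplacian; it is a two-line calculation and I would not belabor it. An alternative, cleaner route is to invoke \eqref{partial_ij}: $D_{i,j}$ is the infinitesimal generator of the rotation flow $Q_{i,j,\t}$, and $\Delta$ (hence $\Delta_0$, by Theorem~\ref{cor:O(d)-basis} and \eqref{Beltrami}) is $O(d)$-invariant, so $\Delta_0 T(Q_{i,j,\t}) = T(Q_{i,j,\t})\Delta_0$; differentiating at $\t=0$ yields $\Delta_0 D_{i,j} = D_{i,j}\Delta_0$. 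Either argument works; the direct commutator computation is the more self-contained one.

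For the second assertion, let $Y\in\CH_n^d$. By Theorem~\ref{thm:eigen-eqn-sph}, $\Delta_0 Y = -n(n+d-2)Y$ on $\sph$. Applying $D_{i,j}$ and using the commutation just proved, $\Delta_0(D_{i,j}Y) = D_{i,j}(\Delta_0 Y) = -n(n+d-2)D_{i,j}Y$, so $D_{i,j}Y$ is again an eigenfunction of $\Delta_0$ with the eigenvalue associated to degree $n$. Since $D_{i,j}$ maps polynomials to polynomials and preserves homogeneity of degree $n$ on $\RR^d$ (each term $x_i\partial_j$ lowers degree by one in one slot and raises it by one in another), $D_{i,j}Y\in\P_n^d$; combined with $\Delta_0(D_{i,j}Y) = -n(n+d-2)D_{i,j}Y$ and the decomposition $\Pi_n(\sph)=\bigoplus_{k\le n}\CH_k^d$ (Corollary~\ref{cor:dimPi-sph}) on which $\Delta_0$ acts with distinct eigenvalues $-k(k+d-2)$, we conclude $D_{i,j}Y\in\CH_n^d$.

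The main obstacle is a bookkeeping one rather than a conceptual one: one must be careful about whether $D_{i,j}$ is being applied to a function on $\RR^d$ or on $\sph$, and make sure the reduction via $F(x)=f(x/\|x\|)$ is legitimate — specifically, that $D_{i,j}$ commutes with the radial-extension operation, which hinges on $D_{i,j}\|x\| = 0$. Once that is in place, everything else is the short commutator identity $[\Delta,D_{i,j}]=0$ plus the eigenvalue argument, both routine.
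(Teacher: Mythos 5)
Your proof is correct, but it takes a genuinely different route from the one in the book. The book's proof stays intrinsic to the sphere: it computes the commutators $[\partial_j,D_{k,l}]$ and $[x_i,D_{k,l}]$, derives from them the $so(d)$-type relations \eqref{eq:Dijcommuter} for $[D_{i,j},D_{k,l}]$, and then verifies term by term that $D_{1,2}$ commutes with each block $D_{1,l}^2+D_{2,l}^2$ and with $D_{k,l}^2$ for $k,l\ge 3$, summing via the decomposition $\Delta_0=\sum_{i<j}D_{i,j}^2$ of Theorem~\ref{lem: Laplace-Beltrami2}. You instead lift everything to $\RR^d$ through the $0$-homogeneous extension $F(x)=f(x/\|x\|)$, use $\Delta_0 f=\Delta F$ from \eqref{Beltrami}, and reduce to the two-line fact $[\Delta,D_{i,j}]=0$ for the ambient Laplacian (or, equivalently, to the $O(d)$-invariance of $\Delta$ differentiated along the rotation flow via \eqref{partial_ij}). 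Your reduction is legitimate: the one point to watch is that $\Delta F$ is homogeneous of degree $-2$ rather than $0$, so it is not itself the radial extension of $\Delta_0 f$; but since $D_{i,j}$ is tangential to spheres, $(D_{i,j}G)\vert_{\sph}$ depends only on $G\vert_{\sph}$, which disposes of the issue — and you correctly identified $D_{i,j}\|x\|^2=0$ as the crux. What each approach buys: yours is shorter and more conceptual, resting on the familiar fact that angular momentum operators commute with the Euclidean Laplacian; the book's pays the price of a longer computation but produces the commutation relations \eqref{eq:Dijcommuter} as a by-product and never leaves the sphere. For the second assertion you argue via eigenvalues of $\Delta_0$ and the direct sum $\bigoplus_k\CH_k^d$, which works; note that your own setup gives an even quicker path: $D_{i,j}$ preserves $\P_n^d$ and commutes with $\Delta$ on $\RR^d$, so $\Delta(D_{i,j}Y)=D_{i,j}\Delta Y=0$ directly.
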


\begin{proof}
By symmetry, we only to show only that $D_{1,2}$. Let $[A,B]= A B - BA$ denote the commutator
of $A$ and $B$. A quick computation shows that
$$
   [\partial_j, D_{k,l}] = \delta_{j,k} \partial_l- \delta_{j,l}\partial_k, \quad
   [x_i, D_{k,l}] =  \delta_{i,k} x_l - \delta_{i,l} x_k,
$$
from which it is easy to see that, for  $1 \le i < j \le d$, $1 \le k < l \le d$, we have 
\begin{equation} \label{eq:Dijcommuter}
   [D_{i,j}, D_{k,l}]  = - \delta_{i,k} D_{j, l}+\delta_{i,l} D_{j, k} +\delta_{j,k} D_{i, l} - \delta_{j,l} D_{i, k}.
\end{equation}
Using \eqref{eq:Dijcommuter}, a simple computation shows that $[D_{1,2}, D_{1,l}^2]  =
- (D_{1,l}D_{2,l}+ D_{2,l}D_{1,l})$ and $[D_{1,2}, D_{2,l}^2] = D_{1,l}D_{2,l}+ D_{2,l}D_{1,l}$,
so that $[D_{1,2}, D_{1,l}^2 + D_{2,l}^2] =0$ for $l \ge 2$. Moreover, by \eqref{eq:Dijcommuter},
$[D_{1,2}, D_{k,l}^2] =0$ whenever $3\le k < l \le d$. Summing over $(k,l)$ for
$1 \le k < l \le d$ then proves $[D_{1,2},\Delta_0]=0$.
\end{proof}

\begin{prop} \label{lem:parts-Dij}
For $f, g \in C^1(\sph)$ and $1\leq i\neq j\leq d$,
\begin{equation} \label{eq:parts-Dij}
   \int_{\sph} f(x) D_{i,j} g(x) d\s(x) =  -  \int_{\sph} D_{i,j} f (x)g(x)d\s(x).
\end{equation}
\end{prop}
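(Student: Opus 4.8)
The plan is to reduce \eqref{eq:parts-Dij} to the single ``no boundary term'' identity $\int_{\sph} D_{i,j}h\,d\s = 0$ valid for every $h\in C^1(\sph)$, and then to prove that identity from the rotation invariance of $d\s$. First I would note that $D_{i,j} = x_i\partial_j - x_j\partial_i = \partial/\partial\t_{i,j}$ is a first-order differential operator and hence a derivation, so $D_{i,j}(fg) = f\,D_{i,j}g + (D_{i,j}f)\,g$ for $f,g\in C^1(\sph)$. Granting the displayed identity and applying it with $h = fg$, expansion by this Leibniz rule gives
\begin{equation*}
  \int_{\sph}\bigl(f\,D_{i,j}g + (D_{i,j}f)\,g\bigr)\,d\s = 0,
\end{equation*}
which is precisely \eqref{eq:parts-Dij}. (If one prefers to work with functions on $\RR^d$, one simply replaces $f,g$ by their degree-zero homogeneous extensions and invokes \eqref{Dij-RdSd}; the Leibniz rule then holds on $\RR^d\setminus\{0\}$ and restricts to $\sph$.)

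To establish $\int_{\sph} D_{i,j}h\,d\s = 0$, I would use that $D_{i,j}$ is the infinitesimal generator of the one-parameter rotation group $\{Q_{i,j,\t}\}$, as recorded in \eqref{partial_ij}. Set
\begin{equation*}
  \Phi(\t) := \int_{\sph} T(Q_{i,j,\t})h(x)\,d\s(x) = \int_{\sph} h(Q_{i,j,-\t}x)\,d\s(x),
\end{equation*}
with $T$ as in \eqref{left_rep}. The substitution $x\mapsto Q_{i,j,\t}x$ together with the $O(d)$-invariance of $d\s$ shows $\Phi(\t)\equiv\int_{\sph}h\,d\s$, so $\Phi$ is constant. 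On the other hand, since $h\in C^1(\sph)$ and $\sph$ is compact, the integrand $h(Q_{i,j,-\t}x)$ is $C^1$ in $\t$ with $\t$-derivative jointly continuous in $(\t,x)$, so differentiation under the integral sign is legitimate, and by \eqref{partial_ij} the $\t$-derivative of the integrand at $\t=0$ equals $\pm D_{i,j}h(x)$, the sign being immaterial since $\Phi$ is constant. Hence $0 = \Phi'(0) = \int_{\sph} D_{i,j}h(x)\,d\s(x)$, as wanted.

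The main obstacle---really the only nontrivial point---is justifying the interchange of differentiation and integration for a merely $C^1$ datum; this is handled by the compactness of $\sph$ (uniform convergence of the difference quotients of $h\circ Q_{i,j,-\t}$). As an alternative that avoids the representation-theoretic language, one could disintegrate $d\s$ along the circular orbits of $\{Q_{i,j,\t}\}$: on the full-measure set $\{r_{i,j}>0\}$ one has $d\s = d\t_{i,j}\,d\nu$ for a suitable measure $\nu$ on the orbit space, and then $\int_{\sph} f\,D_{i,j}g\,d\s = \int\bigl(\int_0^{2\pi} f\,\partial_{\t_{i,j}}g\,d\t_{i,j}\bigr)d\nu$; integration by parts in the periodic variable $\t_{i,j}$ produces no boundary term, giving \eqref{eq:parts-Dij} directly. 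I expect the invariance argument to be the cleaner of the two and would present that one.
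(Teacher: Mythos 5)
Your proof is correct and rests on the same mechanism as the paper's: rotation invariance of $d\s$ under the one-parameter group $Q_{i,j,\t}$ combined with differentiation at $\t=0$ via \eqref{partial_ij}. The only cosmetic difference is that you factor the argument through the Leibniz rule and the lemma $\int_{\sph} D_{i,j}h\,d\s=0$, whereas the paper differentiates the invariance identity $\int_{\sph} f(x)g(Q_{i,j,-\t}x)\,d\s(x)=\int_{\sph} f(Q_{i,j,\t}x)g(x)\,d\s(x)$ directly; these are the same computation packaged differently.
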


\begin{proof}
By the rotation invariance of the Lebesgue measure $d\s$, we obtain, for
every $\t\in [-\pi,\pi]$,
$$
  \int_{\sph} f(x) g (Q_{i,j,-\t} x)\, d\s(x)=\int_{\sph}f(Q_{i,j,\t} x) g(x)\, d\s(x).
$$
Differentiating  both sides of this identity with respect to $\t$ and evaluating the
resulting equation at $\t=0$ leads to, by \eqref{partial_ij}, the desired equation \eqref{eq:parts-Dij}.
\end{proof}

The equation \eqref{eq:parts-Dij} allows us to define distributional derivatives
$D_{i,j}^r$ on $\sph$ for $r\in\NN$ via the identity, wtih $g\in C^\infty(\sph)$,
\begin{equation} \label{eq:distributiveDij}
   \int_{\sph} D_{i,j}^r f(x) g(x) d\s(x) =  (-1)^r  \int_{\sph}  f(x)D_{i,j}^r
   g(x)d\s(x).
\end{equation}

Summing \eqref{eq:distributiveDij} with $r =2$ over $i < j$ and applying \eqref{Beltrami2}, it follows immediately
that $\Delta_0$ is self-adjoint, which can also be deduced from Theorem \ref{thm:eigen-eqn-sph}.

\begin{cor}
For $f, g  \in C^2(\sph)$,
$$
    \int_{\sph} f(x) \Delta_0 g(x) d\s =  \int_{\sph} \Delta_0 f(x) g(x) d\s.
$$
\end{cor}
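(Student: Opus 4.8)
The plan is to derive the identity directly from the decomposition $\Delta_0 = \sum_{1 \le i < j \le d} D_{i,j}^2$ furnished by Theorem~\ref{lem: Laplace-Beltrami2}, combined with the first-order integration-by-parts formula \eqref{eq:parts-Dij} of Proposition~\ref{lem:parts-Dij}, applied twice for each index pair. Fix $1 \le i < j \le d$. Since $g \in C^2(\sph)$, the function $D_{i,j} g$ lies in $C^1(\sph)$, so reading $D_{i,j}^2 g = D_{i,j}(D_{i,j} g)$ and applying \eqref{eq:parts-Dij} with $D_{i,j} g$ in the second argument gives
$$
  \int_{\sph} f(x)\, D_{i,j}^2 g(x)\, d\s(x) = -\int_{\sph} (D_{i,j} f)(x)\,(D_{i,j} g)(x)\, d\s(x).
$$
Because $f \in C^2(\sph)$, the function $D_{i,j} f$ lies in $C^1(\sph)$, so applying \eqref{eq:parts-Dij} once more, now with $D_{i,j} f$ playing the role of the first argument and $g$ that of the second, and reading $D_{i,j}(D_{i,j} f) = D_{i,j}^2 f$, yields
$$
  -\int_{\sph} (D_{i,j} f)(x)\,(D_{i,j} g)(x)\, d\s(x) = \int_{\sph} (D_{i,j}^2 f)(x)\, g(x)\, d\s(x).
$$
Hence $\int_{\sph} f\, D_{i,j}^2 g\, d\s = \int_{\sph} (D_{i,j}^2 f)\, g\, d\s$ for each pair $i<j$.

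Next I would simply sum these $\binom{d}{2}$ identities; since each side is a finite sum, Theorem~\ref{lem: Laplace-Beltrami2} lets me replace $\sum_{i<j} D_{i,j}^2$ by $\Delta_0$ on the left and on the right simultaneously, which gives $\int_{\sph} f\, \Delta_0 g\, d\s = \int_{\sph} \Delta_0 f\, g\, d\s$, as claimed. It is worth recording the common symmetric intermediate quantity $-\sum_{1 \le i<j \le d} \int_{\sph} (D_{i,j} f)(D_{i,j} g)\, d\s$, which is the Dirichlet form attached to $-\Delta_0$ and makes the symmetry in $f$ and $g$ manifest.

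There is essentially no serious obstacle here; the one point requiring care is the regularity bookkeeping. Proposition~\ref{lem:parts-Dij} applies only to $C^1$ arguments, so to invoke it twice one genuinely needs $f, g \in C^2(\sph)$, with $D_{i,j} g$ and $D_{i,j} f$ supplying the $C^1$ intermediates for the first and second applications respectively — which is exactly what the hypothesis provides. An alternative route, already hinted at in the text, goes through Theorem~\ref{thm:eigen-eqn-sph}: expanding $f$ and $g$ in an orthonormal basis of spherical harmonics formally makes both integrals equal to $\sum_{n} -n(n+d-2)\la \proj_n f, \proj_n g\ra_{\sph}$. However, justifying the termwise action of $\Delta_0$ and the interchange of $\Delta_0$ with the (only $L^2$-convergent, under a mere $C^2$ assumption) Laplace series is more delicate and less self-contained than the argument above, so I would present the $D_{i,j}$ computation as the main proof and mention the eigenfunction argument only as a remark.
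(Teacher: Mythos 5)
Your proof is correct and follows the paper's own route: the text establishes the corollary by summing the case $r=2$ of \eqref{eq:distributiveDij} (i.e.\ two applications of \eqref{eq:parts-Dij}) over $i<j$ and invoking the decomposition \eqref{Beltrami2}, which is exactly your argument, with the same alternative via Theorem~\ref{thm:eigen-eqn-sph} noted as a remark.
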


The spherical gradient $\nabla_0$ is a vector of first order differential operator on the sphere, which
can be written in terms of $D_{i,j}$ as follows.  

\begin{lem}
For $f\in C^1(\sph)$ and $1 \le j \le d$, the $j$-th component of $\nabla_0f$ satisfies
\begin{equation}\label{nabla0-Dij}
       (\nabla_0)_j f(\xi) = \sum_{1 \le i \le d, i \ne j} \xi_i D_{i,j} f(\xi), \quad \xi \in \sph.
\end{equation}
Furthermore, for $f,g \in C^1(\sph)$, the following identity holds:
\begin{equation}\label{nabla0-Dij-sum}
  \nabla_0 f(\xi) \cdot \nabla_0 g(\xi)  = \sum_{1 \le i < j \le d} D_{i,j} f(\xi) D_{i,j} g(\xi), \quad \xi \in \sph.
\end{equation}
\end{lem}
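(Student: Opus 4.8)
The plan is to reduce both identities to elementary computations by passing to the degree-$0$ radial extension used in the corollary containing \eqref{Beltrami}. Set $F(y):=f(y/\|y\|)$ and $G(y):=g(y/\|y\|)$ for $y\in\RR^d\setminus\{0\}$. By that corollary, $(\nabla_0)_jf(\xi)=\partial_jF(\xi)$ and $\nabla_0f(\xi)\cdot\nabla_0g(\xi)=\nabla F(\xi)\cdot\nabla G(\xi)$ for $\xi\in\sph$; and since $D_{i,j}=x_i\partial_j-x_j\partial_i$ is tangential to $\sph$, \eqref{Dij-RdSd} gives $D_{i,j}f(\xi)=D_{i,j}F(\xi)=\xi_i\partial_jF(\xi)-\xi_j\partial_iF(\xi)$, with the analogous formula for $g$ and $G$. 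The single structural input is that $F$ and $G$ are homogeneous of degree $0$, so Euler's identity yields $\sum_{i=1}^d\xi_i\partial_iF(\xi)=0$ for every $\xi\ne0$; together with $\sum_{i=1}^d\xi_i^2=1$ on $\sph$, this is all that is needed.

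For \eqref{nabla0-Dij}, at a fixed $\xi\in\sph$ I would expand
$$
  \sum_{1\le i\le d,\ i\ne j}\xi_iD_{i,j}f(\xi)
   =\Bigl(\sum_{i\ne j}\xi_i^2\Bigr)\partial_jF(\xi)-\xi_j\sum_{i\ne j}\xi_i\partial_iF(\xi).
$$
The first coefficient equals $1-\xi_j^2$, while Euler's identity gives $\sum_{i\ne j}\xi_i\partial_iF(\xi)=-\xi_j\partial_jF(\xi)$, so the right-hand side collapses to $(1-\xi_j^2)\partial_jF(\xi)+\xi_j^2\partial_jF(\xi)=\partial_jF(\xi)=(\nabla_0)_jf(\xi)$.

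For \eqref{nabla0-Dij-sum}, at a fixed $\xi\in\sph$ I would write
$$
  \sum_{1\le i<j\le d}D_{i,j}f(\xi)\,D_{i,j}g(\xi)
   =\tfrac12\sum_{i\ne j}\bigl(\xi_i\partial_jF(\xi)-\xi_j\partial_iF(\xi)\bigr)\bigl(\xi_i\partial_jG(\xi)-\xi_j\partial_iG(\xi)\bigr),
$$
expand the four products, and sum over $i\ne j$. Using $\sum_i\xi_i^2=1$, the two ``diagonal'' families of terms each sum to $\sum_k(1-\xi_k^2)\partial_kF(\xi)\partial_kG(\xi)$; the two ``mixed'' families, after discarding the vanishing factor $\bigl(\sum_i\xi_i\partial_iF(\xi)\bigr)\bigl(\sum_k\xi_k\partial_kG(\xi)\bigr)$ supplied by Euler's identity, together sum to $2\sum_k\xi_k^2\partial_kF(\xi)\partial_kG(\xi)$. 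Halving the total leaves $\sum_k\partial_kF(\xi)\partial_kG(\xi)=\nabla_0f(\xi)\cdot\nabla_0g(\xi)$, which is the claim.

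A slicker route to \eqref{nabla0-Dij-sum}, worth recording, is polarization: each $D_{i,j}$ and the Euclidean gradient obey the Leibniz rule, so applying $\sum_{i<j}D_{i,j}^2$ to the product $FG$ and invoking $\Delta_0=\sum_{i<j}D_{i,j}^2$ (Theorem \ref{lem: Laplace-Beltrami2}), $\Delta_0f=\Delta F$, and the product rule $\Delta(FG)=G\,\Delta F+F\,\Delta G+2\,\nabla F\cdot\nabla G$ forces $\nabla_0f\cdot\nabla_0g=\sum_{i<j}D_{i,j}f\,D_{i,j}g$ on $\sph$; for $f,g$ only in $C^1(\sph)$ one closes the gap by approximation in the $C^1$ norm, which is legitimate since only first-order derivatives appear on either side. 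There is no serious obstacle in this lemma. The only points demanding care are that $\nabla_0$ and the $D_{i,j}$ may be evaluated through the degree-$0$ extension $F$, and the disciplined bookkeeping of the cross terms in \eqref{nabla0-Dij-sum}; in every step it is Euler's identity $\sum_i\xi_i\partial_iF(\xi)=0$, valid precisely because $F$ is homogeneous of degree $0$, that annihilates the unwanted terms.
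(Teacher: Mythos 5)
Your proof is correct and follows essentially the same route as the paper: both arguments pass to the degree-zero homogeneous extension $F(y)=f(y/\|y\|)$, identify $(\nabla_0)_jf$ with $\partial_jF$ on $\sph$, and verify the two identities by direct expansion using $\sum_i\xi_i^2=1$ together with the vanishing of the radial derivative ($\xi\cdot\nabla F=0$, i.e.\ Euler's identity for degree $0$). The only differences are organizational—you run the first computation from right to left and expand all four cross terms in the second, where the paper shortens the latter via $\nabla_0f\cdot\nabla_0g=\nabla_0f\cdot\nabla g$—plus your optional polarization remark, none of which changes the substance.
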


\begin{proof}
Let $F(y) = f(y/\|y\|)$. From the definition and $\|\xi\| =1$, we obtain
\begin{align} \label{nabla0-Dij2}
 (\nabla_0)_j f(\xi) & = \frac{\partial}{\partial x_j} F (\xi) = \partial_j f - \xi_j \sum_{i=1}^d \xi_i \partial_i f  \\
     & = \partial_j f \sum_{i=1}^d \xi_i^2 - \xi_j \sum_{i=1}^d \xi_i \partial_i f = \sum_{i=1}^d \xi_i D_{i,j} f, \notag
\end{align}
which gives \eqref{nabla0-Dij} since $D_{i,i} f = 0$. \eqref{nabla0-Dij2} means that
\begin{equation} \label{eq:nbala0-nabla}
   \nabla_0 f(\xi) =\nabla f(\xi)-\xi ( \xi\cdot \nabla f).
\end{equation}

Since $\xi\cdot \nabla_0 f(\xi) =0$, using \eqref{nabla0-Dij2} and the definition of
$D_{i,j}$, it follows  that
\begin{align*}
    \nabla_0 f \cdot \nabla_0 g= \nabla_0 f \cdot \nabla g  =  \sum_{j=1}^d \partial_j f \partial_j g - \sum_{j =1}^d \sum_{i =1}^d \xi_i \xi_j \partial_i f \partial_j g
        =\sum_{i<j } D_{i,j} f(\xi) D_{i,j} g(\xi),
\end{align*}
where the second equality uses $\|\xi\| =1$.
\end{proof}

As an application, we state an integration by parts formula on the sphere.

\begin{prop} \label{prop:parts-sphere}
For $f, g \in C^1(\sph)$,
\begin{equation} \label{eq:parts-sphere}
    \int_{\sph} f(x) \nabla_0 g(x) d\s = - \int_{\sph} \left(\nabla_0 f(x) - (d-1)x f(x) \right) g(x) d\s.
\end{equation}
Furthermore, for $f \in C^2(\sph)$ and $g \in C^1(\sph)$,
\begin{equation} \label{eq:parts-Delta0-nabla0}
   \int_{\sph} \nabla_0 f \cdot \nabla_0 g d\s = -  \int_{\sph} \Delta_0 f(x) g(x) d\s.
\end{equation}
\end{prop}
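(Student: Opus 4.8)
The plan is to reduce both identities to the angular derivatives $D_{i,j}$, using the single-operator integration-by-parts formula \eqref{eq:parts-Dij} of Proposition \ref{lem:parts-Dij} together with the representations \eqref{nabla0-Dij} and \eqref{nabla0-Dij-sum} of $\nabla_0$ and $\nabla_0\cdot\nabla_0$ in terms of the $D_{i,j}$. No new machinery is needed; the content is organizing the bookkeeping so the $(d-1)x$ correction term appears correctly.

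For \eqref{eq:parts-sphere} I would fix $j$ and write the $j$-th component of $\nabla_0 g$ via \eqref{nabla0-Dij} as $\sum_{i\neq j}\xi_i D_{i,j}g$, so that $\int_{\sph} f\,(\nabla_0)_j g\,d\s=\sum_{i\neq j}\int_{\sph}(\xi_i f)\,D_{i,j}g\,d\s$. Applying \eqref{eq:parts-Dij} to each summand, with $\xi_i f$ playing the role of the undifferentiated factor, and using that $D_{i,j}$ is a first-order operator so that $D_{i,j}(\xi_i f)=(D_{i,j}\xi_i)f+\xi_i D_{i,j}f$, everything comes down to the elementary identity $D_{i,j}x_i=-x_j$ for $i\neq j$, hence $D_{i,j}\xi_i=-\xi_j$ on $\sph$. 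Summing over the $d-1$ values $i\neq j$ then produces the term $(d-1)\int_{\sph}\xi_j fg\,d\s$ together with $-\int_{\sph}\bigl(\sum_{i\neq j}\xi_i D_{i,j}f\bigr)g\,d\s$, and the latter is exactly $-\int_{\sph}(\nabla_0)_j f\,g\,d\s$ by \eqref{nabla0-Dij} again. Collecting the $d$ components yields \eqref{eq:parts-sphere}.

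For \eqref{eq:parts-Delta0-nabla0} I would start from $\nabla_0 f\cdot\nabla_0 g=\sum_{i<j}D_{i,j}f\,D_{i,j}g$ in \eqref{nabla0-Dij-sum}, and apply \eqref{eq:parts-Dij} to each term, treating $D_{i,j}f$ as the undifferentiated factor and $D_{i,j}g$ as $D_{i,j}$ applied to $g$, so that one derivative moves onto $f$: $\int_{\sph}(D_{i,j}f)(D_{i,j}g)\,d\s=-\int_{\sph}(D_{i,j}^2 f)\,g\,d\s$. Here the hypothesis $f\in C^2$ is used, so that $D_{i,j}f\in C^1$ and \eqref{eq:parts-Dij} is applicable. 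Summing over $1\le i<j\le d$ and invoking the decomposition $\Delta_0=\sum_{i<j}D_{i,j}^2$ of Theorem \ref{lem: Laplace-Beltrami2} gives \eqref{eq:parts-Delta0-nabla0}. (One could instead derive \eqref{eq:parts-Delta0-nabla0} from \eqref{eq:parts-sphere} applied componentwise, using $\xi\cdot\nabla_0=0$ and $\Delta_0=\nabla_0\cdot\nabla_0$, but the $D_{i,j}$ route avoids tracking the correction term.)

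The only genuinely delicate point is the product-rule step in the first part: the term $D_{i,j}\xi_i=-\xi_j$ is precisely what generates the $(d-1)x$ correction, and dropping it would yield the false flat-space analogue of integration by parts. A secondary matter is the regularity accounting, $C^1$ for \eqref{eq:parts-sphere} versus $C^2$ for \eqref{eq:parts-Delta0-nabla0}, so that a second $D_{i,j}$ may be integrated by parts legitimately.
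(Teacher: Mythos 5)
Your proposal is correct and follows essentially the same route as the paper: both parts reduce to the single-operator formula \eqref{eq:parts-Dij} via the representations \eqref{nabla0-Dij} and \eqref{nabla0-Dij-sum}, with the product-rule identity $D_{i,j}(x_i f)=x_i D_{i,j}f-x_j f$ producing the $(d-1)x$ correction and \eqref{Beltrami2} finishing the second identity. Your bookkeeping of the $-\xi_j$ term and of the $C^1$ versus $C^2$ hypotheses matches the paper's argument exactly.
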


\begin{proof}
Using the expression \eqref{nabla0-Dij2} and applying \eqref{eq:parts-Dij}, we obtain
\begin{align*}
 \int_{\sph} f(x) (\nabla_0)_j g(x) d\s & = \sum_{1 \le i \le d, i \ne j} \int_{\sph} x_i f(x) D_{i,j} g(x) d\s \\
    & = - \sum_{1 \le i \le d, i \ne j} \int_{\sph} D_{i,j} ( x_i f(x)) g(x) d\s.
\end{align*}
By the chain rule, recalling \eqref{Dij-RdSd} if necessary, we have $D_{i,j}  ( x_i f(x)) = x_i D_{i,j} f - x_j f(x)$.
Hence we obtain
\begin{align*}
 \int_{\sph} f(x) (\nabla_0)_j g(x) d\s & = -  \int_{\sph} \biggl( \sum_{1 \le i \le d, i \ne j}x_i D_{i,j} f(x)  - (d-1) f(x) \biggr) g(x) d\s,
\end{align*}
which gives the $j$-th component of \eqref{eq:parts-sphere}. Equation \eqref{eq:parts-Delta0-nabla0}
follows  immediately from \eqref{nabla0-Dij-sum}, \eqref{eq:parts-Dij} and \eqref{Beltrami2}.
\end{proof}

\section{Notes}
\setcounter{equation}{0}

Spherical harmonics appear in many disciplines and in many different branches of mathematics.
Many books contain parts of the theory of spherical harmonics. Our treatment covers 
what is needed for harmonic analysis and approximation theory in this book. Below we
comment on some books that we have consulted.

A classical treatise on spherical harmonics is \cite{Hob}, a good source for classical results.
A short but nice expository is \cite{Muller66}, which was later expanded into \cite{Muller}.
The reference book \cite{HTF} contains a chapter on spherical harmonics. A rich resource for
spherical harmonics in Fourier analysis is \cite{SW}. Applications to and connections with
group representation are studied extensively in \cite{Vile}; see also \cite{Helg}. For their
role in the context of orthogonal polynomials of several variables, see \cite{DX} as well as
\cite{HTF}. The book \cite{AAR} contains a chapter on spherical harmonics in light of special
functions. The theory of harmonic functions is treated in \cite{ABR}, including material on
spherical harmonics. The book \cite{Gro} deals with spherical harmonics in geometric applications.
Finally, the recent book \cite{AtH} provides an introduction to spherical harmonics and
approximation on the sphere from the perspective of applications in numerical analysis.

Aside from their role in representation theory, the operators $D_{i,j}$ do not seem to have received much
attention in analysis. Most of the materials in Section \ref{sec:Beltrami-Dij} have not previously
appeared in books. These operators play an important role in our development of approximation
theory on the sphere.

\bibliographystyle{amsalpha}

\newpage

\noindent
\begin{center}
{\huge Approximation Theory and Harmonic Analysis\\ on Spheres and Balls} 
\end{center}
\bigskip

\centerline {\Large F. Dai and Y. Xu}
\medskip

\centerline {\Large Springer Monographs in Mathematics,  Springer,  New York 2013.}

\bigskip

\bigskip

\centerline{\Large Table of Contents}
 
\contentsline {chapter}{Preamble}{xvii}
\contentsline {chapter}{\numberline {1}Spherical Harmonics}{1}
\contentsline {section}{\numberline {1.1}Space of spherical harmonics and orthogonal bases}{1}
\contentsline {section}{\numberline {1.2}Projection operators and Zonal harmonics}{7}
\contentsline {section}{\numberline {1.3}Zonal basis of spherical harmonics}{11}
\contentsline {section}{\numberline {1.4}Laplace-Beltrami operator}{14}
\contentsline {section}{\numberline {1.5}Spherical harmonics in spherical coordinates}{17}
\contentsline {section}{\numberline {1.6}Spherical harmonics in two and three variables}{19}
\contentsline {subsection}{\numberline {1.6.1}Spherical harmonics in two variables}{19}
\contentsline {subsection}{\numberline {1.6.2}Spherical harmonics in three variables}{20}
\contentsline {section}{\numberline {1.7}Representation of the rotation group}{22}
\contentsline {section}{\numberline {1.8}Angular derivatives and Laplace-Beltrami operator}{23}
\contentsline {section}{\numberline {1.9}Notes}{27}
\contentsline {chapter}{\numberline {2}Convolution and Spherical Harmonic Expansion}{29}
\contentsline {section}{\numberline {2.1}Convolution and translation operators}{29}
\contentsline {section}{\numberline {2.2}Fourier orthogonal expansions}{33}
\contentsline {section}{\numberline {2.3}The Hardy-Littlewood Maximal function}{37}
\contentsline {section}{\numberline {2.4}Spherical harmonic series and Ces\`aro means}{40}
\contentsline {section}{\numberline {2.5}Convergence of Ces\`aro means: further results}{43}
\contentsline {section}{\numberline {2.6}highly localized kernels}{44}
\contentsline {section}{\numberline {2.7}Notes and further results}{50}
\contentsline {chapter}{\numberline {3}Littlewood-Paley Theory and Multiplier Theorem}{53}
\contentsline {section}{\numberline {3.1}Analysis on homogeneous spaces}{53}
\contentsline {section}{\numberline {3.2}The Littlewood-Paley theory on the sphere}{56}
\contentsline {section}{\numberline {3.3}The Marcinkiewitcz multiplier theorem}{64}
\contentsline {section}{\numberline {3.4}The Littlewood-Paley inequality}{67}
\contentsline {section}{\numberline {3.5}The Riesz transform on the sphere}{71}
\contentsline {subsection}{\numberline {3.5.1}Fractional Laplace-Beltrami operator and Riesz transform}{71}
\contentsline {subsection}{\numberline {3.5.2}Proof of Lemma 3.5.2\hbox {}}{74}
\contentsline {section}{\numberline {3.6}Notes and further results}{78}
\contentsline {chapter}{\numberline {4}Approximation on the Sphere}{79}
\contentsline {section}{\numberline {4.1}Approximation by trigonometric polynomials}{80}
\contentsline {section}{\numberline {4.2}Modulus of smoothness on the unit sphere}{85}
\contentsline {section}{\numberline {4.3}A key lemma}{89}
\contentsline {section}{\numberline {4.4}Characterization of best approximation}{93}
\contentsline {section}{\numberline {4.5}$K$-functionals and Approximation in Sobolev space}{95}
\contentsline {section}{\numberline {4.6}Computational examples}{97}
\contentsline {section}{\numberline {4.7}Other moduli of smoothness}{99}
\contentsline {section}{\numberline {4.8}Notes and further results}{101}
\contentsline {chapter}{\numberline {5}Weighted Polynomial Inequalities}{105}
\contentsline {section}{\numberline {5.1}Doubling weights on the sphere}{105}
\contentsline {section}{\numberline {5.2}A maximal function for spherical polynomials}{111}
\contentsline {section}{\numberline {5.3}Marcinkiewicz-Zygmund inequalities}{114}
\contentsline {section}{\numberline {5.4}Further inequalities between sums and integrals}{118}
\contentsline {section}{\numberline {5.5}Nikolskii and Bernstein inequalities}{124}
\contentsline {section}{\numberline {5.6}Notes and further results}{126}
\contentsline {chapter}{\numberline {6}Cubature Formulas on Spheres}{127}
\contentsline {section}{\numberline {6.1}Cubature formulas}{127}
\contentsline {section}{\numberline {6.2}Product type cubature formulas on the sphere}{132}
\contentsline {section}{\numberline {6.3}Positive Cubature formulas}{135}
\contentsline {section}{\numberline {6.4}Area-regular partitions of $\mathbb {S}^{d-1}$}{140}
\contentsline {section}{\numberline {6.5} Spherical designs}{144}
\contentsline {section}{\numberline {6.6}Notes and further results}{151}
\contentsline {chapter}{\numberline {7}Harmonic Analysis Associated to Reflection Groups}{155}
\contentsline {section}{\numberline {7.1}Dunkl operators and $h$-spherical harmonics}{156}
\contentsline {section}{\numberline {7.2}Projection operator and intertwining operator}{160}
\contentsline {section}{\numberline {7.3}$h$-harmonics for a general finite reflection group}{166}
\contentsline {section}{\numberline {7.4}Convolution and $h$-harmonic series}{168}
\contentsline {section}{\numberline {7.5}Maximal function and multiplier theorem}{172}
\contentsline {section}{\numberline {7.6}Maximal function for ${\mathbb Z}_2^d$ invariant weight}{179}
\contentsline {section}{\numberline {7.7}Notes and further results}{186}
\contentsline {chapter}{\numberline {8}Boundedness of Projection Operator and Ces\`aro Means}{189}
\contentsline {section}{\numberline {8.1}Boundedness of Ces\`aro means above the critical index}{189}
\contentsline {section}{\numberline {8.2}A multiple Beta integral of the Jacobi polynomials}{191}
\contentsline {section}{\numberline {8.3}Pointwise estimate of the kernel functions}{198}
\contentsline {section}{\numberline {8.4}Proof of the main results}{200}
\contentsline {section}{\numberline {8.5}Lower bound for generalized Gegenbauer expansion}{204}
\contentsline {section}{\numberline {8.6}Notes and further results}{211}
\contentsline {chapter}{\numberline {9}Projection Operators and Ces\`aro Means in $L^p$ Spaces}{213}
\contentsline {section}{\numberline {9.1}Boundedness of projection operators}{213}
\contentsline {section}{\numberline {9.2}Boundedness of Ces\`aro means in $L^p$ spaces}{217}
\contentsline {subsection}{\numberline {9.2.1}Proof of Theorem 9.2.1\hbox {}}{218}
\contentsline {subsection}{\numberline {9.2.2}Proof of Theorem 9.2.2\hbox {}}{222}
\contentsline {section}{\numberline {9.3}Local estimates of the projection operators}{225}
\contentsline {subsection}{\numberline {9.3.1}Proof of Theorem 9.1.2\hbox {}, case I: ${\gamma }< {\sigma }_{\kappa }-\frac {d-2}{2}$}{226}
\contentsline {subsection}{\numberline {9.3.2}Proof of Theorem 9.1.2\hbox {}, case 2: ${\gamma }= {\sigma }_{\kappa }-\frac {d-2}{2}$}{234}
\contentsline {section}{\numberline {9.4}Notes and further results}{239}
\contentsline {chapter}{\numberline {10}Weighted Best Approximation by Polynomials}{241}
\contentsline {section}{\numberline {10.1} Moduli of smoothness and best approximation}{241}
\contentsline {section}{\numberline {10.2} Fractional powers of spherical $h$-Laplacian }{245}
\contentsline {section}{\numberline {10.3} $K$-functionals and best approximatioin}{248}
\contentsline {section}{\numberline {10.4} Equivalence of the first modulus with $K$-functional}{250}
\contentsline {section}{\numberline {10.5} Equivalence of the second modulus with $K$-functional}{254}
\contentsline {section}{\numberline {10.6} Further properties of moduli of smoothness}{261}
\contentsline {section}{\numberline {10.7} Notes and further results}{262}
\contentsline {chapter}{\numberline {11}Harmonic Analysis on the Unit Ball}{265}
\contentsline {section}{\numberline {11.1} Orthogonal structure on the unit ball}{265}
\contentsline {section}{\numberline {11.2} Convolution and orthogonal expansions}{272}
\contentsline {section}{\numberline {11.3} Maximal functions and multiplier theorem}{276}
\contentsline {section}{\numberline {11.4} Projection operators and Ce\`saro means on the ball}{280}
\contentsline {section}{\numberline {11.5} Near best approximation operators and highly localized kernels}{283}
\contentsline {section}{\numberline {11.6} Cubature formulas on the unit ball}{286}
\contentsline {subsection}{\numberline {11.6.1}Cubature formulas on the ball and on the sphere}{287}
\contentsline {subsection}{\numberline {11.6.2}Positive cubature formulas and MZ inequality}{288}
\contentsline {subsection}{\numberline {11.6.3}Product type cubature formulas}{290}
\contentsline {section}{\numberline {11.7} Orthogonal structure on spheres and on balls}{292}
\contentsline {section}{\numberline {11.8} Notes and further results}{294}
\contentsline {chapter}{\numberline {12}Polynomial Approximation on the Unit Ball}{297}
\contentsline {section}{\numberline {12.1} Algebraic polynomial approximation on the interval}{298}
\contentsline {section}{\numberline {12.2} First modulus of smoothness and $K$-functional}{303}
\contentsline {subsection}{\numberline {12.2.1}Projection from sphere to ball}{303}
\contentsline {subsection}{\numberline {12.2.2}Modulus of smoothness}{305}
\contentsline {subsection}{\numberline {12.2.3}Weighted $K$-functional and the equivalence}{307}
\contentsline {subsection}{\numberline {12.2.4}Main theorems}{309}
\contentsline {subsection}{\numberline {12.2.5}The moduli of smoothness on $[-1,1]$}{311}
\contentsline {subsection}{\numberline {12.2.6}Computational examples}{312}
\contentsline {section}{\numberline {12.3} The second modulus of smoothness and $K$-functional}{313}
\contentsline {subsection}{\numberline {12.3.1}Analogue of the Ditzian-Totik $K$-functional}{314}
\contentsline {subsection}{\numberline {12.3.2}Direct and inverse theorems by $K$-functional}{319}
\contentsline {subsection}{\numberline {12.3.3}Analogue of Ditzian-Totik modulus of smoothness on ${\mathbb B}^d$}{320}
\contentsline {subsection}{\numberline {12.3.4}Equivalence of $ {\omega }_\varphi ^r(f,t)_p$ and $ K_{r,\varphi }(f,t)_p$}{323}
\contentsline {section}{\numberline {12.4} Third modulus of smoothness and $K$-functional}{326}
\contentsline {section}{\numberline {12.5} Comparisons of three moduli of smoothness}{328}
\contentsline {section}{\numberline {12.6} Notes and further results}{329}
\contentsline {chapter}{\numberline {13}Harmonic Analysis on the Simplex}{333}
\contentsline {section}{\numberline {13.1} Orthogonal structure on the simplex}{333}
\contentsline {section}{\numberline {13.2} Convolution and orthogonal expansions}{337}
\contentsline {section}{\numberline {13.3} Maximal functions and multiplier theorem}{340}
\contentsline {section}{\numberline {13.4} Projection operator and Ces\`aro means}{344}
\contentsline {section}{\numberline {13.5} Near best approximation operators and highly localized kernels}{351}
\contentsline {section}{\numberline {13.6} Weighted best approximation on the simplex}{354}
\contentsline {section}{\numberline {13.7} Cubature formulas on the simplex}{356}
\contentsline {subsection}{\numberline {13.7.1}Cubature formulas on the simplex and on the ball}{357}
\contentsline {subsection}{\numberline {13.7.2}Positive cubature formulas and MZ inequality}{357}
\contentsline {subsection}{\numberline {13.7.3}Product type cubature formulas}{359}
\contentsline {section}{\numberline {13.8} Notes and further results}{360}
\contentsline {chapter}{\numberline {14}Applications}{361}
\contentsline {section}{\numberline {14.1} Highly localized tight polynomial frames on the sphere}{362}
\contentsline {section}{\numberline {14.2} Nodes distribution of positive cubature formulas}{369}
\contentsline {section}{\numberline {14.3} Positive definite functions on the sphere}{373}
\contentsline {section}{\numberline {14.4} Asymptotics for minimal discrete energy on the sphere}{381}
\contentsline {section}{\numberline {14.5} Computerized tomography}{388}
\contentsline {section}{\numberline {14.6} Notes and further results}{396}
\contentsline {chapter}{\numberline {A}Distance, Difference and Integral Formulas}{401}
\contentsline {section}{\numberline {A.1}Distance on spheres, balls and simplexes}{401}
\contentsline {section}{\numberline {A.2}Euler angles and rotations}{403}
\contentsline {section}{\numberline {A.3}Basic properties of difference operators }{404}
\contentsline {section}{\numberline {A.4}Ces\`aro means and Difference operators}{406}
\contentsline {section}{\numberline {A.5}Integrals over spheres and balls}{409}
\contentsline {chapter}{\numberline {B}Jacobi and Related Orthogonal Polynomials}{413}
\contentsline {section}{\numberline {B.1}Jacobi polynomials}{413}
\contentsline {section}{\numberline {B.2}Gegenbauer polynomials}{416}
\contentsline {section}{\numberline {B.3}Generalized Gegenbauer polynomials}{418}
\contentsline {section}{\numberline {B.4}Associated Legendre polynomials}{419}
\contentsline {section}{\numberline {B.5}Estimates of normalized Jacobi polynomials}{419}
\contentsline {chapter}{References}{425}
\contentsline {chapter}{Index}{433}
\contentsline {chapter}{Symbol Index}{437}

\vskip 1in

\end{document}